\def\R{\mathbb{R}}
\def\Z{\mathbb{Z}}
\def\T{\mathbb{T}}
\def\P{\mathbb{P}}
\def\d{\mathrm{d}}
\def\F{\mathcal{F}}
\newcommand{\fe}{\mathrm{e}}
\newcommand{\bT}{{\mathbb T}}
\numberwithin{equation}{section}
\theoremstyle{plain}
\newtheorem{theorem}{Theorem}[section]
\newtheorem{lemma}[theorem]{Lemma}
\newtheorem{proposition}[theorem]{Proposition}
\newtheorem{corollary}[theorem]{Corollary}
\newtheorem{remark}{Remark}[section]
\numberwithin{equation}{section}
\numberwithin{table}{section}
\numberwithin{figure}{section}
\titleformat{\section}{\vskip10pt\large\bfseries}{\thesection.}{0.5em}{\centering\vspace{5pt}}
\titleformat{\subsection}{\vskip10pt\normalsize\bfseries}{\thesubsection.}{0.5em}{}
\titleformat{\subsubsection}{\vskip10pt\normalsize\bfseries}{\thesubsection.}{0.5em}{}
\begin{document}
\title[]{An unfiltered low-regularity integrator \\
for the KdV equation with solutions below ${\bf H^1}$}

\author[]{Buyang Li\,\,}
\address{\hspace*{-12pt}Buyang Li: 
Department of Applied Mathematics, The Hong Kong Polytechnic University,
Hung Hom, Hong Kong. {\it Email address: \tt buyang.li@polyu.edu.hk}}

\author[]{\,\,Yifei Wu}
\address{\hspace*{-12pt}Y.~Wu: Center for Applied Mathematics, Tianjin University, 300072, Tianjin, China.
\newline 
{\it Email address: \tt yifei@tju.edu.cn}}

\date{}

\dedicatory{}

\begin{abstract}

This article is concerned with the construction and analysis of new time discretizations for the KdV equation on a torus for low-regularity solutions below $H^1$. New harmonic analysis tools, including new averaging approximations to the exponential phase functions, new frequency decomposition techniques, and new trilinear estimates of the KdV operator, are established for the construction and analysis of time discretizations with higher convergence orders under low-regularity conditions. In addition, new techniques are introduced to establish stability estimates of time discretizations under low-regularity conditions without using filters when the energy techniques fail. The proposed method is proved to be convergent with order $\gamma$ (up to a logarithmic factor) in $L^2$ under the regularity condition $u\in C([0,T];H^\gamma)$ for $\gamma\in(0,1]$.\\[10pt]  
 {\bf Keywords:} KdV equation, rough data, low-regularity integrator, stability and error estimates \\[10pt]
{\bf AMS Subject Classification:} 35Q53, 35B99, 65M12, 65M15. 
\end{abstract}

\maketitle

\tableofcontents

\section{Introduction}

The Korteweg--De Vries (KdV) equation is a nonlinear dispersive partial differential equation that describes many physical phenomena, including shallow water waves, ion acoustic waves in plasmas, acoustic waves on crystal lattices, and so on. The development of computational methods for the KdV equation has attracted much attention.

It is known that the KdV equation, either on a torus or on the whole space, is globally well-posed in $H^s$ with $s\ge 0$, i.e., there exists a unique solution in $C ([0,T];H^s)$ for any initial value in $H^s$; see \cite{BaLlTi-2011-CPAM, KaTo, KiVi-2019,Zhou-1997-IMRN}. However, classical time discretizations for the KdV equation, including finite difference methods, splitting methods, discontinuous Galerkin methods, and classical exponential integrators, generally require much higher regularity for the numerical solutions to converge with certain orders, i.e., these methods typically require $u\in C([0,T];H^{3})$ and $u\in C([0,T];H^{6})$ to have first- and second-order convergence in $L^2$, respectively. The error estimates under these regularity conditions (or stronger conditions) for the above-mentioned classical time discretizations have been established, for example, in \cite{splittingJCP,Spectralkdv1,Fourierkdv,Spectralkdv2,splitting0,splitting1,splitting2,DG2,Ostermann-Su-2020}. Such regularity conditions are not mere technical conditions required in the error analysis. When the solution of the KdV equation does not have the required regularity, its numerical approximations by the classical time discretizations generally have reduced order of convergence.

In practice, the solutions of the KdV equation may be rough due to measurement or randomness of the data \cite{BD2009,Gubinelli-2012}. To address the numerical approximation to nonsmooth solutions, some low-regularity exponential integrators based on resonance analysis were recently developed to relax the regularity requirement in solving nonlinear dispersive equations. Such low-regularity integrators based on resonance analysis were initially introduced by Hofmanov\'{a} \& Schratz \cite{Hofmanova-Schratz-2017} and Ostermann \& Schratz \cite{Ostermann-Schratz-FoCM,Ostermann-Schratz-FoCM3} for solving the KdV equation and the nonlinear Schr\"odinger (NLS) equation, respectively. In particular, for the KdV equation, the low-regularity integrator proposed in \cite{Hofmanova-Schratz-2017} can have first-order convergence in $H^{1}$ for $u\in C([0,T];H^{3})$. Wu \& Zhao \cite{Wu-Zhao-IMA} showed that another method outlined in \cite{Hofmanova-Schratz-2017} can have second-order convergence in $H^\gamma$ for $u\in C([0,T];H^{\gamma+4})$ with $\gamma\ge 0$. 
In a more recent article, Wu \& Zhao \cite{WuZhao-BIT} proposed two embedded low-regularity integrators for the KdV equation, which have first-order convergence in $H^\gamma$ for $u\in C([0,T];H^{\gamma+1})$ with $\gamma>\frac12$ and second-order convergence $H^\gamma$ for $u\in C([0,T];H^{\gamma+3})$ with $\gamma\ge0$, respectively. The minimal regularity requirement for the convergence analysis of these unfiltered algorithms for the KdV equation is $u\in C([0,T];H^\gamma)$ for $\gamma> 3/2$. This condition naturally arises in the energy type of stability analyses.

The convergence of a fully discrete finite difference method was proved in \cite{CLR-2020} for $u\in C([0,T];H^\gamma)$ with $\gamma \ge 3/4$ under the CFL condition $\Delta t\le \Delta x^3$, where $\Delta t$ and $\Delta x$ denote the stepsize and mesh size in the temporal and spatial discretizations, respectively. The CFL condition in a finite difference method plays a similar role as the filters in a spectral method, i.e., to improve the stability of the method under low-regularity conditions.  In the case of $\gamma=3/4$, the method is proved convergent with order $1/42$. Since the convergence analysis relies on the smoothing effect on $\R$, the proof cannot be extended to the torus $\T$. 

Similarly, the development of low-regularity integrators for the NLS equations can be found in \cite{Knoller-Ostermann-Schratz-2019,Wu-Yao-2021,Li-Wu-2021,Ostermann-Yao-2022}. The minimal regularity requirement for the convergence analysis of the unfiltered algorithms for the NLS equation is $u\in C([0,T];H^\gamma)$ for $\gamma> d/2$, where $d$ is the dimension of space. This condition also arises in the energy stability analyses, which require using the Kato--Ponce inequality $\|fg\|_{H^\gamma}\lesssim \|f\|_{H^\gamma}\|g\|_{H^\gamma}$ with $\gamma>\frac d2$. The question of whether any convergence rates can be achieved for rough solutions $u\in C([0,T];H^\gamma)$, with an arbitrary small $\gamma>0$, remained open for a long time for both the KdV equation and the NLS equation. 

The convergence of numerical solutions for rough solutions in $C([0,T];H^\gamma)$ with an arbitrary small $\gamma>0$ was addressed by Ostermann, Rousset \& Schratz \cite{ORS-JEMS} and Rousset \& Schratz \cite{RS-PAM-2022} for the NLS and KdV equations, respectively, by introducing and utilizing the discrete Bourgain spaces. In particular, for the KdV equation, Rousset \& Schratz \cite{RS-PAM-2022} proposed three filtered time discretizations for the KdV equation on the torus $\T$, including a filtered exponential integrator, a filtered Lie splitting method, and a filtered version of the resonance based scheme, and proved the convergence of order $\frac\gamma3$ for the three methods under the regularity condition $u\in  C([0,T];H^\gamma)$ with $\gamma\in(0,3]$. 
The convergence analysis in \cite{RS-PAM-2022} is based on the combination of filters in the algorithms and the discrete Bourgain spaces in the analysis. Since the filters in these algorithms truncate the numerical solutions to frequencies below $\tau^{-\frac13}$, and such frequency-truncated functions approximate the original functions in $H^\gamma$ with an error bound of $O(\tau^{\frac{\gamma}{3}})$, it follows that the convergence of such filtered algorithms is limited to order $\frac{\gamma}{3}$ for the KdV equation under the regularity condition $u\in C([0,T];H^\gamma)$. 

This article is concerned with the construction and analysis of new time discretizations for the KdV equation on a torus, 
\begin{equation}\label{model}
 \left\{\begin{aligned}
& \partial_tu(t,x)+\partial_x^3u(t,x)
 =\frac{1}{2}\partial_x(u(t,x)^2),
&&  x\in\bT =[0,2\pi] \,\,\,\mbox{and}\,\,\, t\in(0,T],\\
 &u(0,x)=u^0(x),&& x\in\bT,
 \end{aligned}\right.
\end{equation}
for low-regularity solutions below $H^1$, i.e., the initial value $u^0$ is in $H^\gamma$ with $\gamma\in(0,1]$ and therefore the solution $u$ is in $C([0,T];H^\gamma)$ with $\gamma\in(0,1]$. 
One of the main difficulties in the construction and analysis of low-regularity integrators for nonlinear dispersive equations is the approximation of exponential functions with imaginary powers, say $\fe^{is\phi}$, based on a certain decomposition of the phase function $\phi=\phi_1+\phi_2$. The approximation of such  exponential functions with imaginary powers were typically based on the following techniques: 
\begin{align}\label{exp-approx-1}
e^{is\phi}=e^{is\phi_1}+O(s|\phi_2|) \quad \mbox{or } \quad \fe^{is\phi}=e^{is\phi_1}+e^{is\phi_2}-1+O(\min\{s|\phi_1|,s|\phi_2|\}) , 
\end{align}
see \cite{Knoller-Ostermann-Schratz-2019,Hofmanova-Schratz-2017,WuZhao-BIT,Wu-Yao-2021,Li-Wu-2021,Ostermann-Yao-2022} and the references therein. The remainders in these types of approximations are still too large to obtain error estimates for rough solutions $u\in C([0,T];H^\gamma)$ with $\gamma\in(0,1]$. In this article we introduce a new averaging approximation technique: 
\begin{align}\label{exp-approx-2}
M_\tau\big(\fe^{is(\phi_1+\phi_2)}\big)=M_\tau\big(\fe^{is\phi_1}\big)M_\tau\big(\fe^{is\phi_2}\big)+O\left(\min\left\{\left|\frac{\phi_1}{\phi_2}\right|,\left|\frac{\phi_2}{\phi_1}\right|,s|\phi_1|,s|\phi_2|\right\}\right),
\end{align}
where $M_\tau(f)$ denotes the average of a function $f$ in the interval $[0,\tau]$; see Lemma \ref{lem:average2}. The remainder in this approximation is smaller than the remainders in \eqref{exp-approx-1}. In particular, the additional upper bounds $|\phi_1/\phi_2|$ and $|\phi_2/\phi_1|$ for the remainder are important for us to obtain error estimates in the rough case by using harmonic analysis techniques. 
 
Moreover, it is known that the combination of filters and discrete Bourgain spaces in \cite{RS-PAM-2022} has played an important role in establishing the stability of numerical approximations to rough solutions. In this article, we develop new techniques which can be used to establish stability estimates under such low-regularity conditions when the energy techniques fail and filters are not used. More specifically, instead of using energy techniques locally in time, we define a temporally continuous function $\mathscr{V}(t)$ which equals the numerical solution $v^n$ at the discrete time levels $t_n$, $n=0,1,\dots,L$, and satisfies an integral formulation of the continuous KdV equation globally in time up to a perturbation term, i.e.,
$$
\mathscr V(t)
=
  v^0 +\frac12 \int_{0}^t \fe^{s\partial_x^3}\partial_x\left(\fe^{-s\partial_x^3}\mathscr V(s)\right)^2\,\d s 
  + \mathcal R(t) \quad\mbox{for}\,\,\, t\in (0,T] ,
$$
where the perturbation term $\mathcal{R}(t)$ can be defined piecewisely on each subinterval $(t_n,t_{n+1}]$ according to the definition of the time discretization on this subinterval. 
The specific form of the perturbation term for the low-regularity integrator constructed in this article is given in \eqref{def-R(t)}. 
In the absence of the perturbation term $ \mathcal R(t)$, the solution of the integral equation above coincides with the solution to the KdV equation. The continuous formulation of the numerical scheme allows us to apply low- and high-frequency decomposition in estimating the stability with respect to the perturbation, which can significantly weaken the regularity conditions compared with the energy approach of stability estimates used in the literature. 

In addition, we establish some new harmonic analysis tools, including new frequency decomposition techniques (Lemma \ref{lem:a4-est}) and new trilinear estimates of the KdV operator (Proposition \ref{lem:tri-linear}), which can be used to construct and analyze low-regularity integrators without using filters and therefore significantly improves the convergence order to three times of the order with filters. More specifically, by using the averaging approximation technique, the stability analysis based on the continuous formulation of the numerical scheme, and the new harmonic analysis tools established in this article, we prove that the proposed method is convergent with order $\gamma$ (up to a logarithmic factor) under the regularity condition $u\in C([0,T];H^\gamma)$ for $\gamma\in(0,1]$. 

For the convenience of readers, we present the numerical scheme and the main theoretical result below. 
Let $t_n=n\tau$, $n=0,1,\dots,L=T/\tau$, be a partition of the time interval $[0,T]$ with stepsize $\tau=T/L$. The low-regularity integrator constructed in this article for the KdV equation \eqref{model} reads: For given $u^n\in H^\gamma$, find $u^{n+1}\in H^\gamma$ by    
\begin{align}\label{numer-formula-u}
u^{n+1}=&\fe^{-\tau\partial_x^3}u^n+F[u^n] +H[u^n] \quad \mbox{for}\,\,\, n=0,1\ldots,L-1 ,  
 \end {align}
where 
\begin{align*}
F[u^n]=&\,\frac16 \P\big[\big(\fe^{-\tau\partial_x^3}\partial_x^{-1}u^n\big)^2\big]
-\frac16 \fe^{-\tau\partial_x^3} \P\big[\big(\partial_x^{-1}u^n\big)^2\big] , \\
H[u^n]
=&\,\frac13  \P \Big[ \big(\fe^{-\tau\partial_x^3}\partial_x^{-1}u^n\big)\, \partial_x^{-1} F[u^n]\Big]
\notag \\
&\, +  \frac{\tau}{9} \fe^{-\tau\partial_x^3} (\partial_x^{-1} u^n) \, \P_{0}[ (u^n)^2 ]  \\
&\,
- \frac{1}{54}\fe^{(s-\tau)\partial_x^3}\partial_x^{-1}\big[ (\fe^{-s\partial_x^3}\partial_x^{-1}u^n)^3 \big] \bigg|_{s=0}^{s=\tau} \\
&\,
-\frac1{27\tau}  \fe^{(s-\tau)\partial_x^3}\partial_x^{-2}\Big[ (\fe^{-(s-\tau)\partial_x^3}\partial_x^{-2}F[u^n]) \,
(\fe^{-s\partial_x^3}\partial_x^{-1}u^n)\Big]\Big|_{s=0}^{s=\tau}.
\end{align*}

 The convergence of the numerical solution to the solution of the KdV equation is guaranteed by the following theorem.

\begin{theorem}\label{main:thm1}
Let $\gamma\in (0,1]$ and $u\in C([0,T];H^{\gamma})$ with initial value satisfying $\int_\T u^0\,dx=0$. Then there exist positive constants $\tau_0\in(0,\frac12]$ and $C$ such that for $\tau\in(0,\tau_0]$ the numerical solution given by \eqref{numer-formula-u} has the following error bound:
\begin{equation}\label{est:error}
\max_{1\le n\le L}  \|u(t_n,\cdot)-u^{n}\|_{L^2}
  \le C \tau^\gamma\ln (1/\tau) ,
\end{equation} 
where the constants $\tau_0$ and $C$ depend only on $\|u^0\|_{H^\gamma}$, $\gamma$ and $T$. 
\end{theorem}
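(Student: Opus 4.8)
The plan is to work in the interaction picture. Setting $v(t)=\fe^{t\partial_x^3}u(t)$ and $v^n=\fe^{t_n\partial_x^3}u^n$, the exact solution satisfies the Duhamel identity
$$v(t_{n+1}) = v(t_n) + \tfrac12\int_{t_n}^{t_{n+1}}\fe^{s\partial_x^3}\partial_x\big(\fe^{-s\partial_x^3}v(s)\big)^2\,\d s,$$
while multiplying \eqref{numer-formula-u} by $\fe^{t_{n+1}\partial_x^3}$ turns the scheme into an increment map $v^{n+1}=v^n+\fe^{t_{n+1}\partial_x^3}\big(F[u^n]+H[u^n]\big)$. Since $\|u(t_n)-u^n\|_{L^2}=\|v(t_n)-v^n\|_{L^2}$, it suffices to control the error in the $v$-variable. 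I would split the work into a consistency part (how well one step reproduces the exact flow) and a stability part (how the accumulated defect propagates), but the stability part must be handled through the continuous interpolant $\mathscr V$ rather than a step-by-step Lipschitz/energy argument, which fails below $H^{3/2}$. The hypothesis $\int_\T u^0\,\d x=0$ is used throughout to make the operators $\partial_x^{-1}$ appearing in $F$ and $H$ well defined, and is preserved in time.

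For consistency I would iterate the Duhamel identity on $[t_n,t_{n+1}]$. Freezing $v(s)\approx v(t_n)$ in the first iterate gives an oscillatory integral whose averaging approximation via \eqref{exp-approx-2} and Lemma~\ref{lem:average2} reproduces exactly the term $F[u^n]$; the second Picard iterate together with its leading correction, the zero-mode contribution $\P_0[(u^n)^2]$, and the resulting cubic terms reproduce $H[u^n]$. The reason for using the averaging approximation \eqref{exp-approx-2} rather than the cruder \eqref{exp-approx-1} is that its remainder carries the additional factors $|\phi_1/\phi_2|$ and $|\phi_2/\phi_1|$; after the frequency decomposition of Lemma~\ref{lem:a4-est} and the trilinear estimates of Proposition~\ref{lem:tri-linear}, these can be summed over dyadic frequency blocks up to $\sim\tau^{-1/3}$. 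This summation produces the single logarithmic factor and bounds the local defect by $O(\tau^{1+\gamma}\ln(1/\tau))$ in $L^2$ per subinterval, uniformly for $u\in C([0,T];H^\gamma)$.

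For stability I would introduce the temporally continuous interpolant $\mathscr V(t)$ with $\mathscr V(t_n)=v^n$ satisfying the perturbed integral equation with $\mathcal R(t)$ as in \eqref{def-R(t)}; by construction $\mathcal R$ is assembled from the same averaged integrals as the scheme, so the consistency estimate of the previous paragraph controls its size, with the total perturbation over $[0,T]$ of order $\tau^{\gamma}\ln(1/\tau)$. Taking $v^0=u^0$ so that the data cancel, the difference $w=\mathscr V-v$ solves
$$w(t)=\tfrac12\int_0^t \fe^{s\partial_x^3}\partial_x\Big[\big(\fe^{-s\partial_x^3}\mathscr V\big)^2-\big(\fe^{-s\partial_x^3}v\big)^2\Big]\,\d s+\mathcal R(t),$$
whose nonlinearity factors as $(a+b)(a-b)$ and is thus bilinear in $w$ and $\mathscr V+v$. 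I would estimate $w$ in a discrete Bourgain-type norm by decomposing into low and high frequencies: the low-frequency part is controlled directly, while the high-frequency part exploits the dispersive smoothing encoded in Proposition~\ref{lem:tri-linear}, yielding $\|w\|\lesssim\|\mathcal R\|$ over the whole interval without any filter. I expect this to be the main obstacle: on the torus there is no global smoothing, so closing the bilinear estimate for the high-frequency interactions of $w$—having the trilinear bound absorb the derivative $\partial_x$ in the nonlinearity while keeping the frequency summation only logarithmically divergent—is the crux of the whole argument.

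Finally, I would combine the two parts through a bootstrap/continuation argument. Assuming inductively that $\max_{k\le n}\|v(t_k)-v^k\|_{L^2}\le C\tau^\gamma\ln(1/\tau)$ keeps $\mathscr V$ inside a fixed ball on which the trilinear estimate applies, the stability inequality $\|w\|\lesssim\|\mathcal R\|$ together with the consistency bound $\|\mathcal R\|\lesssim\tau^\gamma\ln(1/\tau)$ closes the induction for all $\tau\le\tau_0$, which gives \eqref{est:error}. The threshold $\tau_0$ and the constant $C$ depend only on $\|u^0\|_{H^\gamma}$, $\gamma$ and $T$, through the radius of the ball in which the trilinear estimates remain valid.
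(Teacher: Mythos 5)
Your proposal follows essentially the same route as the paper: the twisted variable $v=\fe^{t\partial_x^3}u$, the iterated Duhamel expansion with the averaging approximation producing $F$ and $H$, the temporally continuous interpolant $\mathscr V$ satisfying a globally posed perturbed integral equation with remainder $\mathcal R$, the low/high-frequency decomposition combined with the trilinear KdV estimates of Proposition \ref{lem:tri-linear} for stability, and a final bootstrap over subintervals of fixed length. Two small mischaracterizations worth noting: the logarithmic factor arises not from a dyadic frequency summation up to $\tau^{-1/3}$ but from Lemma \ref{lem:ln-loss} with the choice $\theta\sim 1/\ln(1/\tau)$, and the stability is closed in $L^\infty_tL^2_x$ together with control of $\partial_t e$ in $H^{-\frac{23}{14}}$ rather than in any discrete Bourgain-type norm --- avoiding Bourgain spaces and filters is precisely what distinguishes this argument from \cite{RS-PAM-2022}.
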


\begin{remark}\label{Remark-THM}
\upshape 
Without loss of generality, we can assume that $\int_\T u^0\,dx=0$, i.e., $\P_0u^0=0$ and $\P u^0=u^0$, where 
$$
\P_0u^0=\frac{1}{2\pi}\int_{\T}u^0\d x \quad\mbox{and}\quad \P u^0=u^0-\P_0u^0
$$ 
are the zero-mode and nonzero-mode projections of $u^0$, respectively. Otherwise we can consider the function 
$$
\tilde u(t,x) :=  u(t,x- t\,\P_0u^0) -\P_0u^0 ,
$$
which satisfies the KdV equation in \eqref{model} with initial value $\tilde u^0= \P u^0$, which satisfies $\int_\T \tilde u^0\d x=0$. 

\end{remark}

The rest of this article is organized as follows. Some basic notations and preliminary results are presented in Section \ref{sec:proof}. Several new tools for the construction and analysis of low-regularity integrators are presented in Section \ref{section:tools}, including a logarithmically growing trilinear estimate on $L^2$, the averaging approximation of exponential functions with imaginary powers, and new trilinear estimates associated to the KdV operator. The construction of the low-regularity integrator is presented in Section \ref{sec:numer-method}, and the reduction of the proposed numerical scheme to a continuously formulated perturbed KdV equation is presented in Section \ref{section:reduction}. 
The consistency estimates for the local and global errors are presented in Sections \ref{section:local-error} and \ref{section:global-error}, respectively. The stability estimates using low- and high-frequency decompositions are presented in Section \ref{sec:regularity}. The error estimates (i.e., proof of Theorem \ref{main:thm1}), which combine the consistency and stability estimates, are presented in Section \ref{sec:globalstability}. 
Numerical experiments and conclusions are presented in Sections \ref{sec:numerical} and \ref{sec:conclusion}, respectively.

\section{Notations and preliminary results}\label{sec:proof}

In this section we present the basic notations to be used in this article, as well as some preliminary results which were known in the literature and are frequently used in this article. 

\subsection{Baisc notations}\label{subsec1}
For convenience, we adopt the following notations which are widely used in harmonic analysis and partial differential equations: 
\begin{enumerate}

\item[(i)] 
For a function $f(t,x)$ which depends on $t$ and $x$, we simply denote $f(t)=f(t,\cdot)$. 

\item[(ii)]
We denote $\langle k\rangle=(1+|k|^2)^{\frac12}$ for $k\in\Z$. 

\item[(iii)]
We denote by $C$ a generic positive constant which may be different at different occurrences, possibly depending on $\|u\|_{C([0,T];H^\gamma)}$ and $T$, 
but is independent of the stepsize $\tau$ and time level $n$.  

\item[(iv)]
We denote by $A\lesssim B$ or $B\gtrsim A$ the statement ``$A\leq CB$ for some constant $C>0$''.

\item[(v)]
We denote by $A\sim B$ the statement ``$C^{-1}B\le A\leq CB$ for some constant $C>0$''. Namely, $A\sim B$ is equivalently to $A\lesssim B\lesssim A$. 

\item[(vi)] 
We denote by $A\ll B$ or $B\gg A$ the statement $A\le C^{-1}B$ for some sufficiently large constant $C$ (which is independent of $\tau$ and $n$). 

\item[(vii)] 
The notation $a+$ stands for $a+\epsilon$ with an arbitrary small $\epsilon>0$, 
and $a-$ stands for $a-\epsilon$ with an arbitrary small $\epsilon>0$.


\end{enumerate}

With the notations above, we often decompose a subset $E\subset \Z^2=\{(k_1,k_2):k_1,k_2\in\Z\}$ into two parts, i.e., $E=E_1\cup E_2$, with 
$$
E_1=\{(k_1,k_2)\in E: |k_1|\ll |k_2|\}
\quad\mbox{and}\quad
E_2=\{(k_1,k_2)\in E: |k_1|\gtrsim |k_2|\} . 
$$
This means that we consider the decomposition with
$$
E_1=\{(k_1,k_2)\in\Z^2: |k_1|< c|k_2|\}
\quad\mbox{and}\quad
E_2=\{(k_1,k_2)\in\Z^2: |k_1|\ge c |k_2|\} ,
$$
where $c$ is some sufficiently small constant (independent of $\tau$ and $n$) which can satisfy the requirement in our analysis.  

\subsection{Fourier transform}\label{subsec1}
The inner product and norm of $L^2(\T)$ is defined by 
$$
\langle f,g\rangle = \int_\bT f(x)\overline{g(x)}\,\d x
\quad\mbox{and}\quad
\|f\|_{L^2(\bT)}:=\sqrt{\langle f,f\rangle} .
$$
The Fourier transform of a function $f\in L^2(\T)$ is defined by
$$
\mathcal{F}_k[f] = \frac1{2\pi}\displaystyle\int_{\bT}
e^{- i kx}f( x)\,\d x . 
$$
For the simplicity of notation, we also denote $\hat{f}_k=\mathcal{F}_k[f]$ and $f=\mathcal{F}_k^{-1}[\hat f_k]$. The following standard properties of the Fourier transform are well known: 
\begin{align*}
f(x) &=\sum\limits_{k\in\Z}\hat{f}_k \fe^{i kx} && \mbox{(Fourier series expansion)} \\ 
\|f\|_{L^2(\bT)} 
&= \sqrt{2\pi}\Big(\sum\limits_{k\in\Z}|\hat f_k|^2 \Big)^\frac12 && \mbox{(Plancherel's identity)}  \\
\langle f,g\rangle  &  =2\pi \sum\limits_{k\in\Z} \hat f_k \overline{\hat g_k} && \mbox{(Parseval's identity)}   \\
\mathcal{F}_k[fg]  &=\sum\limits_{k_1+k_2=k}\hat f_{k_1}\hat g_{k_2} && 
\mbox{(Conversion of products to convolutions)} 
\end{align*}

The Sobolev space $H^s(\bT)$, with $s\in\R$, consists of generalized functions $f=\sum\limits_{k\in\Z}\hat{f}_k \fe^{i kx} $  such that $\|f\|_{H^s}<\infty$, where 
$$
\|f\|_{H^s} :=  \sqrt{2\pi}\bigg(\sum_{k\in\Z} (1+|k|^2)^{s} |\hat f_k|^2 \bigg)^{\frac12} . 
$$
The operator $J^s=(1-\partial_{x}^2)^\frac s2: H^{s_0}(\T)\rightarrow H^{s_0-s}(\T)$, with $s_0,s\in\R$, is defined as   
$$
J^s f = \sum_{k\in\Z} (1+|k|^2)^\frac s2 \hat f_k \fe^{ikx} \quad\forall\, f\in H^{s_0}(\T) , 
$$
which satisfies that $\|f\|_{H^s(\bT)} = \|J^s f \|_{L^2(\bT)} $.

\subsection{Projection operators}
\label{section:projection}

For any real number $N\ge 0$, we define the Littlewood--Paley projections $\P_{\le N}: L^2(\bT)\rightarrow L^2(\bT)$ and $\P_{> N}: L^2(\bT)\rightarrow L^2(\bT)$ as 
\begin{align*}
\P_{\le N} f := \mathcal{F}^{-1}_k\big( 1_{|k|\le N} \mathcal{F}_k[f] \big) = \sum_{|k|\le N}\hat f_k \fe^{ikx} ,\\
\P_{> N} f := \mathcal{F}^{-1}_k\big( 1_{|k|> N} \mathcal{F}_k[f] \big) = \sum_{|k|> N}\hat f_k \fe^{ikx} . 
\end{align*}
We denote $\P_0=\P_{\le 0}$ and $\P=\P_{>0}$, which are called zero-mode and nonzero-mode projections, respectively, satisfying the following identities:  
$$
\P_{0}f = \frac1{2\pi}\int_\T f\,\d x 
\quad\mbox{and}\quad
\P f(x)=f(x)-\frac1{2\pi}\int_\T f\,\d x . 
$$
The operator $\partial_x^{-1}: L^2(\bT)\rightarrow H^{1}(\bT)$ is defined by 
\begin{equation*} 
\mathcal{F}_k[\partial_x^{-1}f]
=\Bigg\{ \aligned
    &(ik)^{-1}\hat f_k  &&\mbox{for}\,\,\, k\ne 0,\\
    &0 &&\mbox{for}\,\,\, k= 0.
   \endaligned
\end{equation*}
This operator has a natural extension $\partial_x^{-1}: H^s(\bT)\rightarrow H^{s+1}(\bT)$ for all $s\in\R$. Moreover, the following relation holds: 
\begin{align*}
\partial_x^{-1} \partial_x f =  \partial_x \partial_x^{-1} f=\P f . 
\end{align*}

For functions restricted to low frequency or high frequency, the following Bernstein's inequalities hold for any real numbers $s\ge s_0$:
\begin{align}\label{Bernstein}
\begin{aligned}
\|\P_{\le N} f\|_{H^{s}} &\lesssim N^{s-s_0} \|\P_{\le N} f\|_{H^{s_0}} &&\forall\, f\in H^{s_0}(\T) , \\
\|\P_{> N} f\|_{H^{s_0}} &\lesssim N^{s_0-s} \|\P_{> N} f\|_{H^{s}} &&\forall\, f\in H^{s}(\T) . 
\end{aligned}
\end{align}

\subsection{The Kato--Ponce inequality}

The Kato--Ponce inequality will be frequently used in this paper. The result was originally proved in \cite{Kato-Ponce} and then extended to the endpoint case in \cite{BoLi-KatoPonce, Li-KatoPonce} recently.
\begin{lemma}[Kato--Ponce inequality] \label{lem:kato-Ponce} 
For $s>0$, $1<p\le \infty$, $1<p_1,p_3< \infty$ and $1<p_2,p_4 \le \infty$ satisfying $\frac1p=\frac1{p_1}+\frac1{p_2}$ and $\frac1p=\frac1{p_3}+\frac1{p_4}$, the following inequality holds:
\begin{align*}
\big\|J^s(fg)\big\|_{L^p}\le C\big( \|J^sf\|_{L^{p_1}}\|g\|_{L^{p_2}}+ \|J^sg\|_{L^{p_3}}\|f\|_{L^{p_4}}\big),
\end{align*}
where the constant $C>0$ depends on $s,p,p_1,p_2,p_3,p_4$. If $s>\frac1p$ then the following inequality holds: 
\begin{align*}
\big\|J^s(fg)\big\|_{L^p}\le C \|J^sf\|_{L^{p}}\|J^sg\|_{L^{p}},
\end{align*}
where the constant $C>0$ depends on $s$ and $p$.
\end{lemma}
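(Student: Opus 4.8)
The plan is to obtain the inequality from the Bony paraproduct decomposition together with Littlewood--Paley theory, handling the homogeneous operator $|\partial_x|^s$ first and recovering the Bessel potential $J^s=(1-\partial_x^2)^{s/2}$ by a harmless low-frequency correction. The torus setting causes no extra trouble here: $\P_{\le 1}$ projects onto the three modes $k\in\{-1,0,1\}$, a finite-dimensional space on which $J^s$ is bounded in every $L^q$-norm, so $J^s\P_{\le 1}(fg)$ is controlled directly, while on $\P_{>1}(fg)$ the symbol $\langle k\rangle^s$ is comparable to the dyadic weight $2^{js}$ for $|k|\sim 2^j$. I would also record at the outset that the \emph{second} inequality follows from the first whenever $p<\infty$: choosing $p_1=p_3=p$ and $p_2=p_4=\infty$ and invoking the one-dimensional Sobolev embedding $W^{s,p}(\T)\hookrightarrow L^\infty(\T)$, which holds precisely for $s>\tfrac1p$, gives $\|g\|_{L^\infty}\lesssim\|J^sg\|_{L^p}$ and $\|f\|_{L^\infty}\lesssim\|J^sf\|_{L^p}$, whence $\|J^s(fg)\|_{L^p}\lesssim\|J^sf\|_{L^p}\|J^sg\|_{L^p}$. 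Thus everything reduces to the first inequality.

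For the first inequality I would expand $f=\sum_j\Delta_jf$ and $g=\sum_k\Delta_kg$ into Littlewood--Paley blocks ($\Delta_j$ having frequency $\sim 2^j$) and split the product into the three standard paraproducts,
\[
fg=\sum_{j<k-2}\Delta_jf\,\Delta_kg+\sum_{|j-k|\le 2}\Delta_jf\,\Delta_kg+\sum_{k<j-2}\Delta_jf\,\Delta_kg=:\Pi_{\mathrm{lh}}+\Pi_{\mathrm{hh}}+\Pi_{\mathrm{hl}}.
\]
In the low--high piece $\Pi_{\mathrm{lh}}$ each summand is frequency-localized to $|\xi|\sim 2^k$, so $J^s$ acts essentially as multiplication by $2^{ks}$ and may be transferred onto the high-frequency factor $g$; summing the resulting blocks and combining the Littlewood--Paley square-function characterization of $L^p$ with Hölder's inequality for $\tfrac1p=\tfrac1{p_3}+\tfrac1{p_4}$ produces the bound $\|J^sg\|_{L^{p_3}}\|f\|_{L^{p_4}}$. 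The high--low piece $\Pi_{\mathrm{hl}}$ is symmetric and gives $\|J^sf\|_{L^{p_1}}\|g\|_{L^{p_2}}$.

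The high--high piece $\Pi_{\mathrm{hh}}$ is where the hypothesis $s>0$ is genuinely used. Its output frequency ranges over all scales $\lesssim 2^k$, and projecting onto an output block $\Delta_m$ costs a factor $2^{ms}$, so after Hölder one arrives at
\[
\big\|J^s\Delta_m\Pi_{\mathrm{hh}}\big\|_{L^p}\lesssim\sum_{k\gtrsim m}2^{(m-k)s}\big(2^{ks}\|\Delta_kf\|_{L^{p_1}}\big)\,\|\Delta_kg\|_{L^{p_2}}.
\]
The geometric factor $2^{(m-k)s}$ is summable in $k\ge m$ exactly because $s>0$; a Schur/Young estimate in the dyadic indices followed by the square function then bounds $\Pi_{\mathrm{hh}}$ by $\|J^sf\|_{L^{p_1}}\|g\|_{L^{p_2}}$ (or, symmetrically, by the other term), closing the interior range $1<p,p_1,p_2,p_3,p_4<\infty$.

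The main obstacle is the endpoint range admitted by the statement, namely $p_2,p_4=\infty$ (and the case $p=\infty$ of the second inequality): there the square-function characterization of $L^p$ degenerates and the clean argument above no longer applies directly. This is precisely the delicate situation treated in \cite{BoLi-KatoPonce,Li-KatoPonce}, the original result being \cite{Kato-Ponce}. For these endpoints I would recast the remainder $J^s(fg)-(J^sf)\,g-f\,(J^sg)$, after the low-frequency reduction, as a bilinear Fourier multiplier whose symbol satisfies Coifman--Meyer/Hörmander--Mihlin estimates on the relevant frequency cones, and apply the Coifman--Meyer theorem in its $L^{p_1}\times\mathrm{BMO}\to L^p$ endpoint form; following \cite{Li-KatoPonce} one replaces the offending $L^\infty$-norm by a $\mathrm{BMO}$-norm in the intermediate step and returns to $L^\infty$ through the Fefferman--Stein sharp-maximal-function inequality. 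Finally, since the sharpest forms of these estimates are proved on $\R$, transferring them to $\T$ by a de Leeuw-type multiplier transference (or by redoing the periodic Littlewood--Paley calculus directly) yields the stated inequality on the torus.
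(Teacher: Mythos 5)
Your proposal is correct in outline, but it takes a genuinely different route from the paper. The paper does not prove the Kato--Ponce inequality at all: it treats the inequality on $\R$ as a known result, citing \cite{Kato-Ponce} for the original statement and \cite{BoLi-KatoPonce,Li-KatoPonce} for the endpoint cases $p_2,p_4=\infty$, and the only argument it actually supplies (in the remark following the lemma) is the transfer from $\R$ to $\T$, done via Stein's extension operator $E:W^{s,p}(\T)\to W^{s,p}(\R)$: extend $f$ and $g$, apply the whole-line inequality to $Ef\,Eg$, and use the equivalence $\|J^s(fg)\|_{L^p(\T)}\sim\|fg\|_{W^{s,p}(\T)}\lesssim\|Ef\,Eg\|_{W^{s,p}(\R)}$. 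You instead sketch the full harmonic-analysis proof: Bony paraproducts and square functions for the interior exponents (your use of $s>0$ for summing the high--high piece, and your reduction of the second inequality to the first via $W^{s,p}(\T)\hookrightarrow L^\infty(\T)$ for $s>1/p$, $p<\infty$, are both correct and standard), then Coifman--Meyer, BMO, and Fefferman--Stein machinery for the endpoint exponents, and finally a de Leeuw-type transference rather than an extension operator to reach the torus. What the paper's route buys is economy and safety: the endpoint cases are the genuinely hard part of the theorem (they are the entire content of \cite{BoLi-KatoPonce,Li-KatoPonce}), so a numerical-analysis paper sensibly cites them rather than reproving them, and the restriction--extension argument is a three-line device that sidesteps any question about periodic multiplier theory. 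What your route buys is a transparent account of the non-endpoint range and of where each hypothesis enters; but note that at the endpoints your proof is, in substance, the same appeal to \cite{BoLi-KatoPonce,Li-KatoPonce} that the paper makes, only phrased as a sketch, so the apparent gain in self-containedness evaporates exactly where the result is hardest. A further small caution: bilinear de Leeuw transference at $L^\infty$/BMO endpoints is itself a delicate matter, which is one more reason the paper's extension-operator transfer is the more robust choice for the periodic statement.
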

\begin{remark}\upshape 
The Kato--Ponce inequality was originally established in whole space $\R$, but it also holds for periodic functions on $\bT$. This can be proved by using Stein's extension operator $E:L^1(\T)\rightarrow L^1(\R)$, which is bounded from $W^{s,p}(\T)$ to $W^{s,p}(\R)$ for all $s\ge 0$ and $1< p<\infty$. Therefore,
\begin{align*}
\big\|J^s(fg)\big\|_{L^{p}(\T)} \sim \big\|fg\big\|_{W^{s,p}(\T)}
&\lesssim 
\big\|Ef\,Eg\big\|_{W^{s,p}(\R)} \\
&\lesssim 
C\big( \|Ef\|_{W^{s,p_1}(\R)}\|Eg\|_{L^{p_2}(\R)}
+ \|Eg\|_{W^{s,p_3}(\R)}\|Ef\|_{L^{p_4}(\R)}\big) \\
&\lesssim 
C\big( \|f\|_{W^{s,p_1}(\T)}\|g\|_{L^{p_2}(\T)}
+ \|g\|_{W^{s,p_3}(\T)}\|f\|_{L^{p_4}(\T)}\big) \\
&\lesssim 
C\big( \|J^sf\|_{L^{p_1}(\T)}\|g\|_{L^{p_2}(\T)}
+ \|J^sg\|_{L^{p_3}(\T)}\|f\|_{L^{p_4}(\T)}\big). 
\end{align*}

\end{remark}

In addition to the Kato--Ponce inequality, we will also use the following basic inequality (as a result of the H\"older and Sobolev embedding inequalities): 
\begin{align}\label{lem:ga-aga}
\|fg\|_{L^2}\lesssim \|f\|_{H^\gamma}\|g\|_{H^{a(\gamma)}}
\quad\mbox{for}\,\,\, f\in H^\gamma\,\,\,\mbox{and}\,\,\, g\in H^{a(\gamma)} ,\,\,\,\mbox{with}\,\,\,
\gamma\in [0,1] , 
\end{align}
where 
\begin{equation}\label{def:a-gamma}
a(\gamma)=
\left\{ \aligned
    &\mbox{$\frac12$}+ &&\mbox{when } \gamma= 0,\\
    &\mbox{$\frac12$}-\gamma  &&\mbox{when } \gamma\in (0,\mbox{$\frac12$}),\\
    &0+  &&\mbox{when } \gamma=\frac12,\\
    &0  &&\mbox{when } \gamma\in (\mbox{$\frac12$},1].
   \endaligned
   \right.
\end{equation}

\subsection{Integration by parts}

The following integration-by-parts formula is closely related to the nonlinearity of the KdV equation, and therefore will be used frequently. A proof of this result can be found in \cite{WuZhao-BIT}.
\begin{lemma}[Integration by parts] \label{lem:1-form}
Let $s\ge s_0\ge 0 $ and consider the space-time functions $f(t,x)$ and $g(t,x)$ satisfying  
$\P_0f(t)=\P_0g(t)=0$ for $t\in[s_0,s]$. Then the following formula holds: 
\begin{align*}
   &\hspace{-10pt} \int_{s_0}^s \fe^{t\partial_x^3}\left(\fe^{-t\partial_x^3}f(t)\cdot \fe^{-t\partial_x^3}g(t)\right)\,\d t\\
  =&
  \frac13 \fe^{t\partial_x^3}\partial_x^{-1}\left(\fe^{-t\partial_x^3}\partial_x^{-1}f(t)\cdot \fe^{-t\partial_x^3}\partial_x^{-1}g(t)\right)\Big|_{t=s_0}^{t=s}
  + \frac{1}{2\pi} \int_{s_0}^s\!\!\int_\T f(t)\>g(t)\,\d x\d t\\
  &-\frac13 \int_{s_0}^s \fe^{t\partial_x^3}\partial_x^{-1}\Big(\fe^{-t\partial_x^3}\partial_x^{-1}\partial_tf(t)\cdot \fe^{-t\partial_x^3}\partial_x^{-1}g(t)+\fe^{-t\partial_x^3}\partial_x^{-1}f(t)\cdot \fe^{-t\partial_x^3}\partial_x^{-1}\partial_tg(t)\Big)\,\d t.
  \end{align*}
\end{lemma}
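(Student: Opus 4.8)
The plan is to prove the identity mode by mode in Fourier space, where the cubic dispersion relation of the KdV operator collapses the product into a pure oscillation that can be integrated by parts in time. First I would compute the Fourier symbol of the integrand. Since $\mathcal F_k[\fe^{t\partial_x^3}h]=\fe^{-itk^3}\hat h_k$ and products convert to convolutions, the $k$-th Fourier coefficient of the integrand on the left-hand side is
\[
\mathcal F_k\Big[\fe^{t\partial_x^3}\big(\fe^{-t\partial_x^3}f(t)\cdot\fe^{-t\partial_x^3}g(t)\big)\Big]
=\sum_{k_1+k_2=k}\fe^{it(k_1^3+k_2^3-k^3)}\hat f_{k_1}(t)\,\hat g_{k_2}(t).
\]
Using the algebraic identity $k_1^3+k_2^3-(k_1+k_2)^3=-3k_1k_2(k_1+k_2)$, the phase reduces to $\fe^{-3itk_1k_2k}$, so the resonance factor is $\mu:=-3k_1k_2k$. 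Because $\P_0f(t)=\P_0g(t)=0$ forces $\hat f_0(t)=\hat g_0(t)=0$, every term in the sum has $k_1\neq0$ and $k_2\neq0$, and hence $\mu=0$ if and only if $k=k_1+k_2=0$.

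Next I would split the sum according to whether the interaction is resonant. For the resonant part $k=0$ we have $k_2=-k_1$ and $\mu=0$, so the phase is trivial and this mode integrates to $\int_{s_0}^s\sum_{k_1\neq0}\hat f_{k_1}(t)\hat g_{-k_1}(t)\,\d t$; by the convolution formula together with $\mathcal F_0[fg]=\P_0(fg)=\frac1{2\pi}\int_\T fg\,\d x$, this equals $\frac1{2\pi}\int_{s_0}^s\int_\T f(t)g(t)\,\d x\,\d t$, which is the second term on the right. For each non-resonant mode $k\neq0$ I would integrate by parts in time using $\fe^{-3itk_1k_2k}=\frac1{-3ik_1k_2k}\frac{\d}{\d t}\fe^{-3itk_1k_2k}$, producing a boundary term together with an interior integral of $\frac{\d}{\d t}(\hat f_{k_1}\hat g_{k_2})=\partial_t\hat f_{k_1}\,\hat g_{k_2}+\hat f_{k_1}\,\partial_t\hat g_{k_2}$. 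The crucial observation is that the prefactor factorizes as
\[
\frac1{-3ik_1k_2k}=\frac13\cdot\frac1{ik_1}\cdot\frac1{ik_2}\cdot\frac1{ik},
\]
which is precisely the Fourier symbol produced by applying $\partial_x^{-1}$ to each of $f$ and $g$ and a further $\partial_x^{-1}$ to their product, carrying an overall factor $\frac13$. Reinserting the oscillations $\fe^{-3itk_1k_2k}$ as the symbols of $\fe^{t\partial_x^3}$ and $\fe^{-t\partial_x^3}$, the boundary term becomes exactly $\frac13\fe^{t\partial_x^3}\partial_x^{-1}(\fe^{-t\partial_x^3}\partial_x^{-1}f\cdot\fe^{-t\partial_x^3}\partial_x^{-1}g)\big|_{t=s_0}^{t=s}$, while the interior integral reproduces the $-\frac13\int_{s_0}^s(\cdots)\,\d t$ term of the statement, the minus sign coming from the integration by parts.

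Finally I would reassemble: summing the resonant and non-resonant contributions over all $k$ and inverting the Fourier transform recovers the claimed identity term by term. The $k=0$ mode is automatically absent from the boundary and interior integral terms because the outer $\partial_x^{-1}$ annihilates the zero mode (consistent with $\mu=0$ there), so no division by zero ever occurs. The \textbf{main technical point} is not the algebra but justifying the manipulations analytically: I must ensure that the time derivatives $\partial_t\hat f_{k_1}$, $\partial_t\hat g_{k_2}$ are legitimate and that Fubini, together with the interchange of $\sum_k$ with $\int_{s_0}^s$ and with $\frac{\d}{\d t}$, are valid. This is where the tacit regularity hypotheses on $f,g$ enter, namely that $f,g$ are $C^1$ in time with values in a Sobolev space high enough that all the series converge absolutely and uniformly; granting these, the mode-by-mode identity lifts to the stated operator identity, and the only place where care is genuinely required is the bookkeeping of the factors of $i$ and the overall sign in the integration by parts.
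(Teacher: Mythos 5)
Your proof is correct, and it coincides with the paper's: the paper gives no proof of its own for this lemma (it defers to \cite{WuZhao-BIT}), and the argument there is exactly your computation --- expand both sides in Fourier modes, use $k_1^3+k_2^3-(k_1+k_2)^3=-3k_1k_2(k_1+k_2)$ together with $\hat f_0=\hat g_0=0$ to isolate the resonant set $k=0$ (which produces the $\frac{1}{2\pi}\int\!\!\int fg$ term), and integrate by parts in time on the non-resonant modes, with the factorization $\frac{1}{-3ik_1k_2k}=\frac13(ik_1)^{-1}(ik_2)^{-1}(ik)^{-1}$ identifying the boundary and interior terms as the stated $\partial_x^{-1}$ expressions. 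Your sign and phase bookkeeping is accurate, so nothing further is needed.
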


\section{New tools for the construction of low-regularity integrators}\label{section:tools}
In this section, we establish several new technical tools which can be used to construct and analyze low-regularity integrators with improved convergence orders. These technical tools are used in the following sections in estimating the local truncation errors and establishing the stability estimates.


\subsection{A logarithmically growing trilinear estimate on $L^2$} 

The following trilinear estimate will be used multiple times in the analysis of local truncation errors. 
\begin{lemma}\label{lem:ln-loss}
For any $f,g,h\in L^2$ we define $M(f,g,h)$ to be a function determined by its Fourier coefficients 
$$
\mathcal{F}_k[M(f,g,h)] = \sum_{k_1+k_2+k_3=k}m(k,k_1,k_2,k_3)
\hat f_{k_1}\>\hat g_{k_2}\>\hat h_{k_3}  ,
$$
where $m$ is a multiplier satisfying the following estimate {\rm(}for some constants $\theta_0>0$ and $A\ge 2${\rm):} 
\begin{align}\label{est:mk}
|m(k,k_1,k_2,k_3)|\le A^{\theta} \big[ \langle k\rangle^{-\frac12-\theta}\langle k_3\rangle^{-\frac12-\theta}+\langle k_2\rangle^{-\frac12-\theta} \langle k_3\rangle^{-\frac12-\theta}\big] 
\quad \forall\, \theta\in [0,\theta_0]. 
\end{align}
Then the multilinear operator $M:L^2\times L^2\times L^2\rightarrow L^2$ is well defined and satisfies the following estimate:
$$
\|M(f,g,h)\|_{L^2}\lesssim  (\ln A)\, \|f\|_{L^2}\|g\|_{L^2}\|h\|_{L^2}.
$$
\end{lemma}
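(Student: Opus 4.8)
The plan is to estimate $\|M(f,g,h)\|_{L^2}$ by duality, testing against an arbitrary $w\in L^2$ with $\|w\|_{L^2}=1$, so that we must bound
\begin{align*}
\langle M(f,g,h),w\rangle
= 2\pi\sum_{k_1+k_2+k_3=k} m(k,k_1,k_2,k_3)\,\hat f_{k_1}\,\hat g_{k_2}\,\hat h_{k_3}\,\overline{\hat w_k}.
\end{align*}
Because the multiplier bound \eqref{est:mk} has two terms, I would split $M=M^{(1)}+M^{(2)}$ according to the two summands $\langle k\rangle^{-\frac12-\theta}\langle k_3\rangle^{-\frac12-\theta}$ and $\langle k_2\rangle^{-\frac12-\theta}\langle k_3\rangle^{-\frac12-\theta}$, and treat each piece separately; by symmetry of the roles of the variables the two pieces are handled in the same way. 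The key feature to exploit is that \eqref{est:mk} holds \emph{uniformly in} $\theta\in[0,\theta_0]$ with the factor $A^\theta$; the logarithm $\ln A$ in the conclusion will come from optimizing over $\theta$ (roughly choosing $\theta\sim 1/\ln A$), which is the standard mechanism by which a borderline, logarithmically divergent estimate is tamed.

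For a fixed $\theta\in(0,\theta_0]$, I would bound the first piece by absorbing the weights $\langle k\rangle^{-\frac12-\theta}$ and $\langle k_3\rangle^{-\frac12-\theta}$ into Cauchy--Schwarz in the appropriate summation variables. The natural approach is to write the trilinear sum as an integral of products and pass to physical space: the factor $\langle k_3\rangle^{-\frac12-\theta}\hat h_{k_3}$ is the Fourier coefficient of $J^{-\frac12-\theta}h$, which lies in $L^2$; the weight $\langle k\rangle^{-\frac12-\theta}$ on the output pairs with $w$ to give $J^{-\frac12-\theta}w\in L^2$. The point is that $J^{-\frac12-\theta}:L^2\to L^\infty$ is bounded with operator norm controlled by $(\sum_k\langle k\rangle^{-1-2\theta})^{1/2}\lesssim \theta^{-1/2}$, since on the torus the series $\sum_{k\in\Z}\langle k\rangle^{-1-2\theta}$ behaves like $\theta^{-1}$ as $\theta\to 0^+$. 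Thus after using H\"older to put the rough factors in $L^\infty$ (via the $-\frac12-\theta$ smoothing) and the remaining factors in $L^2$, each piece is bounded by
\begin{align*}
|\langle M^{(1)}(f,g,h),w\rangle| \lesssim A^{\theta}\,\theta^{-1}\,\|f\|_{L^2}\|g\|_{L^2}\|h\|_{L^2}\|w\|_{L^2},
\end{align*}
and similarly for $M^{(2)}$, where the two $\theta^{-1/2}$ factors from the two $L^\infty$ embeddings combine to $\theta^{-1}$.

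Finally I would optimize the resulting bound $A^\theta\theta^{-1}$ over $\theta\in(0,\theta_0]$. Taking $\theta=\min\{\theta_0,1/\ln A\}$ gives $A^\theta=\fe^{\theta\ln A}\le \fe\lesssim 1$ and $\theta^{-1}\lesssim \ln A$ (using $A\ge 2$ so that $\ln A\ge \ln 2>0$, and $\theta_0$ fixed), which yields exactly the claimed estimate
\begin{align*}
\|M(f,g,h)\|_{L^2}\lesssim (\ln A)\,\|f\|_{L^2}\|g\|_{L^2}\|h\|_{L^2}.
\end{align*}
The main obstacle I anticipate is the bookkeeping in the H\"older step: one must verify that the two genuinely ``smoothing'' weights (the pair $\langle k\rangle^{-\frac12-\theta}$, $\langle k_3\rangle^{-\frac12-\theta}$ in the first term, and $\langle k_2\rangle^{-\frac12-\theta}$, $\langle k_3\rangle^{-\frac12-\theta}$ in the second) can always be assigned to two distinct factors, leaving exactly two factors to sit in $L^2$ and one output paired with $w$, so that the product structure closes with three $L^2$ norms on $f,g,h$. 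Getting the exponent arithmetic to produce precisely $\theta^{-1}$ (and not a worse negative power of $\theta$, which would spoil the single logarithm) is the delicate point, and it hinges on using the sharp $L^2\to L^\infty$ bound for $J^{-\frac12-\theta}$ on $\T$ with its $\theta^{-1/2}$ constant rather than a cruder embedding.
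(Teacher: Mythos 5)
Your proposal is correct and follows essentially the same route as the paper: duality, splitting according to the two terms of the multiplier bound, placing the two weighted factors in $L^\infty$ via the $J^{-\frac12-\theta}:L^2\to L^\infty$ embedding with its sharp $\theta^{-1/2}$ constant (the paper writes this as the operator $\tilde m(D,\theta)$ applied to the functions with nonnegative Fourier coefficients $|\hat f_k|$, etc.), and then choosing $\theta=\min\{\theta_0,1/\ln A\}$ to convert $A^\theta\theta^{-1}$ into $\ln A$. The only bookkeeping point worth making explicit is that, since \eqref{est:mk} is an inequality rather than an identity, one must first replace all Fourier coefficients by their absolute values before rewriting the sum as a physical-space pairing of products — exactly as the paper does.
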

\begin{proof}
By the duality between $L^2$ and itself, it suffices to prove $|\langle M(f,g,h), \varphi\rangle| \lesssim \ln A$ for any functions $f,g,h,\varphi\in L^2$ such that $ \|f\|_{L^2}=\|g\|_{L^2}=\|h\|_{L^2}=\|\varphi\|_{L^2}=1$. 
By the Parseval identity and \eqref{est:mk}, the following result holds: 
\begin{align}\label{T-varphi1}
|\langle M(f,g,h), \varphi\rangle| 
&= 2\pi \bigg| \sum\limits_{k\in\Z}  \mathcal{F}_k[M(f,g,h)]  \overline{\mathcal{F}_k[\varphi]}  \bigg| \notag\\ 
&\lesssim  \sum\limits_{k\in\Z} \sum_{k_1+k_2+k_3=k} 
A^{\theta} \big[ \langle k\rangle^{-\frac12-\theta}\langle k_3\rangle^{-\frac12-\theta}+\langle k_2\rangle^{-\frac12-\theta} \langle k_3\rangle^{-\frac12-\theta}\big]
|\hat f_{k_1}||\hat g_{k_2}||\hat h_{k_3}||\hat\varphi_k| .
\end{align}
Let $\tilde f$, $\tilde g$, $\tilde h$ and $\tilde\varphi$ be the functions with the nonnegative Fourier coefficients 
$|\hat f_{k}|$, $|\hat g_{k}|$, $|\hat h_{k}|$ and $|\hat\varphi_k|$, respectively. These functions satisfy that  
$$
\|\tilde f\|_{L^2}=\|\tilde g\|_{L^2}=\|\tilde h\|_{L^2}=\|\tilde \varphi\|_{L^2}=
\|f\|_{L^2}=\|g\|_{L^2}=\|h\|_{L^2}=\|\varphi\|_{L^2}=1 .
$$
If we define $\tilde m(D,\theta)$ to be the linear operator associated to the multiplier $\tilde m(k,\theta)=A^{\frac{\theta}2}\langle k\rangle^{-\frac12-\theta}$. Namely, 
$$
\mathcal{F}_k[\tilde m(D,\theta) v ]= A^{\frac{\theta}2}\langle k\rangle^{-\frac12-\theta} \hat v_k . 
$$
Then \eqref{T-varphi1} can be written as  
\begin{align}\label{T-varphi}
|\langle M(f,g,h), \varphi\rangle| 
&\lesssim \sum\limits_{k\in\Z} \sum_{k_1+k_2+k_3=k} 
\tilde m(k,\theta)\tilde m(k_3,\theta)|\hat f_{k_1}||\hat g_{k_2}||\hat h_{k_3}||\hat\varphi_k| \notag\\
&\quad\, 
+\sum\limits_{k\in\Z} \sum_{k_1+k_2+k_3=k}  \tilde m(k_2,\theta)\tilde m(k_3,\theta) 
|\hat f_{k_1}||\hat g_{k_2}||\hat h_{k_3}||\hat\varphi_k| \notag\\
&= (\tilde f\,\tilde g\,\tilde m(D,\theta)\tilde h , \tilde m(D,\theta)\tilde \varphi)
+ (\tilde f\,[\tilde m(D,\theta)\tilde g] [\tilde m(D,\theta)\tilde h], \tilde \varphi) \notag\\
&\lesssim \|\tilde f\|_{L^2} \|\tilde g\|_{L^2} \|\tilde m(D,\theta)\tilde h\|_{L^\infty} \|\tilde m(D,\theta)\tilde \varphi\|_{L^\infty} \notag\\
&\quad\, 
+\|\tilde f\|_{L^2} \|\tilde m(D,\theta)\tilde g\|_{L^\infty} \|\tilde m(D,\theta)\tilde h\|_{L^\infty} \|\tilde \varphi\|_{L^2} .
\end{align} 
It remains to prove the following result:
\begin{align}\label{T-tildem}
\|\tilde m(D,\theta)\tilde g\|_{L^\infty} + \|\tilde m(D,\theta)\tilde h\|_{L^\infty} +
\|\tilde m(D,\theta)\tilde \varphi\|_{L^\infty} \lesssim \sqrt{\ln A} .
\end{align} 
This can be proved as follows: 
\begin{align}\label{mgA}
\|\tilde m(D,\theta)\tilde g \|_{L^\infty}
&=
\Big\| \sum_{k\in\Z} A^{\frac{\theta}{2}} \langle k\rangle^{-\frac12-\theta}  |\hat g_k| \fe^{ikx} \Big\|_{L^\infty} \notag\\
&\lesssim 
A^{\frac{\theta}{2}}
\Big(\sum_{k\in\Z} \langle k\rangle^{-1-2\theta} \Big)^{\frac12}
\Big(\sum_{k\in\Z} |\hat g_k|^2\Big)^{\frac12} \notag\\
&\lesssim 
\frac{A^{\frac{\theta}{2}}}{\sqrt{\theta}} \|g\|_{L^2} .
\end{align} 
If $\frac{1}{\ln A} \le \theta_0$ then we can choose $\theta=1/\ln A$ so that $A^{\frac{\theta}{2}}\lesssim 1$ and $\frac{1}{\sqrt{\theta}} =\sqrt{\ln A}$. In this case, inequality \eqref{mgA} reduces to  
\begin{align*}
\| \tilde m(D,\theta)\tilde g\|_{L^\infty}
&\lesssim \sqrt{\ln A} . 
\end{align*}
If $\frac{1}{\ln A} \ge\theta_0$ then $2\le A\le e^{1/\theta_0}$ and therefore $A\sim 2$, which implies that $\sqrt{\ln A}\sim 1$. In this case, we can choose $\theta=\theta_0$ so that inequality \eqref{mgA} implies that   
$$
\| \tilde m(D,\theta)\tilde g \|_{L^\infty} \lesssim 1 \lesssim \sqrt{\ln A} . 
$$
This proves \eqref{T-tildem} and therefore completes the proof of Lemma \ref{lem:ln-loss}. 
\end{proof}

\subsection{Averaging approximation of exponential functions}
\label{section:average}

In this subsection, we establish some average estimates which play important roles in the analysis of the local truncation errors. We define the {\it average} and {\it oscillation} of a function $f$ in the interval $[0,\tau]$ by 
$$
M_\tau(f)=\frac1\tau \int_0^\tau f(t)\,\d t ,
$$
and 
$$
\|f\|_{{\rm osc}([0,\tau])}:= \max\limits_{\{t_1,t_2\}\subset [0,\tau]}\big|f(t_1)-f(t_2)\big| , 
$$
respectively. 
Then the following basic inequality holds and will be frequently used: 
\begin{align}\label{lem:average1}
\big| M_\tau(fg)-M_\tau(f)M_\tau(g) \big| 
\le \|f\|_{{\rm osc}([0,\tau])} \|g\|_{{\rm osc}([0,\tau])} . 
\end{align}
Indeed, 
\begin{align}\label{lem:average111}
\big| M_\tau(fg)-M_\tau(f)M_\tau(g) \big| 
&= \bigg| \frac1\tau \int_0^\tau fg\,\d s-\frac1\tau \int_0^\tau M_\tau(f) g\,\d s \bigg| \notag \\
&=\bigg| \frac1\tau \int_0^\tau \big(f- M_\tau(f)\big)g\,\d s \bigg| \notag\\
&=\bigg|  \frac1\tau \int_0^\tau \big(f- M_\tau(f)\big)\big(g-M_\tau(g)\big)\,\d s \bigg| \notag\\
&\le \|f\|_{{\rm osc}([0,\tau])} \|g\|_{{\rm osc}([0,\tau])} . 
\end{align}

In the following lemma we prove that, if $f$ and $g$ are exponential functions with imaginary powers, then much better estimates can be obtained. 
\begin{lemma}\label{lem:average2}
Let $\alpha,\beta\in \R$. If $\alpha, \beta\ne 0$ and $s\in[0,\tau]$, then 
\begin{align}\label{average2-1}
\big|M_\tau\big(\fe^{is(\alpha+\beta)}\big)-M_\tau\big(\fe^{is\alpha}\big)M_\tau\big(\fe^{is\beta}\big)\big|\lesssim  \min\left\{\left|\frac{\alpha}{\beta}\right|,\left|\frac{\beta}{\alpha}\right|,\tau|\alpha|,\tau|\beta|\right\}.  
\end{align} 
If $\alpha+\beta\ne 0$, then 
\begin{align}\label{average2-2}
\big|M_\tau\big(\fe^{is(\alpha+\beta)}\big)-M_\tau\big(\fe^{is\alpha}\big)M_\tau\big(\fe^{is\beta}\big)\big|\lesssim  \tau^{-1}|\alpha+\beta|^{-1}.  
\end{align} 
\end{lemma}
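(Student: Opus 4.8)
The plan is to collapse both inequalities onto a single scalar function. Since $M_\tau(\fe^{is\alpha})=\frac1\tau\int_0^\tau\fe^{is\alpha}\,\d s=\frac{\fe^{i\tau\alpha}-1}{i\tau\alpha}$, I introduce
$$
g(x):=\frac{\fe^{ix}-1}{ix}=\frac1x\int_0^x\fe^{it}\,\d t ,
$$
so that, writing $a=\tau\alpha$ and $b=\tau\beta$, we have $M_\tau(\fe^{is\alpha})=g(a)$, $M_\tau(\fe^{is\beta})=g(b)$, $M_\tau(\fe^{is(\alpha+\beta)})=g(a+b)$, and the quantity to estimate is $D:=g(a+b)-g(a)g(b)$. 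First I would record four elementary facts about $g$: (i) $|g(x)|\le 1$, since $g$ is an average of unit-modulus functions; (ii) $|g(x)|\le 2/|x|$, directly from the closed form; (iii) $|1-g(x)|\le |x|/2$, which follows from $|\fe^{ix}-1-ix|\le x^2/2$; and (iv) the derivative identity $g'(x)=(\fe^{ix}-g(x))/x$, whence $|g'(x)|\le 2/|x|$.

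The estimate \eqref{average2-1} would then be extracted entirely from the algebraic splitting
$$
D=\big[g(a+b)-g(b)\big]+g(b)\big[1-g(a)\big],
$$
by bounding the two terms in two different ways. For the bounds $\tau|\alpha|$ and (by the symmetry $\alpha\leftrightarrow\beta$) $\tau|\beta|$, I would estimate the bracketed difference through the integral representation $g(a+b)-g(b)=\frac1\tau\int_0^\tau\fe^{is\beta}(\fe^{is\alpha}-1)\,\d s$ together with $|\fe^{is\alpha}-1|\le\tau|\alpha|=|a|$, and the second term through (i) and (iii); this gives $|D|\lesssim|a|=\tau|\alpha|$. For the bounds $|\alpha/\beta|$ and $|\beta/\alpha|$, I would instead estimate the bracketed difference by the fundamental-theorem-of-calculus bound $|g(a+b)-g(b)|\le|a|\sup|g'|$ using (iv), and the second term by (ii) and (iii), yielding $|D|\lesssim|a|/|b|=|\alpha/\beta|$. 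Taking the minimum of the four estimates proves \eqref{average2-1}.

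The inequality \eqref{average2-2} would follow from the cruder bound $|D|\le|g(a+b)|+|g(a)||g(b)|$: the first summand is at most $2/|a+b|$ by (ii), and since $\max(|a|,|b|)\ge|a+b|/2$, one factor of the product is at most $4/|a+b|$ by (ii) while the other is at most $1$ by (i), so $|D|\lesssim|a+b|^{-1}=\tau^{-1}|\alpha+\beta|^{-1}$.

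The main obstacle lies in the two ratio bounds $|\alpha/\beta|$, $|\beta/\alpha|$, because the derivative estimate $|g'|\le2/|x|$ degenerates near $x=0$ and must be applied on a segment joining $b$ to $a+b$ that stays away from the origin. This forces a case split: when $|a|\le|b|/2$, every point $b+ta$ with $t\in[0,1]$ satisfies $|b+ta|\ge|b|/2$, so (iv) gives $\sup|g'|\le 4/|b|$ and the argument above goes through; when $|a|>|b|/2$ the claimed bound $|\alpha/\beta|$ already exceeds $1/2$, so it is dominated by the trivial estimate $|D|\le|g(a+b)|+|g(a)||g(b)|\le 2$. An analogous harmless split ($|a+b|\le 2$ versus $|a+b|>2$) disposes of the small-argument regime in \eqref{average2-2}. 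Once these boundary regimes are isolated, the remaining computations are the routine scalar estimates (i)--(iv).
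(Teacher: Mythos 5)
Your proof is correct, and it reaches the two estimates by a route that differs from the paper's in its key technical steps, so a comparison is worth recording. Both arguments start from the same closed form $M_\tau(\fe^{is\alpha})=\frac{\fe^{i\tau\alpha}-1}{i\tau\alpha}$ and both obtain the four bounds separately before taking the minimum, exploiting the $\alpha\leftrightarrow\beta$ symmetry. For the bounds $\tau|\alpha|,\tau|\beta|$ the two arguments are essentially the same (the paper phrases it as the oscillation inequality $|M_\tau(fg)-M_\tau(f)M_\tau(g)|\le\|f\|_{{\rm osc}}\|g\|_{{\rm osc}}$, you phrase it as the splitting $D=[g(a+b)-g(b)]+g(b)[1-g(a)]$ with $|1-g(x)|\le|x|/2$; the underlying integral identity is identical). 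The genuine divergence is in the ratio bounds: the paper integrates by parts in $s$ via $\fe^{is\alpha}=\frac{1}{i\alpha}\partial_s\fe^{is\alpha}$, producing an exact three-term identity each of whose terms is bounded by $|\beta|/|\alpha|$ with no case analysis, whereas you apply the mean value inequality to $g$ on the segment from $b$ to $a+b$ using $|g'(x)|\le 2/|x|$, which forces the split $|a|\le|b|/2$ versus $|a|>|b|/2$ to keep the segment away from the origin (your handling of both cases is correct). For \eqref{average2-2} your argument is actually shorter and cleaner than the paper's: the crude bound $|D|\le|g(a+b)|+|g(a)||g(b)|$ together with $|g(x)|\le 2/|x|$ and $\max(|a|,|b|)\ge|a+b|/2$ does the job directly, while the paper re-derives the estimate from its integrated-by-parts expression term by term; note also that your route only uses $\alpha+\beta\ne0$, exactly matching the hypothesis of \eqref{average2-2}, whereas the paper's displayed expression implicitly works in the regime $\alpha\ne0$. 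In short, the paper's integration by parts buys a case-free identity; your calculus-of-$g$ formulation buys reusable elementary facts (i)--(iv) and a more transparent proof of the second inequality, at the cost of one harmless case split.
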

\begin{proof}
Since  
$\|\fe^{is\alpha}\|_{{\rm osc}([0,\tau])}\lesssim \min\{1,\tau|\alpha|\}$, 
 \eqref{lem:average1} implies that  
\begin{align}\label{AVE-1}
\big|M_\tau\big(\fe^{is(\alpha+\beta)}\big)-M_\tau\big(\fe^{is\alpha}\big)M_\tau\big(\fe^{is\beta}\big)\big|
\lesssim  \min\left\{\tau|\alpha|,\tau|\beta|\right\}.  
\end{align} 
Furthermore, similar as \eqref{lem:average111}, we have 
\begin{align*}
&\hspace{-10pt} M_\tau\big(\fe^{is(\alpha+\beta)}\big)-M_\tau\big(\fe^{is\alpha}\big)M_\tau\big(\fe^{is\beta}\big)\\
=&\frac1\tau\int_0^\tau \left(\fe^{is\alpha}-M_\tau\big(\fe^{is\alpha}\big)\right) \fe^{is\beta} \,\d s\\
=&\frac1\tau\int_0^\tau \left(\fe^{is\alpha}-M_\tau\big(\fe^{is\alpha}\big)\right)\big(\fe^{is\beta}-1\big)\,\d s\\
=&\frac1\tau\int_0^\tau \fe^{is\alpha}\big(\fe^{is\beta}-1\big)\,\d s
-\frac1\tau\int_0^\tau M_\tau\big(\fe^{is\alpha}\big)\big(\fe^{is\beta}-1\big)\,\d s\\
=&\frac1\tau\int_0^\tau \frac{1}{i\alpha} \partial_s\big(\fe^{is\alpha}\big)\big(\fe^{is\beta}-1\big)\,\d s
-\frac1\tau\int_0^\tau M_\tau\big(\fe^{is\alpha}\big)\big(\fe^{is\beta}-1\big)\,\d s.
\end{align*}
Then, using integration by parts, we obtain  
\begin{align}
&\hspace{-10pt} \big| M_\tau\big(\fe^{is(\alpha+\beta)}\big)-M_\tau\big(\fe^{is\alpha}\big)M_\tau\big(\fe^{is\beta}\big) \big| \notag\\
=&\bigg| \frac{1}{i\tau\alpha}\fe^{i\tau\alpha}\big(\fe^{i\tau\beta}-1\big)-\frac1\tau\int_0^\tau \frac{\beta}{\alpha}
\fe^{is\alpha+is\beta}\,\d s-M_\tau\big(\fe^{is\alpha}\big)\frac1\tau\int_0^\tau \big(\fe^{is\beta}-1\big)\,\d s \bigg| \label{eqs:M-fg-exp11} \\
\le&\frac{1}{\tau|\alpha|}\big|\fe^{i\tau\beta}-1\big|+\frac{|\beta|}{|\alpha|}+\big|M_\tau\big(\fe^{is\alpha}\big)\big| \frac1\tau\int_0^\tau \big|\fe^{is\beta}-1\big|\,\d s .
\label{eqs:M-fg-exp}
\end{align}
By substituting the following estimates into \eqref{eqs:M-fg-exp}: 
$$
\big|\fe^{is\beta}-1\big|\le \tau|\beta|
\quad\mbox{and}\quad \big|M_\tau\big(\fe^{is\alpha}\big)\big|=\Big|\frac1{i\tau\alpha} \big|\fe^{i\tau\alpha}-1\big| \Big|\lesssim \frac{1}{\tau|\alpha|},
$$
we obtain  
$$
\big|M_\tau\big(\fe^{is(\alpha+\beta)}\big)-M_\tau\big(\fe^{is\alpha}\big)M_\tau\big(\fe^{is\beta}\big)\big|
\lesssim \frac{|\beta|}{|\alpha|}.
$$
Based on the symmetry between $\alpha$ and $\beta$, the following result also holds:  
$$
\big|M_\tau\big(\fe^{is(\alpha+\beta)}\big)-M_\tau\big(\fe^{is\alpha}\big)M_\tau\big(\fe^{is\beta}\big)\big|
\lesssim \frac{|\alpha|}{|\beta|}.
$$
The two estimates above, together with \eqref{AVE-1}, imply the desired estimate in \eqref{average2-1}. 

In the case $|\alpha|\ge |\beta|$ we can obtain the following result directly from the expression in \eqref{eqs:M-fg-exp11}: 
\begin{align*}
&M_\tau\big(\fe^{is(\alpha+\beta)}\big)-M_\tau\big(\fe^{is\alpha}\big)M_\tau\big(\fe^{is\beta}\big)\\
&=\frac{1}{i\tau\alpha}\fe^{i\tau\alpha}\big(\fe^{i\tau\beta}-1\big)-\frac{1}{i\tau (\alpha+\beta)} \frac{\beta}{\alpha } 
\big(\fe^{i\tau(\alpha+\beta)}-1\big)-M_\tau\big(\fe^{is\alpha}\big)\frac1\tau\int_0^\tau \big(\fe^{is\beta}-1\big)\,\d s , 
\end{align*}
which implies that 
$$
\big|M_\tau\big(\fe^{is(\alpha+\beta)}\big)-M_\tau\big(\fe^{is\alpha}\big)M_\tau\big(\fe^{is\beta}\big)\big|
\lesssim \tau^{-1}\big(|\alpha|^{-1}+|\alpha+\beta|^{-1}\big) 
\lesssim \tau^{-1}|\alpha+\beta|^{-1} .
$$
Since the expression of $M_\tau\big(\fe^{is(\alpha+\beta)}\big)-M_\tau\big(\fe^{is\alpha}\big)M_\tau\big(\fe^{is\beta}\big)$ is symmetric about $\alpha$ and $\beta$, in the case $|\beta|\ge|\alpha|$ we can obtain the same result by switching the roles of $\alpha$ and $\beta$ in the argument above. This proves the desired estimate in \eqref{average2-2}.
\end{proof}

\subsection{Trilinear estimates associated to the KdV operator} \label{section:Gammak}

In this subsection, we establish new estimates for the phase function 
$$\phi:=k^3-k_1^3-k_2^3-k_3^3$$  
and use the results to prove two technical estimates for the following trilinear KdV operator: 
\begin{align}\label{C-KdV}
\mathcal C(v_1,v_2,v_3)=\int_{s_0}^s \fe^{t\partial_x^3}\P\left(\P\Big(\fe^{-t\partial_x^3}v_1(t)\cdot \fe^{-t\partial_x^3}v_2(t)\Big)\cdot \fe^{-t\partial_x^3}\partial_x^{-1}v_3(t)\right)\,\d t ,
\end{align}
where $s\ge s_0\ge 0$ are any two numbers such that $|s-s_0|\lesssim 1$. 
The trilinear estimates for the KdV operator established in this subsection will play a key role in the stability estimates for nonsmooth solutions in $C([0,T];H^\gamma)$ with $\gamma\in(0,1]$ possibly approaching zero. 

For the simplicity of notation, we decompose the set $\{ (k_1,k_2,k_3)\in \Z^3: k_1+k_2+k_3=k\}$ into the following two subsets: 
\begin{align*}
\Gamma_0(k):=&\{(k_1,k_2,k_3)\in \Z^3: k_1+k_2+k_3=k, k_1+k_2=0\,\,\mbox{or}\,\,k_1+k_3=0\,\,\mbox{or}\,\,k_2+k_3=0\} ,\\ 
\Gamma(k):=&\{(k_1,k_2,k_3)\in \Z^3: k_1+k_2+k_3=k, k_1+k_2\ne0,k_1+k_3\ne0,k_2+k_3\ne0\} ,
\end{align*}
and denote 
$$
|k_m|=\max\{|k|,|k_1|,|k_2|,|k_3|\}.
$$
We further decompose $\Gamma(k)$ into two subsets, i.e., $\Gamma(k)=\Gamma_1(k)\cup \Gamma_2(k)$, where  
\begin{align*}
&\Gamma_1(k):=\Big\{(k_1,k_2,k_3)\in \Gamma: |\phi| < \frac14 |k_m|^2 \Big\} ,\\
&\Gamma_2(k):=\Big\{(k_1,k_2,k_3)\in \Gamma:  |\phi|\ge \frac14 |k_m|^2 \Big\}.
\end{align*}

In the following lemma, we show that a good estimate exists for the phase function when $(k_1,k_2,k_3)\in\Gamma(k)$. Moreover, better estimates can be obtained for $(k_1,k_2,k_3)\in\Gamma_1(k)$ and $(k_1,k_2,k_3)\in\Gamma_2(k)$, respectively. These new estimates of the phase function can be used to analyze the trilinear KdV operator defined in \eqref{C-KdV}.

\begin{lemma}\label{lem:a4-est} 
Let $k\in\Z$. Then the following results hold. 
\begin{enumerate}
\item[(1) ]If $(k_1,k_2,k_3)\in \Gamma(k)$ then 
$$
|\phi|\gtrsim  |k_m|.
$$
\item[(2) ] If $(k_1,k_2,k_3)\in \Gamma_1(k)$ then 
$$
|k|\sim |k_1|\sim|k_2|\sim |k_3|.
$$
\item[(3) ] 
$\Gamma_2(k)$ can be further decomposed into $\Gamma_2(k)=\Gamma_{21}(k)\cup\Gamma_{22}(k)$, with 
\begin{align*}
\Gamma_{21}(k)&:=\{(k_1,k_2,k_3)\in \Gamma: \frac14|k_m|^2 \le  |\phi|\ll |k_m|^\frac{15}{7}\} , \\ 
\Gamma_{22}(k)&:=\{(k_1,k_2,k_3)\in \Gamma:  |\phi|\gtrsim  |k_m|^\frac{15}{7}\}.
\end{align*}
Moreover, for $(k_1,k_2,k_3)\in\Gamma_{21}(k)$ there exists $j,h\in \{1,2,3\}$ such that  
$|k_j+k_h|\ll |k_m|^{\frac57}$.
\end{enumerate}
\end{lemma}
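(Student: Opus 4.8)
The plan is to reduce everything to the algebraic factorization of the phase function. Since $k_1+k_2+k_3=k$, expanding $(k_1+k_2+k_3)^3$ and comparing terms yields the identity
\[
\phi=k^3-k_1^3-k_2^3-k_3^3=3(k_1+k_2)(k_2+k_3)(k_1+k_3)=3(k-k_1)(k-k_2)(k-k_3).
\]
Writing $b_1=k_1+k_2$, $b_2=k_2+k_3$, $b_3=k_1+k_3$, I would first record the two-sided comparison $\max_j|b_j|\sim|k_m|$: the upper bound $|b_j|\le 2|k_m|$ is immediate, while the lower bound follows from the inversion formulas $k=\frac12(b_1+b_2+b_3)$ and $2k_1=b_1-b_2+b_3$ (and cyclically), which give $|k_m|\le\frac32\max_j|b_j|$, hence $\max_j|b_j|\ge\frac23|k_m|$. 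On $\Gamma(k)$ each $b_j$ is a nonzero integer, so $|b_j|\ge 1$ and therefore $|\phi|=3|b_1b_2b_3|\ge 3\max_j|b_j|\gtrsim|k_m|$; this proves part (1).

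For part (2), I would argue that on $\Gamma_1(k)$ the smallness $|\phi|<\frac14|k_m|^2$ forbids two factors from being comparable to $|k_m|$. Relabel so that $|b_1|=\max_j|b_j|\ge\frac23|k_m|$. If, say, $|b_2|\ge\frac14|k_m|$, then $|\phi|\ge 3|b_1||b_2|\ge\frac12|k_m|^2$, contradicting the defining inequality of $\Gamma_1(k)$; hence $|b_2|,|b_3|<\frac14|k_m|$. Feeding this into the inversion formulas, each of $|k|,|k_1|,|k_2|,|k_3|$ is bounded below by $\frac12(|b_1|-|b_2|-|b_3|)\ge\frac1{12}|k_m|$, and since each is trivially $\le|k_m|$, they are all comparable, giving $|k|\sim|k_1|\sim|k_2|\sim|k_3|$.

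The decomposition $\Gamma_2(k)=\Gamma_{21}(k)\cup\Gamma_{22}(k)$ in part (3) is merely the split according to whether $|\phi|\ll|k_m|^{\frac{15}{7}}$ or $|\phi|\gtrsim|k_m|^{\frac{15}{7}}$, so only the final assertion requires work. On $\Gamma_{21}(k)$, again take $|b_1|\sim|k_m|$ to be the largest factor; then the product of the remaining two satisfies $|b_2||b_3|=\frac{|\phi|}{3|b_1|}\ll\frac{|k_m|^{15/7}}{|k_m|}=|k_m|^{\frac87}$, whence $\min\{|b_2|,|b_3|\}\le\sqrt{|b_2||b_3|}\ll|k_m|^{\frac47}\ll|k_m|^{\frac57}$. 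Since $b_2$ and $b_3$ are each of the form $k_j+k_h$ with $j\ne h$, this produces the required pair.

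The computations are short once the factorization is in hand; the step demanding the most care is the two-sided equivalence $\max_j|b_j|\sim|k_m|$ together with the explicit constants ($\frac23$ and $\frac14$) in part (2), since ruling out two simultaneously large factors hinges on the threshold $\frac14|k_m|^2$ in the definition of $\Gamma_1(k)$ beating the product bound. I would also note that $\Gamma_1(k)$ and $\Gamma_{21}(k)$ are nonempty only when $|k_m|$ is large — forced by comparing $|\phi|\ge 3$, respectively $\frac14|k_m|^2\le|\phi|\ll|k_m|^{\frac{15}{7}}$, against the upper constraints — which legitimizes absorbing additive constants into the $\sim$ and $\ll$ relations.
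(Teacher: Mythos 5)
Your proof is correct and rests on the same engine as the paper's own argument: the factorization $\phi=3(k_1+k_2)(k_2+k_3)(k_1+k_3)$, the fact that each factor is a nonzero integer on $\Gamma(k)$, and the comparison of the largest factor with $|k_m|$. Your bookkeeping via the pairwise sums $b_j$ and the inversion identities $2k=b_1+b_2+b_3$, $2k_1=b_1-b_2+b_3$, etc., replaces the paper's sign-and-ordering case analysis built on the auxiliary index $k_0=-k$, and all the quantitative steps (the thresholds $\tfrac23$ and $\tfrac14$, the lower bound $\tfrac1{12}|k_m|$ in part (2), the bound $\min\{|b_2|,|b_3|\}\ll|k_m|^{4/7}$ in part (3), and the remark that $\Gamma_{21}(k)\ne\emptyset$ forces $|k_m|$ to be large) check out.
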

\begin{proof}
For $(k_1,k_2,k_3)\in\Gamma(k)$, we denote $k_0=-k$ so that $k_0+k_1+k_2+k_3=0$ and 
\begin{align}\label{no-equal-cond}
k_j+k_h\ne0, \quad \mbox{for any}\,\, j,h\in \{0,1,2,3\}\,\,\mbox{such that}\,\, j\ne h.
\end{align}
By the symmetry among the indices $k_0,k_1,k_2,k_3$,  we may further assume the following relation: 
\begin{align}\label{k0k1k2k3}
|k_0|\ge |k_1|\ge |k_2|\ge |k_3| . 
\end{align}
In this case, the following results must hold:
\begin{align}\label{km=k0-k1}
|k_m|=|k_0|\sim |k_1|\,\,\,(\mbox{in particular}, |k_1|\le |k_0|\le 3|k_1|) \quad\mbox{and}\quad k_0\cdot k_1<0 .
\end{align}
In fact, if $k_0\cdot k_1\ge 0$ then the relation $k_0+k_1+k_2+k_3=0$ implies that 
$k_0=k_1=-k_2=-k_3$, which contradicts \eqref{no-equal-cond}. 
If $|k_0|>3|k_1|$ then $|k_0+k_1|>2|k_1|\ge |k_2+k_3|$, which contradicts the relation $k_0+k_1+k_2+k_3=0$. 
Therefore, $|k_0|\sim |k_1|$. This proves the results in \eqref{km=k0-k1}. 

Since $k_0\cdot k_1<0$ as shown in \eqref{km=k0-k1}, without loss of generality, we may assume that $k_0>0$ and $k_1<0$. 
\begin{enumerate}

\item[(1)] If $k_2\le 0$ then $|\phi|=|3(k_1+k_0)(k_1+k_2)(k_1+k_3)| \ge 3|k_1+k_2|\ge 3|k_1|\sim |k_m|$.

\noindent If $k_2>0$ then by the symmetry in the expression of $\phi=-k_0^3-k_1^3-k_2^3-k_3^3$, we have 
$|\phi|=|3(k_0+k_1)(k_0+k_2)(k_0+k_3)| \ge 3|k_0+k_2|\ge 3|k_0|\sim |k_m| . $

\item[(2)] 
In view of \eqref{k0k1k2k3}, we only need to prove $|k_0|\sim |k_3|$ for $(k_1,k_2,k_3)\in\Gamma_1(k)$. 
In fact, if $|k_0|\ge 6|k_3|$ then \eqref{km=k0-k1} implies that $|k_1|\ge \frac13|k_0|\ge 2|k_3|$, and therefore 
$$
|k_0+k_3|\ge \frac12|k_0| = \frac12|k_m|, \quad \mbox{and }\quad 
|k_0+k_2|=|k_1+k_3|\ge \frac12 |k_1|\ge \frac16|k_m| . 
$$
This implies that 
$$
|\phi| = |3(k_0+k_1)(k_0+k_2)(k_0+k_3)| \ge  3\times \frac12|k_m|\times \frac16|k_m| = \frac14 |k_m|^2
$$
which contradicts that $(k_1,k_2,k_3)\in\Gamma_1(k)$. This proves $|k_0|\sim |k_3|$ for $(k_1,k_2,k_3)\in\Gamma_1(k)$. 

\item[(3)] 
If $|\phi|\gtrsim  |k_m|^\frac{15}{7}$ is not true, then $|\phi|\ll  |k_0|^\frac{15}{7}$.
In this case, the following result must hold:
\begin{align}\label{k0+k1<<57}
|k_0+k_1| =|k_2+k_3|\ll |k_0|^\frac{5}{7} . 
\end{align} 
Otherwise $|k_0+k_1|\gtrsim |k_0|^\frac{5}{7}$, which together with \eqref{k0k1k2k3} implies that 
$$
|k_0+k_j| \ge |k_0+k_1| \gtrsim |k_0|^\frac{5}{7} \quad \mbox{for all } j\in \{1,2,3\}.
$$
This means that $|\phi|\gtrsim |k_0|^\frac{15}{7}$, which contradicts $|\phi|\ll |k_0|^\frac{15}{7}$. 
This proves \eqref{k0+k1<<57} and completes the proof of Lemma \ref{lem:a4-est} (3). 
\end{enumerate}

\end{proof}

The main result of this subsection is the following proposition, which contains new estimates of the trilinear KdV operator defined in \eqref{C-KdV} for low-regularity functions in $L^\infty(s_0,s;H^{\alpha})\times L^\infty(s_0,s;H^{\alpha})\times L^\infty(s_0,s;H^{\alpha})$, with $\alpha\in[0,1]$ possibly approaching zero. The results are proved by utilizing the new estimates for the phase function in Lemma \ref{lem:a4-est}. 

\begin{proposition}\label{lem:tri-linear}
Let $\alpha\in [0,1]$ and $s\ge s_0\ge 0$ with $|s-s_0|\lesssim 1$. 
Suppose that $v_j\in L^\infty(s_0,s;H^{\alpha})$ and $\partial_tv_j\in L^\infty(s_0,s;H^{-\frac{23}{14}})$ for $j=1,2,3$. 
Then the trilinear operator defined in \eqref{C-KdV} has the following properties{\rm:} 
\begin{itemize}
\item[(1) ] $\,$\vspace{-10pt}
\begin{align*}
&\big\|\mathcal C(v_1,v_2,v_3)\big\|_{L^2} \\ 
&\lesssim  |s-s_0|\prod\limits_{j=1}^3\|v_j\|_{X^0([s_0,s])}
+\|v_1\|_{L^\infty(s_0,s;H^{-\frac{23}{14}})}\|v_2\|_{L^\infty(s_0,s;L^2)}\|v_3\|_{L^\infty(s_0,s;L^2)} ,
\end{align*}
where 
\begin{align*}
\|v\|_{X^\alpha([s_0,s])}:=\|v\|_{L^\infty(s_0,s;H^{\alpha})}+ \big\|\partial_t v\big\|_{L^\infty(s_0,s;H^{-\frac{23}{14}})}.
\end{align*}
Moreover, the inequality still holds when $v_1,v_2,v_3$ are permuted on the right-hand side. 
\item[(2) ] If $v_j,j=1,2,3$ are time-independent, then 
\begin{align*}
\big\|\mathcal C(v_1,v_2,v_3)\big\|_{L^2}\lesssim |s-s_0|^\alpha \prod\limits_{j=1}^3\|v_j\|_{H^\alpha}.
\end{align*}
\end{itemize}
\end{proposition}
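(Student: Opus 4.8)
The plan is to pass to the Fourier side, where all of the dispersion is encoded in a single oscillatory factor. Expanding \eqref{C-KdV}, one has
$\mathcal F_k[\mathcal C(v_1,v_2,v_3)]=\sum_{k_1+k_2+k_3=k}(ik_3)^{-1}\int_{s_0}^{s}\fe^{-it\phi}\,\hat v_{1,k_1}(t)\hat v_{2,k_2}(t)\hat v_{3,k_3}(t)\,\d t$,
where the inner and outer projections $\P$ restrict the summation to $k_1+k_2\ne0$ and $k\ne0$. I would split the frequency sum into the resonant set $\Gamma_0(k)$, on which $\phi=0$ (once $k_1+k_2\ne0$ this is forced by $k_1+k_3=0$ or $k_2+k_3=0$), and the nonresonant set $\Gamma(k)$, and then use the decomposition $\Gamma(k)=\Gamma_1(k)\cup\Gamma_{21}(k)\cup\Gamma_{22}(k)$ together with Lemma \ref{lem:a4-est}. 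On $\Gamma_0(k)$ the relation $k_3=-k_1$ forces $k=k_2$ and factorizes the triple sum into a pairing of $\partial_x^{-1}v_1$ with $v_3$ times a pairing of $v_2$ with the test function; there is no oscillatory gain, but a direct Cauchy--Schwarz using the $\partial_x^{-1}$-gain controls it by $|s-s_0|\prod_j\|v_j\|_{L^2}$.

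For the time-independent statement (2) the inner integral is $\int_{s_0}^{s}\fe^{-it\phi}\,\d t$, whose modulus is at most $\min\{|s-s_0|,2|\phi|^{-1}\}\le|s-s_0|^{\alpha}|\phi|^{-(1-\alpha)}$ by interpolation. On $\Gamma(k)$ Lemma \ref{lem:a4-est}(1) gives $|\phi|\gtrsim|k_m|$, so after extracting the $H^\alpha$-weights $\langle k_j\rangle^{-\alpha}$ and the gain $|k_3|^{-1}$ from $\partial_x^{-1}$, the reduced multiplier carries $|s-s_0|^{\alpha}$ times a factor of strictly superthreshold decay: on $\Gamma_1(k)$ all four frequencies are comparable by Lemma \ref{lem:a4-est}(2), while on $\Gamma_2(k)$ the bound $|\phi|\gtrsim|k_m|^2$ supplies the surplus $|k_m|^{-2(1-\alpha)}$, which — because the two largest frequencies are comparable, as shown in Lemma \ref{lem:a4-est} — can be reassigned to the output $k$ or to $k_2$. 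Using the two-term flexibility of the hypothesis \eqref{est:mk} to choose which slots carry decay, the multiplier then fits \eqref{est:mk} with a bounded $A$, and Lemma \ref{lem:ln-loss} yields $|s-s_0|^{\alpha}\prod_j\|v_j\|_{H^\alpha}$ with no logarithmic loss.

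For statement (1) the inputs are time dependent and only $L^2$ in space, so the oscillation must be harvested by integrating by parts in $t$ on $\Gamma(k)$ through $\fe^{-it\phi}=(-i\phi)^{-1}\partial_t\fe^{-it\phi}$. This produces an interior term carrying one $\partial_t v_j$ together with a factor $|s-s_0|$, which I would estimate via $\partial_t v_j\in H^{-23/14}$ and route into the first term $|s-s_0|\prod_j\|v_j\|_{X^0([s_0,s])}$, and boundary terms at $t=s_0,s$, where the phase gain $|\phi|^{-1}$ allows one factor to be placed in $H^{-23/14}$ and which I would route into the second term $\|v_1\|_{L^\infty(s_0,s;H^{-23/14})}\|v_2\|_{L^\infty(s_0,s;L^2)}\|v_3\|_{L^\infty(s_0,s;L^2)}$. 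The permutation statement then reflects the freedom in choosing which factor absorbs the negative-order norm, and one would choose the factor carrying the largest frequency.

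The hard part will be the critical region $\Gamma_{22}(k)$. A single integration by parts buys only $|\phi|^{-1}\lesssim|k_m|^{-15/7}=|k_m|^{-30/14}$, whereas placing the highest-frequency factor in $H^{-23/14}$ costs $\langle k_m\rangle^{23/14}$; since $\tfrac{15}{7}-\tfrac{23}{14}=\tfrac12$, the crude bound $|\phi|\gtrsim|k_m|^{15/7}$ leaves exactly the residual $|k_m|^{-1/2}$, that is, precisely the logarithmically critical decay with no room to spare. To turn this into a genuine estimate I would avoid the crude bound and instead use the full factorization $\phi=3(k_0+k_1)(k_0+k_2)(k_0+k_3)$ with $k_0=-k$: on $\Gamma_{22}(k)$ even the smallest factor satisfies $|k_0+k_1|\gtrsim|k_m|^{5/7}$ (the contrapositive of the bound established in the proof of Lemma \ref{lem:a4-est}(3)), and exploiting the lower bounds on the individual factors, rather than only on their product, supplies the missing room so that \eqref{est:mk} holds with a bounded $A$ and Lemma \ref{lem:ln-loss} closes the estimate without a true logarithmic loss. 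The complementary region $\Gamma_{21}(k)$, where $|\phi|\gtrsim|k_m|^2$ but $|\phi|\ll|k_m|^{15/7}$, carries instead a small pairwise sum $|k_j+k_h|\ll|k_m|^{5/7}$ from Lemma \ref{lem:a4-est}(3), which is summed over to recover summability; the exponent $15/7$ is exactly the value at which these two mechanisms meet. Matching every case of the $\Gamma_1/\Gamma_{21}/\Gamma_{22}$ split to \eqref{est:mk}, and keeping the $\partial_x^{-1}$-variable in the $\langle k_3\rangle^{-1/2-\theta}$ slot while tracking whether it is among the two largest frequencies, is where most of the technical bookkeeping lies.
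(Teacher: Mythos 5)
Your overall architecture for part (1) matches the paper's (split into $\Gamma_0$, $\Gamma_1$, $\Gamma_{21}$, $\Gamma_{22}$, integrate by parts in time on the nonresonant high-modulation set, and track the exponent $\tfrac{23}{14}$), but there is a genuine gap in where you integrate by parts. You propose to harvest the oscillation "on $\Gamma(k)$", which includes $\Gamma_1(k)$; there the only phase lower bound is $|\phi|\gtrsim|k_m|$ (Lemma \ref{lem:a4-est}(1)), so the boundary terms at $t=s_0,s$ carry a multiplier of size only $|\phi|^{-1}|k_3|^{-1}\sim|k_m|^{-2}$ with no factor $|s-s_0|$. Such a term must then be absorbed into $\|v_1\|_{H^{-23/14}}\|v_2\|_{L^2}\|v_3\|_{L^2}$, which costs $\langle k_1^*\rangle^{23/14}$ and leaves $|k_m|^{-2+23/14}=|k_m|^{-5/14}$; squaring and summing over the $\Gamma_1(k)$ lattice points (of which there are far more than $|k_m|^{5/7}$) diverges. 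The estimate on $\Gamma_1$ must instead be done \emph{without} integration by parts, keeping the factor $|s-s_0|$ from the time integral and exploiting $|k_3|^{-1}\sim|k|^{-1}$ against the Cauchy--Schwarz count of lattice points with all four frequencies comparable; this is exactly the paper's estimate of $\mathcal C_4^*$, which lands in the first term $|s-s_0|\prod_j\|v_j\|_{X^0}$. Separately, your claim that on $\Gamma_{22}(k)$ "even the smallest factor satisfies $|k_0+k_1|\gtrsim|k_m|^{5/7}$" is not the contrapositive of Lemma \ref{lem:a4-est}(3) and is false: since the two largest factors are each $O(|k_m|)$, membership in $\Gamma_{22}$ only forces $|k_0+k_1|\gtrsim|k_m|^{1/7}$. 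Fortunately this refinement is unnecessary: after extracting $\langle k_1^*\rangle^{23/14}$ the residual multiplier on $\Gamma_{22}$ is $|k_m|^{-1/2}|k_3|^{-1}$, and a direct Cauchy--Schwarz using $\sum_{|k_2|\le|k_m|}|k_m|^{-1}\lesssim 1$ and $\sum_{k_3\ne0}|k_3|^{-2}\lesssim 1$ closes the estimate exactly at the threshold with no logarithm and no appeal to Lemma \ref{lem:ln-loss} (which, as you correctly sense, would require an unbounded $A$ here).

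Your part (2) is a legitimately different route from the paper's: you interpolate $\bigl|\int_{s_0}^s\fe^{-it\phi}\,\d t\bigr|\le|s-s_0|^{\alpha}|\phi|^{-(1-\alpha)}$ pointwise in frequency and then verify a multiplier bound, whereas the paper splits $\mathcal C_5^*$ into frequencies below and above $|s-s_0|^{-\alpha}$ and applies Bernstein's inequality on each piece together with the $H^{\sigma}$-family of bounds already proved for the boundary terms. Your version avoids the low/high split and, on $\Gamma_1$ and $\Gamma_2$, the multiplier does fit \eqref{est:mk} with bounded $A$ (so no logarithmic loss); combined with the direct treatment of $\Gamma_0$ it gives the stated $|s-s_0|^{\alpha}\prod_j\|v_j\|_{H^\alpha}$. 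This part of the proposal is sound.
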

\begin{proof} 
Clearly, the trilinear operator defined in \eqref{C-KdV} does not have zero mode, i.e., 
$$ \mathcal{F}_0[\mathcal C(v_1,v_2,v_3)]=0 . $$
For $k\ne 0$, the Fourier transform of \eqref{C-KdV} can be written as  
\begin{align*}
\mathcal{F}_k\big[\mathcal C(v_1,v_2,v_3)\big]
=
\int_{s_0}^s \sum\limits_{\substack{k_1+k_2+k_3=k\\ k_1+k_2\ne0,k_3\ne0}}\fe^{-it\phi} (ik_3)^{-1} 
\hat v_{k_1}(t)\hat v_{k_2}(t)\hat v_{k_3}(t)\,\d t
\end{align*}
For the simplicity of notation, 
we assume that $\hat v_{j,k_j}\ge 0$ for $k_j\in\Z$ and $j=1,2,3$ (otherwise one can replace $\hat v_{j,k_j}$ by $|\hat v_{j,k_j}|$ in the following argument and consider the functions $\tilde v_j := \mathcal{F}_{k}^{-1} [\,|\hat v_{j,k}|\,]$ as in the proof of Lemma \ref{lem:ln-loss}). 

Since $\Gamma(k)=\Gamma_1(k)\cup \Gamma_2(k)$, we can further decompose $\mathcal{F}_k\big[\mathcal C(v_1,v_2,v_3)\big]$ into the following several parts:  
\begin{align*}
\mathcal{F}_k\big[\mathcal C(v_1,v_2,v_3)\big]
=
&
\int_{s_0}^s \sum\limits_{\substack{k_1+k_2+k_3=k,k_3\ne0\\ k_1+k_2\ne0, k_1+k_3=0}}\fe^{-it\phi} (ik_3)^{-1} 
\hat v_{1,k_1}(t)\hat v_{2,k_2}(t)\hat v_{3,k_3}(t)\,\d t\\
&+
\int_{s_0}^s \sum\limits_{\substack{k_1+k_2+k_3=k,k_3\ne0\\ k_1+k_2\ne0, k_2+k_3=0}}\fe^{-it\phi} (ik_3)^{-1} 
\hat v_{1,k_1}(t)\hat v_{2,k_2}(t)\hat v_{3,k_3}(t)\,\d t\\
&-
\int_{s_0}^s \sum\limits_{\substack{k_1+k_2+k_3=k,k_3\ne0\\ k_1+k_2\ne0\\
k_1+k_3=k_2+k_3=0}}\fe^{-it\phi} (ik_3)^{-1} 
\hat v_{1,k_1}(t)\hat v_{2,k_2}(t)\hat v_{3,k_3}(t)\,\d t\\
&+
\int_{s_0}^s \sum\limits_{\substack{(k_1,k_2,k_3)\in\Gamma_1(k)\\ k_3\ne0}} \fe^{-it\phi} (ik_3)^{-1} 
\hat v_{1,k_1}(t)\hat v_{2,k_2}(t)\hat v_{3,k_3}(t)\,\d t\\
&+
\int_{s_0}^s \sum\limits_{\substack{(k_1,k_2,k_3)\in\Gamma_2(k)\\ k_3\ne0}} \fe^{-it\phi} (ik_3)^{-1} 
\hat v_{1,k_1}(t)\hat v_{2,k_2}(t)\hat v_{3,k_3}(t)\,\d t\\
=
&\!: 
\sum\limits_{j=1}^5 \mathcal{F}_k\big[\mathcal C^*_j(v_1,v_2,v_3)\big].
\end{align*}
We present estimates for $\mathcal C^*_j(v_1,v_2,v_3)$, $j=1,2,3,4,5$, respectively. 

\emph{\it (i) Estimates for $\mathcal C^*_1(v_1,v_2,v_3)$, $\mathcal C^*_2(v_1,v_2,v_3)$ and $\mathcal C^*_3(v_1,v_2,v_3)$}: 
Since $k_3=-k_1\ne0$ and $k_2=k$ in the expression of $\mathcal C^*_1(v_1,v_2,v_3)$, it follows that (by the Cauchy--Schwarz  inequality) 
\begin{align*}
\big|\mathcal{F}_k\big[\mathcal C^*_1(v_1,v_2,v_3)\big]\big|
\lesssim &
\int_{s_0}^s  \sum\limits_{k_1\ne0}|k_1|^{-1}
\hat v_{1,k_1}(t)\> \hat v_{2,k}(t)\>\hat v_{3,-k_1}(t)\,\d t\\
\lesssim &
\int_{s_0}^s   \hat v_{2,k}(t) \|v_1(t)\|_{L^2}\|v_3(t)\|_{L^2}\,\d t.
\end{align*}
This implies that
\begin{align*}
\big\|\mathcal C^*_1(v_1,v_2,v_3)\big\|_{L^2} 
\lesssim 
|s-s_0|  \|v_1\|_{L^\infty(s_0,s;L^2)} \|v_2\|_{L^\infty(s_0,s;L^2)}\|v_3\|_{L^\infty(s_0,s;L^2)} .
\end{align*}
Since $\mathcal C^*_2(v_1,v_2,v_3)$ is similar as $\mathcal C^*_1(v_1,v_2,v_3)$, and the expression of $\mathcal C^*_3(v_1,v_2,v_3)$ consists of terms which are contained in $\mathcal C^*_1(v_1,v_2,v_3)$, 
the same estimates hold for $\mathcal C^*_2(v_1,v_2,v_3)$ and $\mathcal C^*_3(v_1,v_2,v_3)$, i.e., 
\begin{align}\label{Estimate-C123}
&\big\|\mathcal C^*_1(v_1,v_2,v_3)\big\|_{L^2} 
+\big\|\mathcal C^*_2(v_1,v_2,v_3)\big\|_{L^2} + \big\|\mathcal C^*_3(v_1,v_2,v_3)\big\|_{L^2} \notag\\ 
&\lesssim 
|s-s_0|  \|v_1\|_{L^\infty(s_0,s;L^2)} \|v_2\|_{L^\infty(s_0,s;L^2)}\|v_3\|_{L^\infty(s_0,s;L^2)} .
\end{align}

\emph{\it (ii) Estimates for $\mathcal C^*_4(v_1,v_2,v_3)$}:  
Since $|k_1|\sim|k_2|\sim|k_3|\sim |k|$ for $(k_1,k_2,k_3)\in \Gamma_1(k)$, it follows that 
\begin{align*}
\big|\mathcal{F}_k\big[\mathcal C^*_4(v_1,v_2,v_3)\big]\big|
\lesssim &
\int_{s_0}^s \sum\limits_{\substack{\Gamma_1(k)\\ k_3\ne0}} |k_3|^{-1} 
\hat v_{1,k_1}(t)\hat v_{2,k_2}(t)\hat v_{3,k_3}(t)\,\d t \\
\lesssim &
\int_{s_0}^s |k|^{-1}  \sum\limits_{\substack{k_1+k_2+k_3=k\\|k_1|\sim|k_2|\sim|k_3|\sim |k| }}
\hat v_{1,k_1}(t)\hat v_{2,k_2}(t)\hat v_{3,k_3}(t)\,\d t\\ 
\lesssim &
\int_{s_0}^s  |k|^{-1}  
\Big(\sum\limits_{\substack{|k_1|\sim |k| \\ |k_2| \sim |k|}}1 \Big)^\frac12
\Big(\sum\limits_{\substack{|k_1|\sim |k| \\ |k_2| \sim |k|}}
\big||\hat v_{1,k_1}(t)|^2|\hat v_{2,k_2}(t)|^2 \>|\hat{v}_{3,k-k_1- k_2}(t)|^2 \Big)^\frac12\,\d t \\
\lesssim &
\int_{s_0}^s \Big(\sum\limits_{k_1,k_2}
\big||\hat v_{1,k_1}(t)|^2|\hat v_{2,k_2}(t)|^2 \>|\hat{v}_{3,k-k_1- k_2}(t)|^2 \Big)^\frac12\,\d t .
\end{align*}
Therefore,  
\begin{align}\label{Estimate-C4}
\big\|\mathcal C^*_4(v_1,v_2,v_3)\big\|_{L^2} 
&\lesssim 
\Big(\sum_{k\in\Z} \big|\mathcal{F}_k\big[\mathcal C^*_4(v_1,v_2,v_3)\big]\big|^2 \Big)^{\frac12} \notag\\
&\lesssim
\int_{s_0}^s \Big(\sum\limits_{k} \sum\limits_{k_1,k_2}
|\hat v_{1,k_1}(t)|^2|\hat v_{2,k_2}(t)|^2 \>|\hat v_{3,k-k_1- k_2}(t)|^2 \Big)^\frac12\,\d t \notag\\ 
&\lesssim
|s-s_0|  \|v_1\|_{L^\infty(s_0,s;L^2)} \|v_2\|_{L^\infty(s_0,s;L^2)}\|v_3\|_{L^\infty(s_0,s;L^2)} .
\end{align}

We see that $\mathcal C^*_j(v_1,v_2,v_3)$, $j=1,2,3,4$ satisfy the estimates in both (1) and (2). It remains to show that $\mathcal C^*_5(v_1,v_2,v_3)$ also satisfies the estimates in (1) and (2).\bigskip

{\it Proof of {\rm(1)}.}
Via integration by parts, we can write $\mathcal{F}_k\big[\mathcal C^*_5(v_1,v_2,v_3)\big]$ as 
\begin{align}
\mathcal{F}_k\big[\mathcal C^*_5(v_1,v_2,v_3)\big]
=&\sum\limits_{\substack{\Gamma_{2}(k)\\ k_3\ne0}} \fe^{-it\phi} \frac1{k_3 \phi}
\hat v_{1,k_1}(t)\hat v_{2,k_2}(t)\hat v_{3,k_3}(t)\Big|_{s_0}^{s}\notag\\
& - \sum\limits_{\substack{\Gamma_{2}(k)\\ k_3\ne0}} \int_{s_0}^s\fe^{-it\phi} \frac1{k_3 \phi}
\partial_t\big(\hat v_{1,k_1}(t)\hat v_{2,k_2}(t)\hat v_{3,k_3}(t)\big)\,\d s . \label{C4-12}
\end{align}
We further decompose $\Gamma_2(k)$ into two parts, i.e., $\Gamma_2(k)=\Gamma_{21}(k)\cup\Gamma_{22}(k) $, where 
\begin{align*}
\Gamma_{21}(k)&:=\{(k_1,k_2,k_3)\in \Gamma: |k_m|^2 \lesssim  |\phi|\ll |k_m|^\frac{15}{7}\} , \\ 
\Gamma_{22}(k)&:=\{(k_1,k_2,k_3)\in \Gamma:  |\phi|\gtrsim  |k_m|^\frac{15}{7}\}.
\end{align*}
Then \eqref{C4-12} can be written as   
\begin{align*}
\mathcal{F}_k\big[\mathcal C^*_5(v_1,v_2,v_3)\big]
= &\sum\limits_{\substack{\Gamma_{21}(k)\\ k_3\ne0}} \fe^{-it\phi} \frac1{k_3 \phi}
\hat v_{1,k_1}(t)\hat v_{2,k_2}(t)\hat v_{3,k_3}(t)\Big|_{s_0}^{s} \notag\\
& - \sum\limits_{\substack{\Gamma_{22}(k)\\ k_3\ne0}}\fe^{-it\phi} \frac1{k_3 \phi}
\hat v_{1,k_1}(t)\hat v_{2,k_2}(t)\hat v_{3,k_3}(t)\Big|_{s_0}^{s}\notag\\
& - \sum\limits_{\substack{\Gamma_{21}(k)\\ k_3\ne0}}\int_{s_0}^s\fe^{it\phi} \frac1{k_3 \phi}
\partial_t\big(\hat v_{1,k_1}(t)\hat v_{2,k_2}(t)\hat v_{3,k_3}(t)\big)\,\d s \notag\\
& - \sum\limits_{\substack{\Gamma_{22}(k)\\ k_3\ne0}} \int_{s_0}^s\fe^{it\phi} \frac1{k_3 \phi}
\partial_t\big(\hat v_{1,k_1}(t)\hat v_{2,k_2}(t)\hat v_{3,k_3}(t)\big)\,\d s\\
= &\!: 
\sum\limits_{j=1}^4 \mathcal{F}_k\big[\mathcal C^*_{5j}(v_1,v_2,v_3)\big].
\end{align*}

\emph{Estimates for $\mathcal C^*_{51}(v_1,v_2,v_3)$}: 
In the expression of $\mathcal C^*_{51}(v_1,v_2,v_3)$, we have $ (k_1,k_2,k_3)\in \Gamma_{21}(k)$. 
According to Lemma \ref{lem:a4-est}, for $ (k_1,k_2,k_3)\in \Gamma_{21}(k)$ there exist $j,h\in \{1,2,3\}$, such that 
$$
|k_j+k_h|\ll |k_m|^{\frac57} . 
$$
Without loss of generality, we may assume that $|k_2+k_3|\ll |k_m|^{\frac57}$, as the other cases can be treated similarly. 
Since $|\phi|\gtrsim |k_m|^{2}$ on $\Gamma_{21}(k)$, by using a change of variables $\tilde k_2=k_2+k_3$ and the Cauchy--Schwarz inequality, we obtain 
\begin{align*}
&\hspace{-10pt} \big| \mathcal{F}_k\big[\mathcal C^*_{51}(v_1,v_2,v_3)\big]\big| \\
\lesssim  &\max\limits_{t\in \{s_0,s\}} \sum\limits_{\substack{k_1+k_2+k_3=k\\|k_2+k_3|\ll   |k_m|^{\frac57},|k_3|\ne0 }}|k_m|^{-2}|k_3|^{-1}
\hat v_{1,k_1}(t)\hat v_{2,k_2}(t)\hat v_{3,k_3}(t)\\
\lesssim &
   \sum\limits_{|k_3|\ne0} \sum\limits_{|\tilde k_2|\ll   |k_m|^{\frac57}}  |k_m|^{-2}|k_3|^{-1}
\mathcal{F}_{k-\tilde k_2}[v_1(t)]\>\mathcal{F}_{\tilde k_2-k_3}[v_2(t)] \mathcal{F}_{k_3}[v_3(t)] \\
\lesssim &
 \Big(\sum\limits_{|\tilde k_2|\ll   |k_m|^{\frac57} , k_3\ne0} |k_m|^{-\frac{5}{7}}|k_3|^{-2} \Big)^\frac12
 \Big(\sum\limits_{\tilde k_2, k_3} |k_m|^{-\frac{23}{7}} |\mathcal{F}_{k-\tilde k_2}[v_1(t)]|^2 |\mathcal{F}_{\tilde k_2-k_3}[v_2(t)] |^2 |\mathcal{F}_{k_3}[v_3(t)]|^2 \Big)^\frac12 \\
\lesssim &
 \Big(\sum\limits_{k_1+ k_2+ k_3=k} |k_m|^{-\frac{23}{7}}|\mathcal{F}_{k_1}[v_1(t)]|^2 |\mathcal{F}_{k_2}[v_2(t)] |^2 |\mathcal{F}_{k_3}[v_3(t)]|^2\Big)^\frac12 ,
\end{align*}
where we have changed the subscripts back to $k_1$, $k_2$ and $k_3$ in the last inequality. 
For $k\ne 0$ and $k_1+k_2+k_3=k$, it is easy to verify that $|k_m|=\max(|k_1|,|k_2|,|k_3|)\ge \langle k_j\rangle$ for $j=1,2,3$. 
By taking square of the inequality above and summing up the results for $k\in\Z$ such that $k\ne0$, using the property that $\mathcal{F}_0[ \mathcal C^*_{51}(v_1,v_2,v_3)]=0$, we obtain
\begin{align}\label{est:C41-1}
\big\| \mathcal C^*_{51}(v_1,v_2,v_3) \big\|_{H^\sigma}^2 
\lesssim &
\sum_{k\ne0}  |k|^{2\sigma} \sum\limits_{k_1+ k_2+ k_3=k} |k_m|^{-\frac{23}{7}}|\mathcal{F}_{k_1}[v_1(t)]|^2 |\mathcal{F}_{k_2}[v_2(t)] |^2 |\mathcal{F}_{k_3}[v_3(t)]|^2 \notag\\
\lesssim &
\sum\limits_{k_1,k_2,k_3}
\big|\langle k_1\rangle^{\sigma_1}\mathcal{F}_{k_1}[v_1(t)]\big|^2 \big|\langle k_2\rangle^{\sigma_2}\mathcal{F}_{k_2}[v_2(t)] \big|^2 \big|\langle k_3\rangle^{\sigma_3}\mathcal{F}_{k_3}[v_3(t)]\big|^2 \notag\\
\lesssim &
\|v_1\|_{L^\infty(s_0,s;H^{\sigma_1})}^2 \|v_2\|_{L^\infty(s_0,s;H^{\sigma_2})}^2 \|v_3\|_{L^\infty(s_0,s;H^{\sigma_3})}^2 ,
\end{align}
where $0\le \sigma\le \frac{23}{14}$ and $-\frac{23}{14} \le \sigma_j \le 0 $, $j=1,2,3$, are any numbers satisfying $\sigma-\sigma_1-\sigma_2-\sigma_3=\frac{23}{14}$.

\emph{Estimates for $\mathcal C^*_{52}(v_1,v_2,v_3)$}: 
Since $|\phi|\gtrsim |k_m|^{\frac{15}{7}}$ for $(k_1,k_2,k_3)\in\Gamma_{22}(k)$, it follows that, by the Cauchy--Schwarz inequality, 
\begin{align*}
&\hspace{-10pt} \big| \mathcal{F}_k\big(\mathcal C^*_{52}(v_1,v_2,v_3)\big)\big| \\ 
\lesssim  &\max\limits_{t\in \{s_0,s\}} \sum\limits_{\substack{k_1+k_2+k_3=k\\ |k_3|\ne0}}|k_m|^{-\frac{15}{7}}|k_3|^{-1}
\mathcal{F}_{k_1}[v_1(t)]\mathcal{F}_{k_2}[v_2(t)]\mathcal{F}_{k_3}[v_3(t)]\\
\lesssim &
\Big(\sum\limits_{|k_2|\le|k_m|, k_3\ne0} |k_m|^{-1}|k_3|^{-2} \Big)^\frac12 
 \Big(\sum\limits_{|k_2|\le|k_m|,k_3\ne0} |k_m|^{-\frac{23}{7}}
  \big|\mathcal{F}_{k-k_2-k_3}[v_1(t)]\big|^2 \big|\mathcal{F}_{k_2}[v_2(t)]\big|^2
\big|\mathcal{F}_{k_3}[v_1(t)]\big|^2\Big)^\frac12 \\
\lesssim &
 \Big(\sum\limits_{k_1+k_2+k_3=k} |k_m|^{-\frac{23}{7}}
  \big|\mathcal{F}_{k_1}[v_1(t)]\big|^2 \big|\mathcal{F}_{k_2}[v_2(t)]\big|^2
\big|\mathcal{F}_{k_3}[v_1(t)]\big|^2\Big)^\frac12 , 
\end{align*}
where we have changed the subscripts back to $k_1$, $k_2$ and $k_3$ in the last inequality. By taking square of the inequality above and summing up the results for $k\in\Z$ such that $k\ne0$, using the property that $\mathcal{F}_0[ \mathcal C^*_{52}(v_1,v_2,v_3)]=0$, we obtain 
\begin{align}\label{est:C42-1}
\big\|\mathcal C^*_{52}(v_1,v_2,v_3)\big\|_{H^\sigma}
\lesssim  &
\|v_1\|_{L^\infty(s_0,s;H^{\sigma_1})}  \|v_2\|_{L^\infty(s_0,s;H^{\sigma_2})}  \|v_3\|_{L^\infty(s_0,s;H^{\sigma_3})} ,
\end{align}
where $0\le \sigma\le \frac{23}{14}$ and $-\frac{23}{14} \le \sigma_j \le 0 $, $j=1,2,3$, are any numbers satisfying $\sigma-\sigma_1-\sigma_2-\sigma_3=\frac{23}{14}$. 

In particular, by choosing $\sigma=\sigma_2=\sigma_3=0$ and $\sigma_1=-\frac{23}{14}$ in \eqref{est:C41-1} and \eqref{est:C42-1}, we obtain the following result:  
\begin{align}\label{est:C412-1}
&\big\|\mathcal C^*_{51}(v_1,v_2,v_3)\big\|_{L^2} + \big\|\mathcal C^*_{52}(v_1,v_2,v_3)\big\|_{L^2} \notag\\ 
&\lesssim \|v_1\|_{L^\infty(s_0,s;H^{-\frac{23}{14}})}  \|v_2\|_{L^\infty(s_0,s;L^2)}  \|v_3\|_{L^\infty(s_0,s;L^2)} .
\end{align}

Since the constraint $\sigma-\sigma_1-\sigma_2-\sigma_3=\frac{23}{14}$ is symmetric about $\sigma_1$, $\sigma_2$ and $\sigma_3$, it follows that the supscripts $\sigma_1$, $\sigma_2$ and $\sigma_3$ can be permuted in the right-hand sides of  \eqref{est:C41-1} and \eqref{est:C42-1}. Therefore, the estimate in \eqref{est:C412-1} still holds when $v_1$, $v_2$ and $v_3$ are permuted. 

\emph{Estimates for $\mathcal C^*_{53}(v_1,v_2,v_3)$ and $\mathcal C^*_{54}(v_1,v_2,v_3)$}:  
Similar as $\mathcal C^*_{51}(v_1,v_2,v_3)$ and $\mathcal C^*_{52}(v_1,v_2,v_3)$, the following result holds:  
\begin{align*} 
&\|\mathcal C^*_{53}(v_1,v_2,v_3)\|_{H^{\sigma}} + \|\mathcal C^*_{54}(v_1,v_2,v_3)\|_{H^{\sigma}} \\ 
&\lesssim  
|s-s_0|\Big( \|\partial_t v_1\|_{L^\infty(s_0,s;H^{\sigma_1})}  \|v_2\|_{L^\infty(s_0,s;H^{\sigma_2})}  \|v_3\|_{L^\infty(s_0,s;H^{\sigma_3})} \\
&\qquad\qquad\quad\,\, + \| v_1\|_{L^\infty(s_0,s;H^{\sigma_2})}  \|\partial_tv_2\|_{L^\infty(s_0,s;H^{\sigma_1})}  \|v_3\|_{L^\infty(s_0,s;H^{\sigma_3})}  \\
&\qquad\qquad\quad\,\, +  \| v_1\|_{L^\infty(s_0,s;H^{\sigma_3})}  \|v_2\|_{L^\infty(s_0,s;H^{\sigma_2})}  \|\partial_tv_3\|_{L^\infty(s_0,s;H^{\sigma_1})}  \Big) ,
\end{align*}
where $0\le \sigma\le \frac{23}{14}$ and $-\frac{23}{14} \le \sigma_j \le 0 $, $j=1,2,3$, are any numbers satisfying $\sigma-\sigma_1-\sigma_2-\sigma_3=\frac{23}{14}$.
In particular, by choosing $\sigma_1=-\frac{23}{14}$ and $\sigma=\sigma_2=\sigma_3=0$, we obtain 
\begin{align}\label{est:C434-1}
&\|\mathcal C^*_{53}(v_1,v_2,v_3)\|_{L^2} + \|\mathcal C^*_{54}(v_1,v_2,v_3)\|_{L^2} \notag\\ 
&\lesssim |s-s_0|
\Big( \|\partial_t v_1\|_{L^\infty(s_0,s;H^{-\frac{23}{14}})}  \|v_2\|_{L^\infty(s_0,s;L^2)}  \|v_3\|_{L^\infty(s_0,s;L^2)} \notag \\
&\qquad\qquad\quad\,\,  + 
\|v_1\|_{L^\infty(s_0,s;L^2)}  \|\partial_t v_2\|_{L^\infty(s_0,s;H^{-\frac{23}{14}})}  \|v_3\|_{L^\infty(s_0,s;L^2)}  \notag \\
&\qquad\qquad\quad\,\,  + 
\|v_1\|_{L^\infty(s_0,s;L^2)}  \|v_2\|_{L^\infty(s_0,s;L^2)}  \|\partial_t v_3\|_{L^\infty(s_0,s;H^{-\frac{23}{14}})}
\Big). 
\end{align}


Overall, the estimates of $\mathcal C^*_j(v_1,v_2,v_3)$, $j=1,2,3,4$, in \eqref{Estimate-C123} and \eqref{Estimate-C4}, and the estimates of $\mathcal C^*_{5j}(v_1,v_2,v_3)$, $j=1,2,3,4$, in \eqref{est:C412-1} and \eqref{est:C434-1}, imply the first result of Proposition \ref{lem:tri-linear}.\medskip

{\it Proof of {\rm(2)}.}
We further decompose $\mathcal C^*_{5}(v_1,v_2,v_3)$ into the low-frequency and high-frequency parts as follows: 
\begin{align}\label{C5-decompose}
\mathcal C^*_{5}(v_1,v_2,v_3)=\P_{\le |s-s_0|^{-\alpha}}\mathcal C^*_{5}(v_1,v_2,v_3)+\P_{> |s-s_0|^{-\alpha}}\mathcal C^*_{5}(v_1,v_2,v_3) , 
\end{align} 
where we have used the projections $\P_{\le N}$ and $\P_{>N}$ defined in Section \ref{section:projection} with $N=|s-s_0|^{-\alpha}$. 

The first part in \eqref{C5-decompose} can be estimated by using Bernstein's inequality in \eqref{Bernstein}, which converts the $L^2$ norm to the $H^{-b(\alpha)}$ norm, i.e., 
\begin{align*} 
\big\|\P_{\le |s-s_0|^{-\alpha}}\mathcal C^*_{5}(v_1,v_2,v_3)\big\|_{L^2} 
\lesssim 
|s-s_0|^{-b(\alpha)\alpha}\big\|\mathcal C^*_{5}(v_1,v_2,v_3)\big\|_{H^{-b(\alpha)}} ,
\end{align*}
where $b(\alpha)$ is chosen in the following way:  
$$
b(\alpha)
= \left\{\begin{aligned}
&\frac12+ && \mbox{when } \alpha=0 \\
&\frac12-\alpha && \mbox{when } \alpha\in \Big(0,\frac12 \Big] \\ 
&0 && \mbox{when } \alpha\in \Big(\frac12,1 \Big] .
\end{aligned}\right. 
$$
For time-independent functions $v_j$, $j=1,2,3$, it is straightforward to verify (by the Cauchy--Schwartz inequality and Sobolev embedding inequalities) that this choice of $b(\alpha)$ guarantees the following inequality: 
\begin{align*}
\big\|\mathcal C^*_{5}(v_1,v_2,v_3)\big\|_{H^{-b(\alpha)}} 
= &\, \bigg\|\int_{s_0}^s \sum\limits_{\substack{\Gamma_2(k)\\ k_3\ne0}} \fe^{-it\phi} (ik_3)^{-1} 
\hat v_{1,k_1}\hat v_{2,k_2}\hat v_{3,k_3}\,\d t\bigg\|_{H^{-b(\alpha)}} \notag\\ 
\lesssim &\, 
|s-s_0|   \|v_1\|_{H^\alpha} \|v_2\|_{H^\alpha}\|v_3\|_{H^\alpha}.
\end{align*}
Combining the two inequalities above, we obtain 
\begin{align*} 
\big\|\P_{\le |s-s_0|^{-\alpha}}\mathcal C^*_5(v_1,v_2,v_3)\big\|_{L^2} 
\lesssim &
|s-s_0|^{1-b(\alpha)\alpha}  \|v_1\|_{H^\alpha} \|v_2\|_{H^\alpha}\|v_3\|_{H^\alpha}.
\end{align*}
Since $1-b(\alpha)\alpha\ge \alpha$, it follows that 
\begin{align}\label{PsmallC5}
\big\|\P_{\le |s-s_0|^{-\alpha}}\mathcal C^*_5(v_1,v_2,v_3)\big\|_{L^2} 
\lesssim 
|s-s_0|^{\alpha}  \|v_1\|_{H^\alpha} \|v_2\|_{H^\alpha}\|v_3\|_{H^\alpha}.
\end{align}

For time-independent functions $v_1$, $v_2$ and $v_3$, we have $\mathcal C^*_{53}(v_1,v_2,v_3)=\mathcal C^*_{54}(v_1,v_2,v_3)=0$ (as they contain the time derivatives of the functions $v_1$, $v_2$ and $v_3$). 
Therefore, the second part in \eqref{C5-decompose} can be estimated by using the decomposition 
$$C^*_{5}(v_1,v_2,v_3)=C^*_{51}(v_1,v_2,v_3)+C^*_{52}(v_1,v_2,v_3) . $$ 
According to Bernstein's inequality, as shown in \eqref{Bernstein}, we have 
\begin{align*}
\|\P_{> |s-s_0|^{-\alpha}}\mathcal C^*_{5j}(v_1,v_2,v_3)\|_{L^2} 
\lesssim &\, 
|s-s_0|^{\frac{23}{14}\alpha}\|\mathcal C^*_{5j}(v_1,v_2,v_3)\|_{H^{\frac{23}{14}}}\notag\\
\lesssim &\, 
|s-s_0|^{\alpha} \|v_1\|_{L^2} \|v_2\|_{L^2}\|v_3\|_{L^2}
\quad\mbox{for}\,\,\, j=1,2 , 
\end{align*} 
where the last inequality follows from \eqref{est:C41-1} and \eqref{est:C42-1} with $\sigma=\frac{23}{14}$ and $\sigma_1=\sigma_2=\sigma_3=0$. This implies that 
\begin{align}\label{PlargeC5}
\|\P_{> |s-s_0|^{-\alpha}}\mathcal C^*_{5}(v_1,v_2,v_3)\|_{L^2} 
\lesssim &\, 
|s-s_0|^{\alpha} \|v_1\|_{L^2} \|v_2\|_{L^2}\|v_3\|_{L^2} . 
\end{align} 
 
Combining \eqref{PsmallC5} and \eqref{PlargeC5}, we obtain the second result of Proposition \ref{lem:tri-linear}. 
\end{proof}

\vskip 1.5cm
\section{Construction of the low-regularity integrator}\label{sec:numer-method}
\vskip .5cm

For the simplicity of notation, we decompose the phase function $\phi=k^3-k_1^3-k_2^3-k_3^3$ into the following two parts: 
$$
\phi=\phi_{1}+\phi_{2},
$$
where
\begin{align} 
&\phi_{1}:=(k_1+k_2)^3-k_1^3-k_2^3=3k_1k_2(k_1+k_2) , \label{def:phi-1}\\
&\phi_{2}:=k^3-k_3^3-(k_1+k_2)^3=3kk_3(k_1+k_2). \label{def:phi-2}
\end{align}


Since $\P_0u^0=0$ (as assumed in Theorem \ref{main:thm1} and explained in Remark \ref{Remark-THM}), the conservation law $\int_\T u(t,x)\d x=\int_\T u^0(x)\d x$ of the KdV equation implies that 
$\P_0u(t,\cdot)=0$ for all $t\ge 0$. As a result, the twisted function $v(t,\cdot):= \fe^{t\partial_x^3}u(t,\cdot)$ also satisfies $\P_0v(t,\cdot)=0$ for all $t\ge 0$, and the KdV equation in \eqref{model} can be written as 
\begin{equation}\label{vt eq}
  \partial_t v(t,x)=\frac{1}{2}\fe^{t\partial_x^3}
  \partial_x\big[\fe^{-t\partial_x^3}v(t,x)\big]^2,\quad
  t\geq0\,\,\,\mbox{and}\,\,\,x\in\bT.
\end{equation}
We denote $v(t)=v(t,\cdot)$ for abbreviation. 

Let $t_n=n\tau$, $n=0,1,\dots,N=T/\tau$ be a partition of the time interval $[0,T]$ with stepsize $\tau$. 
Then the solution of \eqref{vt eq} can be expressed in terms of the Newton--Leibniz formula, i.e., 
\begin{align}
  v(s)=&\,
  v(t_n)+\frac{1}{2}\int_{t_n}^s\fe^{t\partial_x^3}
  \partial_x\left(\fe^{-t\partial_x^3}v(t)\right)^2 \d t \notag \\
  =&\, 
  v(t_n)+ F^n[s;v(t_n)] +r_n(s)
  \qquad\mbox{for}\,\,\, s\in[t_n,t_{n+1}] ,  \label{v-increm} 
\end{align}
where 
\begin{align}
F^n[s;v(t_n)]  
:\!&= \frac{1}{2}\int_{t_n}^s\fe^{t\partial_x^3}
  \partial_x\big(\fe^{-t\partial_x^3}v(t_n)\big)^2\d t \label{def-Fn-1}  \\
  &=\frac16  \fe^{s\partial_x^3} \P\Big[\Big(\fe^{-s\partial_x^3}\partial_x^{-1}v(t_n)\Big)^2\Big]
-\frac16 \fe^{t_n\partial_x^3} \P\Big[\Big(\fe^{-t_n\partial_x^3}\partial_x^{-1}v(t_n)\Big)^2\Big] ,
  \label{def-Fn} \\[5pt] 
r_n(s) :\!&=\frac{1}{2}\int_{t_n}^s\fe^{t\partial_x^3}
  \partial_x\left[\fe^{-t\partial_x^3}\big(v(t)-v(t_n)\big)\cdot\fe^{-t\partial_x^3}\big(v(t)+v(t_n)\big)\right]\d t ,
  \label{def:rn}
\end{align}
where the expression in \eqref{def-Fn} is given by Lemma \ref{lem:1-form}.
By using the Newton--Leibniz formula again, i.e., 
\begin{align}\label{est:v-n}
v(t_{n+1})=&\, v(t_n)+\frac{1}{2}\int_{t_n}^{t_{n+1}}\fe^{s\partial_x^3}
\partial_x\left(\fe^{-s\partial_x^3}v(s)\right)^2 \d s , 
\end{align}
and substituting expression \eqref{v-increm} into \eqref{est:v-n}, we obtain 
\begin{align}
  v(t_{n+1})=&\,
  v(t_n)+\frac{1}{2}\int_{t_n}^{t_{n+1}}\fe^{s\partial_x^3}
  \partial_x\left(\fe^{-s\partial_x^3}\big[v(t_n)+F^n[s;v(t_n)]+r_n(s)\big]\right)^2\d s \notag\\
    =&\,
    v(t_n)
    +F^n\big[t_{n+1};v(t_n)\big]
    +A^n[v(t_n)]
    +R^n_1[v],\label{v-formu-1}
\end{align}
where $A^n[v(t_n)]$ and the remainder $R^n_1[v]$ are defined by 
\begin{align}
A^n[v(t_n)]:=&\,
\int_{t_n}^{t_{n+1}}\fe^{s\partial_x^3} \partial_x\left(\fe^{-s\partial_x^3}v(t_n)\> \fe^{-s\partial_x^3}F^n[s;v(t_n)]\right)\d s,\label{def-An-1} \\
R^n_1[v]:=&\,
\frac{1}{2}\int_{t_n}^{t_{n+1}}\fe^{s\partial_x^3}
  \partial_x\left(\fe^{-s\partial_x^3}F^n[s;v(t_n)]\right)^2 \d s \notag\\
 & +\frac{1}{2}\int_{t_n}^{t_{n+1}}\fe^{s\partial_x^3}
  \partial_x\left(\fe^{-s\partial_x^3}r_n(s)\> 
  \fe^{-s\partial_x^3}\big[2v(t_n)+2F^n[s;v(t_n)]+r_n(s)\big]\right) \d s.
\end{align}

The third term on the right-hand side of \eqref{v-formu-1} can be calculated by using the integration-by-parts formula in Lemma \ref{lem:1-form}, i.e.,   
\begin{align}\label{expression-A}
A^n[v(t_n)]
&=\frac13 \fe^{t_{n+1}\partial_x^3}  \P \left(\fe^{-t_{n+1}\partial_x^3}\partial_x^{-1}v(t_n)\> \fe^{-t_{n+1}\partial_x^3}\partial_x^{-1} F^n\big[t_{n+1}; v(t_n)\big]\right)
\notag \\
&\quad\, - \frac13 \int_{t_n}^{t_{n+1}}  \fe^{s\partial_x^3}
  \P \Big(\fe^{-s\partial_x^3} \partial_x^{-1} v(t_n)\>\fe^{-s\partial_x^3} \partial_x^{-1} \partial_s F^n\big[s;v(t_n)\big]\>\Big)\d s \notag \\
&=\frac13 \fe^{t_{n+1}\partial_x^3}  \P \left(\fe^{-t_{n+1}\partial_x^3}\partial_x^{-1}v(t_n)\> \fe^{-t_{n+1}\partial_x^3}\partial_x^{-1} F^n\big[t_{n+1}; v(t_n)\big]\right)
\notag \\
&\quad\, - \frac16 \int_{t_n}^{t_{n+1}}  \fe^{s\partial_x^3}
  \P \Big(\fe^{-s\partial_x^3} \partial_x^{-1} v(t_n)\>  
\P \big[\big(\fe^{-s\partial_x^3}v(t_n)\big)^2\big]\>\Big)\d s \notag \\
&=\frac13 \fe^{t_{n+1}\partial_x^3}  \P \left(\fe^{-t_{n+1}\partial_x^3}\partial_x^{-1}v(t_n)\> \fe^{-t_{n+1}\partial_x^3}\partial_x^{-1} F^n\big[t_{n+1}; v(t_n)\big]\right)
\notag \\
&\quad\, +  \frac16 \tau \partial_x^{-1} v(t_n)\>   \P_{0}\big[v(t_n)^2\big]  + B^n[v(t_n)],
\end{align}
where 
$$
B^n[v(t_n)]
:=
- \P \int_{t_n}^{t_{n+1}} \frac16 \fe^{s\partial_x^3}
\Big(\fe^{-s\partial_x^3} \partial_x^{-1} v(t_n)\>  \fe^{-s\partial_x^3}v(t_n)\>\fe^{-s\partial_x^3}v(t_n)\>\Big) \d s .
$$
We approximate $B^n[v(t_n)]$ by considering its Fourier coefficient, i.e., 
$\mathcal{F}_0\big[ B^n[v(t_n)] \big]=0$ and for $k\ne0$ 
\begin{align*}
&\hspace{-10pt} \mathcal{F}_k\big[ B^n[v(t_n)] \big] \\ 
=&\,
-\sum_{\substack{k_1+k_2+k_3=k\\ k_1,k_2,k_3\ne0}} \frac16 \int_{t_n}^{t_{n+1}}  \fe^{-is\phi}
\frac{1}{ik_1} \hat v_{k_1}(t_n)\>\hat v_{k_2}(t_n)\>\hat v_{k_3}(t_n)\, \d s \\
=&\,
-\sum_{\substack{k_1+k_2+k_3=k\\ k_1,k_2,k_3\ne0}} \frac{1}{18i} \int_{t_n}^{t_{n+1}} \fe^{-is\phi}
\bigg( \frac{1}{k_1} + \frac{1}{k_2} + \frac{1}{k_3}\bigg) 
\hat v_{k_1}(t_n)\>\hat v_{k_2}(t_n)\>\hat v_{k_3}(t_n)\, \d s \\
=&\,
-\sum_{\substack{k_1+k_2+k_3=k\\ k_1,k_2,k_3\ne0}} \frac{1}{18i} \int_{t_n}^{t_{n+1}} \fe^{-is\phi}
\bigg( \frac{1}{k_1} + \frac{1}{k_2} + \frac{1}{k_3} - \frac{1}{k} \bigg) 
\hat v_{k_1}(t_n)\>\hat v_{k_2}(t_n)\>\hat v_{k_3}(t_n)\, \d s \\ 
&\, 
-\sum_{\substack{k_1+k_2+k_3=k\\ k_1,k_2,k_3\ne0}} \frac{1}{18ik} \int_{t_n}^{t_{n+1}} \fe^{-is\phi}
\hat v_{k_1}(t_n)\>\hat v_{k_2}(t_n)\>\hat v_{k_3}(t_n)\, \d s \\ 
=&\,
-\sum_{\substack{k_1+k_2+k_3=k\\ k_1,k_2,k_3\ne0}} \frac{1}{18i} \int_{t_n}^{t_{n+1}} \fe^{-is\phi}
\frac{\phi}{3kk_1k_2k_3} 
 \hat v_{k_1}(t_n)\>\hat v_{k_2}(t_n)\>\hat v_{k_3}(t_n)\, \d s \\ 
&\, 
-\sum_{\substack{k_1+k_2+k_3=k\\ k_1,k_2,k_3\ne0}} \frac{1}{18ik} \int_{t_n}^{t_{n+1}} \fe^{-is\phi}
\hat v_{k_1}(t_n)\>\hat v_{k_2}(t_n)\>\hat v_{k_3}(t_n)\, \d s , 
\end{align*}  
where we have used the following relation (which was discovered in \cite{WuZhao-BIT}):
$$
\frac{1}{k_1} + \frac{1}{k_2} + \frac{1}{k_3} - \frac{1}{k} 
= \frac{\phi}{3kk_1k_2k_3} .
$$
Therefore, by applying the inverse Fourier transform, we have  
\begin{align}\label{expression-B}
B^n[v(t_n)]  
=&\,
- \frac{1}{54} \fe^{s\partial_x^3}\partial_x^{-1}\Big[ \Big(\fe^{-s\partial_x^3}\partial_x^{-1}v(t_n)\Big)^3\Big] \Big|_{s=t_n}^{s=t_{n+1}} 
+ S^n[v(t_n)] ,
\end{align} 
with  
\begin{align*}
S^n[v(t_n)] 
&=- \mathcal{F}_k^{-1} \sum_{k_1+k_2+k_3=k}  \frac{1}{18ik}  \fe^{-it_n\phi} 
\hat v_{k_1}(t_n)\>\hat v_{k_2}(t_n)\>\hat v_{k_3}(t_n) \int_{0}^{\tau}\fe^{-is\phi} \d s . 
\end{align*}
In view of the expressions of $\phi_1$ and $\phi_2$ in \eqref{def:phi-1}--\eqref{def:phi-2}, if $k_1+k_2=0$ then $\phi=0$. Therefore, we can decompose the expression of $S^n[v(t_n)] $ into the following two parts (according to whether $k_1+k_2$ is zero or not): 
\begin{align*}
S^n[v(t_n)] 
&= - \mathcal{F}_k^{-1} \sum_{k_1+k_2=0} \frac{\tau}{18ik} 
\hat v_{k_1}(t_n)\>\hat v_{k_2}(t_n)\>\hat v_{k}(t_n) \\ 
&\qquad - \mathcal{F}_k^{-1} \sum_{\substack{k_1+k_2+k_3=k\\ k_1+k_2\ne0}} \frac{1}{18ik} \fe^{-it_n\phi}
\hat v_{k_1}(t_n)\>\hat v_{k_2}(t_n)\>\hat v_{k_3}(t_n) \int_{0}^{\tau} \fe^{-is\phi} \d s . 
\end{align*}
Now we use the following formula: 
\begin{align}\label{Mean-Value}
\int_{0}^{\tau} \fe^{-is\phi}\,\d s 
&=\tau M_\tau(\fe^{-is\phi_1}\fe^{-is\phi_2})
=\tau M_\tau\big(\fe^{-is\phi_1}\big)M_\tau\big(\fe^{-is\phi_2}\big) +\tau\eta(\tau,k,k_1,k_2,k_3),
\end{align}
where we have used the notation $M_\tau(f)=\tau^{-1}\int_0^\tau f(t)\d t$ defined in Section \ref{section:average}, with 
\begin{align}\label{def:eta}
\eta(\tau,k,k_1,k_2,k_3):=M_\tau(\fe^{-is\phi_1}\fe^{-is\phi_2}) -M_\tau\big(\fe^{-is\phi_1}\big)M_\tau\big(\fe^{-is\phi_2}\big).
\end{align}
Substituting \eqref{Mean-Value} into the expression of $S^n[v(t_n)]$, we obtain  
\begin{align}\label{express-C}
S^n[v(t_n)] &=- \frac1{18} \tau \partial_x^{-1} v(t_n)\>   \P_{0}\big[v(t_n)^2\big] \notag\\
&\quad\, - \mathcal{F}_k^{-1} \sum_{\substack{k_1+k_2+k_3=k\\ k_1+k_2\ne0}} \frac{\tau}{18ik} M_\tau\big(\fe^{-is\phi_1}\big)M_\tau\big(\fe^{-is\phi_2}\big) 
\fe^{-it_n\phi_1} \fe^{-it_n\phi_2}   
\hat v_{k_1}(t_n)\>\hat v_{k_2}(t_n)\>\hat v_{k_3}(t_n)  \notag\\
&\quad\, + R_2^n[v(t_n)], 
\end{align}
where  the remainder $R_2^n[v(t_n)]$ is given by 
\begin{align}\label{def-R2n}
R_2^n[v(t_n)]
&= -\mathcal{F}_k^{-1} 
\sum_{\substack{k_1+k_2+k_3=k\\ k_1+k_2\ne0}}
\frac{\tau}{18ik}  
\fe^{-it_n\phi} \eta(\tau,k,k_1,k_2,k_3)
\hat v_{k_1}(t_n)\hat v_{k_2}(t_n)\hat v_{k_3}(t_n) \d s ,
\end{align}
which will be dropped in the numerical scheme. The other terms on the right-hand side of \eqref{express-C} will be kept in the numerical scheme. 

By applying Fourier transform to \eqref{def-Fn} we can obtain $\mathcal F_{0}\big[\partial_x^{-1}F^n[t_{n+1}; v(t_n)]\big]=0$ and the following expression for $\tilde k\ne0$:  
\begin{align}\label{expr-FkFn}
\mathcal F_{\tilde k}\big[\partial_x^{-1}F^n[t_{n+1}; v(t_n)]\big]
=&\, \frac16 \sum\limits_{k_1+k_2=\tilde k}\frac{\fe^{-i\tau(\tilde k^3-k_1^3-k_2^3)}-1}{ik_1 ik_2 i\tilde k} \fe^{-it_n(\tilde k^3-k_1^3-k_2^3)}\hat v_{k_1}(t_n) \hat v_{k_2}(t_n) \notag\\
=&\, \frac12 \sum\limits_{k_1+k_2=\tilde k}\frac{\fe^{-i\tau 3 k_1k_2(k_1+k_2)}-1}{-i3k_1k_2(k_1+k_2)} \fe^{-it_n 3k_1k_2(k_1+k_2)}\hat v_{k_1}(t_n) \hat v_{k_2}(t_n) \notag\\
=&\, \frac12 \sum\limits_{k_1+k_2=\tilde k}\frac{\fe^{-i\tau \phi_1}-1}{-i\phi_1} \fe^{-it_n \phi_1}\hat v_{k_1}(t_n) \hat v_{k_2}(t_n) \notag\\
=&\, \frac{\tau}{2} \sum\limits_{k_1+k_2=\tilde k}M_\tau\big(\fe^{-is\phi_1}\big) \fe^{-it_n \phi_1}\hat v_{k_1}(t_n) \hat v_{k_2}(t_n)  ,
\end{align}
where we have used the notation $\phi_1=3 k_1k_2(k_1+k_2)$ and the following relation in the last equality: 
$$
M_\tau\big(\fe^{-is\phi_1}\big) = \tau^{-1}\int_0^\tau \fe^{-is\phi_1}\d s =\frac{\fe^{-i\tau \phi_1}-1}{-i\tau\phi_1} . 
$$
Then, substituting \eqref{expr-FkFn} into the right-hand side of \eqref{express-C} and using the notation $\phi_2=k^3-k_3^3-(k_1+k_2)^3=3kk_3(k_1+k_2)$, we obtain  
\begin{align}\label{expression-S}
S^n[v(t_n)] 
&= - \frac1{18} \tau \partial_x^{-1} v(t_n)\>   \P_{0}\big[v(t_n)^2\big] \notag\\
&\quad\, 
-  \mathcal{F}_k^{-1} \frac{1}{9ik} \sum_{\substack{\tilde k+k_3= k\\ \tilde k\ne0}}  
\frac{\tau}{2} \sum_{k_1+k_2=\tilde k} M_\tau\big(\fe^{-is\phi_1}\big) \fe^{-it_n \phi_1}\hat v_{k_1}(t_n) \hat v_{k_2}(t_n)  \>   \frac{\fe^{-i\tau \phi_{2}}-1}{-i\tau\phi_{2}}\fe^{-it_n\phi_2} \hat v_{k_3}(t_n) \notag \\ 
&\quad\,+ R_2^n[v(t_n)] \notag\\
&= - \frac1{18} \tau \partial_x^{-1} v(t_n)\>   \P_{0}\big[v(t_n)^2\big] \notag\\
&\quad\, 
- \mathcal{F}_k^{-1}  \frac{1}{9ik} \sum_{\tilde k+k_3= k}   
 \fe^{it_n\tilde k^3}\mathcal F_{\tilde k}\big[\partial_x^{-1}F^n[t_{n+1}; v(t_n)]\big] 
 \fe^{-it_nk^3}  \frac{\fe^{-i\tau (k^3-\tilde k^3-k_3^3)}-1}{-i\tau 3k\tilde kk_3}  \fe^{it_nk_3^3}\hat v_{k_3}(t_n) \notag\\ 
&\quad\,+ R_2^n[v(t_n)] \notag\\
&= - \frac1{18} \tau \partial_x^{-1} v(t_n)\>   \P_{0}\big[v(t_n)^2\big] \notag\\
 &\quad\, -\frac1{27\tau}  \fe^{s\partial_x^3}\partial_x^{-2}\Big[ \fe^{-s\partial_x^3}\partial_x^{-2}F^n[t_{n+1}; v(t_n)] \, \fe^{-s\partial_x^3}\partial_x^{-1}v(t_n)\Big]\Big|_{s=t_n}^{s=t_{n+1}}
 + R_2^n[v(t_n)]  .
\end{align}

Combining the expressions of $A^n[v(t_n)]$, $B^n[v(t_n)]$ and $S^n[v(t_n)]$ in \eqref{expression-A}, \eqref{expression-B} and \eqref{expression-S}, respectively, we obtain 
\begin{align}\label{A-G-R2}
A^n[v(t_n)] = H^n[v(t_n)] + R_2^n[v(t_n)] , 
\end{align}
with
\begin{align}
\begin{aligned}
H^n[v(t_n)]
&=\frac13 \fe^{t_{n+1}\partial_x^3}  \P \left(\fe^{-t_{n+1}\partial_x^3}\partial_x^{-1}v(t_n)\> \fe^{-t_{n+1}\partial_x^3}\partial_x^{-1} F^n\big[t_{n+1}; v(t_n)\big]\right)
\notag \\
&\quad\, +  \frac19 \tau \partial_x^{-1} v(t_n)\>   \P_{0}\big[v(t_n)^2\big]  \\
&\quad\,
- \frac{1}{54} \fe^{s\partial_x^3}\partial_x^{-1}\Big[ \Big(\fe^{-s\partial_x^3}\partial_x^{-1}v(t_n)\Big)^3\Big]  \Big|_{s=t_n}^{s=t_{n+1}} \\
&\quad\,
-\frac1{27\tau}  \fe^{s\partial_x^3}\partial_x^{-2}\Big[ \fe^{-s\partial_x^3}\partial_x^{-2}F^n\big[t_{n+1}; v(t_n)\big] \,  \fe^{-s\partial_x^3}\partial_x^{-1}v(t_n)\Big]\Big|_{s=t_n}^{s=t_{n+1}}.
\end{aligned}
\end{align}
Then, substituting \eqref{A-G-R2} into \eqref{v-formu-1}, we obtain 
\begin{align}\label{v-formu-2}
  v(t_{n+1})
    = 
  v(t_n)
    +F^n[t_{n+1}; v(t_n)]
    +H^n[v(t_n)]
    + R_1^n[v] + R_2^n[v(t_n)] .
\end{align}
By dropping the remainders $R_1^n[v]$ and $R_2^n[v(t_n)]$ in \eqref{v-formu-2}, we obtain the following time-stepping method: 
\begin{align}\label{numer-formula'}
  v^{n+1}
  = 
  v^n+F^n[t_{n+1}; v^n]  + H^n[v^n] , 
\end {align}
where $v^n$ denotes the numerical approximation to $v(t_n)$. After substituting $v^n=\fe^{t_n\partial_x^3}u^n$ into \eqref{numer-formula'}, we obtain 
\begin{align}\label{numer-method-u}
u^{n+1}
= 
\fe^{-\tau\partial_x^3}u^{n} +\fe^{-t_{n+1}\partial_x^3}F^n\big[t_{n+1}; \fe^{t_{n}\partial_x^3}u^{n} \big]  + \fe^{-t_{n+1}\partial_x^3}H^n[\fe^{t_{n}\partial_x^3}u^{n}] , 
\end {align}
which is equivalent to the numerical scheme in \eqref{numer-formula-u}, where 
$$
F[u^{n}]=\fe^{-t_{n+1}\partial_x^3}F^n\big[t_{n+1}; \fe^{t_{n}\partial_x^3}u^{n} \big]
\quad\mbox{and}\quad 
H[u^{n}]=\fe^{-t_{n+1}\partial_x^3}H^n[\fe^{t_{n}\partial_x^3}u^{n}] 
. 
$$

The rest of this article is devoted to the proof of Theorem \ref{main:thm1} on the convergence with order $\gamma$ (up to a logarithmic factor) of the proposed method for $H^\gamma$ initial data with $\gamma\in(0,1]$.



\section{Reduction to a perturbed KdV equation}\label{section:reduction}

By using the twisted function $v(t)=\fe^{t\partial_x^3}u(t)$, the KdV equation can be equivalently formulated into the following integral form: 
\begin{align}\label{est:v-nt}
v(t)= v^0 +\frac{1}{2}\int_{0}^{t}\fe^{s\partial_x^3}
\partial_x\left(\fe^{-s\partial_x^3}v(s)\right)^2 \d s \quad\mbox{for}\,\,\, t\in[0,T] . 
\end{align}
In order to establish stability estimates for the numerical scheme under low-regularity conditions below $H^1$ (especially below $H^{\frac12}$), we shall rewrite the numerical scheme in \eqref{numer-formula'} as a perturbation of the integral equation in \eqref{est:v-nt}. 

By using the relation $H^n[v^n] = A^n[v^n] - R_2^n[v^n]$, as shown in \eqref{A-G-R2}, we first rewrite the numerical scheme in \eqref{numer-formula'} as  
\begin{align}\label{numer-formula} 
  v^{n+1}
  = 
  v^n+F^n[t_{n+1}; v^n]  + A^n[v^n]-R^n_2[v^n].
 \end {align}
In view of the definitions of $F^n[t_{n+1}; v^n]$ and $A^n[v^n]$ in \eqref{def-Fn-1} and \eqref{def-An-1}, respectively, the following relation holds: 
\begin{align*}
F^n\big[t_{n+1};v^n\big]  + A^n[v^n]
=&\, \frac12 \int_{t_n}^{t_{n+1}} \fe^{s\partial_x^3}\partial_x\left(\fe^{-s\partial_x^3}v^n\right)^2\,\d s\notag\\
&+ \int_{t_n}^{t_{n+1}} \fe^{s\partial_x^3}\partial_x\left(\fe^{-s\partial_x^3}v^n\cdot \fe^{-s\partial_x^3}F^n[s;v^n]\right)\,\d s\notag\\
=&\, \frac12 \int_{t_n}^{t_{n+1}} \fe^{s\partial_x^3}\partial_x\left(\fe^{-s\partial_x^3}\Big(v^n+F^n[s;v^n]\Big)\right)^2\,\d s\notag\\
&\, -\frac12 \int_{t_n}^{t_{n+1}} \fe^{s\partial_x^3}\partial_x\left(\fe^{-s\partial_x^3}F^n[s;v^n]\right)^2\,\d s.
\end{align*}
Substituting this expression into \eqref{numer-formula}, we obtain 
\begin{align}\label{numer-formula-2}
  v^{n+1}
  = 
  v^n &+\frac12 \int_{t_n}^{t_{n+1}} \fe^{s\partial_x^3}\partial_x\left(\fe^{-s\partial_x^3}\Big(v^n+F^n[s;v^n]\Big)\right)^2\,\d s\notag\\
&-\frac12 \int_{t_n}^{t_{n+1}} \fe^{s\partial_x^3}\partial_x\left(\fe^{-s\partial_x^3}F^n[s;v^n]\right)^2\,ds-R^n_2[v^n].
 \end {align}
 
We define a continuous function $\mathscr V(t)$, $t\in[0,T]$, which has the following expression for $ t\in[t_n,t_{n+1}]$:
\begin{align}\label{def:v-s-num}
\mathscr V(t)
  = 
  v^n &+\frac12 \int_{t_n}^t \fe^{s\partial_x^3}\partial_x\left(\fe^{-s\partial_x^3}\Big(v^n+F^n[s;v^n]\Big)\right)^2\,ds\notag\\
&-\frac{t-t_n}{\tau}\left[\frac12 \int_{t_n}^{t_{n+1}} \fe^{s\partial_x^3}\partial_x\left(\fe^{-s\partial_x^3}F^n[s;v^n]\right)^2\,ds+R^n_2[v^n]\right] . 
\end{align}
In particular, $\mathscr V(t_n)=v^n$ for $n=0,1,\cdots, N$. For $ t\in[t_n,t_{n+1}]$ we further rewrite \eqref{def:v-s-num} as
\begin{align}\label{def:v-s-num-222}
  \mathscr V(t)
  =
  v^n &+\frac12 \int_{t_n}^t \fe^{s\partial_x^3}\partial_x\left(\fe^{-s\partial_x^3}\mathscr V(s)\right)^2\,\d s\notag\\
  &-\frac12 \int_{t_n}^t \fe^{s\partial_x^3}\partial_x\left(\fe^{-s\partial_x^3}\big(\mathscr V(s)-v^n-F^n[s;v^n]\big)\cdot \fe^{-s\partial_x^3}\big(\mathscr V(s)+v^n+F^n[s;v^n]\big)\right)\,\d s\notag\\
&-\frac{t-t_n}{\tau}\left[\frac12 \int_{t_n}^{t_{n+1}} \fe^{s\partial_x^3}\partial_x\left(\fe^{-s\partial_x^3}F^n[s;v^n]\right)^2\,\d s+R^n_2[v^n]\right],
\end{align} 
and then iterate this expression for $n=0,1,\dots$. This yields the following integral equation in the continuous form: 
\begin{align}\label{def:v-s-num-2}
\mathscr V(t)
=
  v^0 +\frac12 \int_{0}^t \fe^{s\partial_x^3}\partial_x\left(\fe^{-s\partial_x^3}\mathscr V(s)\right)^2\,\d s 
  +\mathcal R(t) \quad\mbox{for}\,\,\, t\in (t_n,t_{n+1}], 
\end{align} 
with a remainder 
\begin{align} \label{def-R(t)}
\mathcal R(t) 
=
&-\frac12 \int_{t_n}^t \fe^{s\partial_x^3}\partial_x\left(\fe^{-s\partial_x^3}\big(\mathscr V(s)-v^n-F^n[s;v^n]\big)\cdot \fe^{-s\partial_x^3}\big(\mathscr V(s)+v^n+F^n[s;v^n]\big)\right)\,\d s \notag\\
&-\frac{t-t_n}{\tau}\left[\frac12 \int_{t_n}^{t_{n+1}} \fe^{s\partial_x^3}\partial_x\left(\fe^{-s\partial_x^3}F^n[s;v^n]\right)^2\,\d s+R^n_2[v^n]\right] \notag\\
  &-\frac12 \sum\limits_{j=0}^{n-1} \int_{t_j}^{t_{j+1}} \fe^{s\partial_x^3}\partial_x\left(\fe^{-s\partial_x^3}\big(\mathscr V(s)-v^j-F^j\big[s;v^j\big]\big)\cdot \fe^{-s\partial_x^3}\big(\mathscr V(s)+v^j+F^j[s;v^j]\big)\right)\,\d s \notag\\
&-\sum\limits_{j=0}^{n-1}\left[\frac12 \int_{t_j}^{t_{j+1}} \fe^{s\partial_x^3}\partial_x\left(\fe^{-s\partial_x^3}F^j[s;v^j]\right)^2\,\d s+R^j_2[v^j]\right] \notag\\
=&\!: \mathcal R^*_1(t)+\mathcal R^*_2(t)+\mathcal R^*_3(t)+\mathcal R^*_4(t) \quad\mbox{for}\,\,\, t\in(t_n,t_{n+1}] . 
\end{align}
The integral equation in \eqref{def:v-s-num-2} can be viewed as a perturbation of \eqref{est:v-nt} by the remainder $\mathcal{R}(t)$. This continuous integral formulation of the numerical scheme allows us to apply low-frequency and high-frequency decomposition in estimating the stability with respect to the perturbation, which can significantly weaken the regularity conditions compared with the energy approach of stability estimates used in the literature for the numerical analysis of the KdV equation.


In order to analyze the error of the numerical approximations, we consider the following continuous and discrete error functions: 
$$
e(t) :=v(t)- \mathscr V(t) \,\,\,\mbox{for}\,\,\, t\in[0,T], \quad\mbox{and}\quad e^n :=v(t_n)-v^n ,
$$
which satisfy that $e(t_n)=e^n$. 
Then, by comparing \eqref{est:v-nt} and \eqref{def:v-s-num-2}, we obtain the following error equation: 
\begin{align}\label{en-itera}
e(t)= e^0 & +  \int_0^t\fe^{s\partial_x^3}
  \partial_x\left(\fe^{-s\partial_x^3}e(s)\>\fe^{-s\partial_x^3}\big(v(s)-\frac{1}{2}e(s)\big)\right)\,\d s - \mathcal R(t)
  \quad\mbox{for}\,\,\, t\in[0,T] .
\end{align}
To simplify the notation, we rewrite \eqref{en-itera} as 
 \begin{align}\label{def:es}
 e(t)= e^0 + \mathcal F(t)- \mathcal R(t) , 
 \end{align}
 with
\begin{align}\label{def:math-F}
 \mathcal F(t) :=\int_0^t\fe^{s\partial_x^3}
  \partial_x\Big(\fe^{-s\partial_x^3}e(s)\>\fe^{-s\partial_x^3}\Big(v(s)-\frac{1}{2}e(s)\Big)\Big)\,\d s .
\end{align}

In the next two sections, we present estimates for the local error $R_2^n[v^n]$ and the global remainder $\mathcal R(t)$. 

\section{Estimates for the local error ${R_2^n[v^n]}$}\label{section:local-error}

As an extended notation of the local error $R_2^n[v^n]$ defined in \eqref{def-R2n}, we introduce the following trilinear form:  
\begin{align}\label{def-R2n-v1v2v3}
R_2^n[v_1,v_2,v_3]
= -\mathcal{F}_k^{-1} 
\sum_{\substack{k_1+k_2+k_3=k\\ k_1+k_2\ne0}}
\frac{\tau}{18ik} 
\fe^{-it_n\phi} \eta(\tau,k,k_1,k_2,k_3)\hat v_{1,k_1}\>\hat v_{2,k_2}\>\hat v_{3,k_3} ,  
\end{align}
where the symbol $\eta(\tau,k,k_1,k_2,k_3)$ is defined in \eqref{def:eta}. 
The main result of this section is the following proposition, where the estimate in (2) is not sharp but sufficient for the  purpose of this article. 
\begin{proposition}\label{prop:est-R-n_2}
For the function $R^n_2[v_1,v_2,v_3]$ defined in \eqref{def-R2n-v1v2v3}, the following estimates hold: 
\begin{itemize}
\item[(1)] If $v_1, v_2,v_3\in H^\gamma$ with $\gamma\in [0,1]$, then  
$$
\big\|R^n_2[v_1,v_2,v_3]\big\|_{L^2}\lesssim \tau^{1+\gamma}\ln(1/\tau) \|v_1\|_{H^\gamma} \|v_2\|_{H^\gamma}\|v_3\|_{H^\gamma} .
$$
\item[(2)] If $v_1\in L^2, v_2\in H^\gamma,v_3\in  H^\gamma$ for some $\gamma>0$, then    
$$
\big\|R^n_2[v_1,v_2,v_3]\big\|_{L^2}\lesssim \tau \|v_1\|_{L^2} \|v_2\|_{H^\gamma}\|v_3\|_{H^\gamma}.
$$
Moreover, the same result holds when $v_1,v_2,v_3$ are permuted on the right-hand side. 
\end{itemize}
\end{proposition}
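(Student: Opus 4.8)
The plan is to treat $R^n_2[v_1,v_2,v_3]$ as a trilinear multiplier operator of exactly the type handled by Lemma~\ref{lem:ln-loss}, with symbol
$$
m(k,k_1,k_2,k_3)=-\frac{\tau}{18\,ik}\,\fe^{-it_n\phi}\,\eta(\tau,k,k_1,k_2,k_3),
$$
and to control $\eta$ by the averaging estimates of Lemma~\ref{lem:average2}. Throughout I would first observe that $\eta=0$ whenever $\phi_1=0$ or $\phi_2=0$ (since one of the two exponentials is then constant), so by \eqref{def:phi-1}--\eqref{def:phi-2} only indices with $k,k_1,k_2,k_3\neq0$ contribute, and I would split the summation into the resonant set where $k_1+k_3=0$ or $k_2+k_3=0$ (forcing $\phi=0$) and the genuinely non-resonant set $\Gamma(k)$.

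For part (1), I would extract the weights $\langle k_j\rangle^\gamma$ by applying the operator to $J^\gamma v_j$, so that $\|J^\gamma v_j\|_{L^2}=\|v_j\|_{H^\gamma}$ and it suffices to verify that the reweighted symbol $m\prod_j\langle k_j\rangle^{-\gamma}$ obeys the hypothesis \eqref{est:mk} with overall constant $\tau^{1+\gamma}$ and $A\sim\tau^{-1}$; Lemma~\ref{lem:ln-loss} then yields precisely $\tau^{1+\gamma}\ln(1/\tau)$. On $\Gamma(k)$ the factor $\tau^\gamma$ is produced by interpolating (as a geometric mean with exponents $\gamma$ and $1-\gamma$) the bound $|\eta|\lesssim\tau\min\{|\phi_1|,|\phi_2|\}$ against the ratio bound $|\eta|\lesssim\min\{|\phi_1/\phi_2|,|\phi_2/\phi_1|\}$ from \eqref{average2-1}; the remaining borderline decay $\langle k\rangle^{-1/2}\langle k_3\rangle^{-1/2}+\langle k_2\rangle^{-1/2}\langle k_3\rangle^{-1/2}$ is what forces the single logarithm, while the cutoff $A\sim\tau^{-1}$ enters through \eqref{average2-2}: once $\tau|\phi|\gtrsim1$ the bound $|\eta|\lesssim\tau^{-1}|\phi|^{-1}$ supplies the genuine extra decay $\langle\cdot\rangle^{-\theta}$ needed for $\theta>0$. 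The size of $|\phi|$ relative to $|k_m|$ is controlled by Lemma~\ref{lem:a4-est}, whose decomposition $\Gamma_1,\Gamma_2$ is exactly what distinguishes the near-resonant frequencies (where the decay is borderline) from those where $|\phi|$ is large (where there is decay to spare). The resonant terms carry $\phi=0$: for $k_3=-k_1$, $k_2=k$ one computes $\eta=1-|M_\tau(\fe^{-is\phi_1})|^2$, hence $|\eta|\lesssim\min\{1,(\tau|\phi_1|)^2\}\le(\tau|\phi_1|)^\gamma$ for $\gamma\le1$, and a direct Cauchy--Schwarz in $k_1$ (pairing $\hat v_{1,k_1}$ with $\hat v_{3,-k_1}$ and distributing $|\phi_1|^\gamma=|3k_1k_2(k_1+k_2)|^\gamma$ across the three $H^\gamma$ norms) gives the required bound with no logarithm.

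For part (2), which is non-sharp, I would not optimize the power of $\tau$: the prefactor already supplies $\tau^1$, so it suffices to show that the symbol $k^{-1}\fe^{-it_n\phi}\eta$ defines a bounded operator $L^2\times H^\gamma\times H^\gamma\to L^2$. Peeling $J^\gamma$ off $v_2$ and $v_3$ reduces this to an $L^2$-trilinear bound for $|k|^{-1}|\eta|\langle k_2\rangle^{-\gamma}\langle k_3\rangle^{-\gamma}$, and here I would use only the $\tau$-free facets $|\eta|\lesssim\min\{1,|\phi_1/\phi_2|,|\phi_2/\phi_1|\}$, i.e.\ $|\eta|\lesssim\min\{1,|k_1k_2|/|kk_3|,|kk_3|/|k_1k_2|\}$. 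A short case analysis on the largest frequency shows that the gain from these ratio bounds, together with the $|k|^{-1}$ factor, always leaves a \emph{strictly} summable decay once the extra $\langle k_2\rangle^{-\gamma}\langle k_3\rangle^{-\gamma}$ with $\gamma>0$ is taken into account; hence the operator is bounded by duality and Cauchy--Schwarz exactly as in the proof of Lemma~\ref{lem:ln-loss}, and \emph{no} logarithm appears (this is the structural reason the log is absent in (2) but present in (1): in (1) the gain $\gamma$ is consumed in extracting $\tau^\gamma$). The permutation statement follows because $\eta$ is symmetric in $k_1\leftrightarrow k_2$ and the argument is indifferent to which input is placed in $L^2$.

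The main obstacle is the symbol estimate in part (1): one must match the precise two-term structure $\langle k\rangle^{-1/2-\theta}\langle k_3\rangle^{-1/2-\theta}+\langle k_2\rangle^{-1/2-\theta}\langle k_3\rangle^{-1/2-\theta}$ of \eqref{est:mk} in \emph{every} frequency regime while simultaneously extracting exactly $\tau^\gamma$ and confining the divergence to a single $\ln(1/\tau)$ generated by the cutoff $A\sim\tau^{-1}$. Making the interpolation exponents cooperate with the phase-size decomposition of Lemma~\ref{lem:a4-est}, and separately handling the resonant frequencies with $\phi=0$ where \eqref{average2-2} is unavailable, is the delicate part; the remaining steps are routine Plancherel and Cauchy--Schwarz.
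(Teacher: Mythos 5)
Your overall architecture for part (1) is the same as the paper's: view $R_2^n$ as a trilinear multiplier, control $\eta$ via Lemma~\ref{lem:average2}, and feed the resulting symbol bound into Lemma~\ref{lem:ln-loss} with $A$ a power of $\tau^{-1}$; your separate treatment of the resonant set $k_1+k_3=0$ or $k_2+k_3=0$ and your computation $\eta=1-|M_\tau(\fe^{-is\phi_1})|^2$ there are correct (though for $\gamma>0$ the paper absorbs these indices into the general estimate rather than treating them separately). For part (2) you take a genuinely different route: the paper chooses $\tilde\gamma=0$, recognizes the symbol as $|\partial_x|^{-1/2}$ acting on products, and closes with H\"older and the embedding $\||\partial_x|^{-1/2}v\|_{L^\infty}\lesssim\|v\|_{H^\gamma}$ for $\gamma>0$, whereas you propose a reweighted-symbol summability argument; both are plausible, and yours has the virtue of explaining structurally why no logarithm appears in (2).

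The genuine gap is that the step you defer as ``the delicate part'' is in fact the entire content of the paper's proof, and the mechanism you sketch for it does not work as stated. The paper's Lemma~\ref{lem:eta} shows $\langle k\rangle^{-1}|\eta|\lesssim\tau^{\tilde\gamma}\big(|k|^{-\frac12}|k_1^*|^{\tilde\gamma}|k_2^*|^{\tilde\gamma}|k_3^*|^{\tilde\gamma-\frac12}+|k_1^*|^{\tilde\gamma}|k_2^*|^{\tilde\gamma-\frac12}|k_3^*|^{\tilde\gamma-\frac12}\big)$ by choosing \emph{different} interpolation exponents $(a,b)$ with $a+b=\tilde\gamma$ in each of four frequency regimes; the extra decay $\langle\cdot\rangle^{-\theta}$ and the factor $A^{\theta}=\tau^{-\theta}$ required by \eqref{est:mk} are then both obtained simply by running this lemma at $\tilde\gamma=\gamma-\theta$, i.e.\ by trading regularity for decay --- \eqref{average2-2} is never used when $\gamma>0$. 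Your proposed source of the $\theta$-decay, namely $|\eta|\lesssim\tau^{-1}|\phi|^{-1}$ once $\tau|\phi|\gtrsim1$, fails on the near-resonant set $\Gamma_1(k)$: there $|k|\sim|k_1|\sim|k_2|\sim|k_3|\sim K$ but $|\phi|$ can be as small as $K$, so a geometric mean of $\tau^{-1}K^{-1}$ against the main bound delivers the needed gain $K^{-2\theta}$ per loss $A^{\theta}$ only when $\gamma\ge1$. Relatedly, your single interpolation ``with exponents $\gamma$ and $1-\gamma$'' produces one fixed pair $(a,b)$, which cannot simultaneously yield the two-term structure of \eqref{est:mk} in all four regimes. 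Finally, the endpoint $\gamma=0$ (included in the statement of (1)) is not addressed: there the paper needs a separate argument on $\Gamma_1$ (redistributing the decay as $|k_1|^{-\frac13}|k_2|^{-\frac13}|k_3|^{-\frac13}$ and using $L^2\hookrightarrow W^{-\frac13,6}$) precisely because no $\theta$ can be extracted from either regularity or the phase, and this idea is absent from your plan. Until the regime-by-regime symbol estimate is actually carried out, the proof of part (1) is incomplete.
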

\begin{proof}
For the simplicity of notation, we simply write $R_2^n=R_2^n[v_1,v_2,v_3]$ throughout the proof of Proposition \ref{prop:est-R-n_2}. According to the definition in \eqref{def-R2n-v1v2v3}, the Fourier coefficients of $R_2^n$ have the following expressions: $\mathcal{F}_0[R_2^n ] =0$ and 
\begin{align*}
\mathcal{F}_k[ R_2^n ]
&= 
-\sum_{\substack{k_1+k_2+k_3=k\\ k_1+k_2\ne0}}
\frac{\tau}{18 i k}\fe^{-it_n\phi} \eta(\tau,k,k_1,k_2,k_3)
\hat v_{1,k_1}\>\hat v_{2,k_2}\>\hat v_{3,k_3} \quad\mbox{for\, $k\neq 0$} . 
\end{align*}
Similarly as the proof of Proposition \ref{lem:tri-linear}, we may assume that $\hat v_{j,k}\ge 0$ for $j=\{1,2,3\}$ and $k\in\Z$. Otherwise we can replace $\hat v_{j,k}$ by $|\hat v_{j,k}|$ in the following argument and consider the functions $\tilde v_j := \mathcal{F}_{k}^{-1} [\,|\hat v_{j,k}|\,]$, as in the proof of Lemma \ref{lem:ln-loss}. 

The proof of Proposition \ref{prop:est-R-n_2} relies on the following technical estimate for $\eta(k,k_1,k_2,k_3)$. 
\begin{lemma}\label{lem:eta}
Let $(k,k_1,k_2,k_3)\in \Z^4$ with $k_1+k_2+k_3=k$, and denote by $k_1^*,k_2^*,k_3^*$ a permutation of $k_1,k_2,k_3$ satisfying 
$$
|k_1^*|\ge |k_2^*|\ge |k_3^*|.
$$
Then the following estimate holds for $\tilde\gamma \in [0,1]${\rm:}
\begin{align*} 
\langle k\rangle^{-1}\big|\eta(\tau,k,k_1,k_2,k_3)\big|\lesssim 
\left\{\begin{aligned}
&\tau^{\tilde \gamma}  |k|^{-\frac12}|k_1^*|^{\tilde\gamma}|k_2^*|^{\tilde\gamma} |k_3^*|^{\tilde\gamma-\frac12}+  \tau^{\tilde \gamma} |k_1^*|^{\tilde\gamma} |k_2^*|^{\tilde\gamma-\frac12} |k_3^*|^{\tilde\gamma-\frac12} &&\mbox{if $kk_1k_2k_3\ne0$,} \\
&0 &&\mbox{if $kk_1k_2k_3=0$.}
\end{aligned}\right.
\end{align*}
\end{lemma}
\begin{proof}
In the case $kk_1k_2k_3=0$ either $\phi_1=0$ or $\phi_2=0$, which together with Lemma \ref{lem:average2} imply that $\eta(\tau,k,k_1,k_2,k_3)=0$. Therefore, we focus on the case $kk_1k_2k_3\ne0$ in the proof. 

According to Lemma \ref{lem:average2}, for any $\gamma_j\in [0,\tilde\gamma]$ such that $\gamma_1+\gamma_2=\tilde \gamma$, the following result holds: 
\begin{align*}
\big|\eta(\tau,k,k_1,k_2,k_3)\big|
\lesssim 
\big(\tau |\phi_1|)^{\gamma_1}\big(\tau |\phi_2|)^{\gamma_2}\> \frac{|\phi_2|^{1-\tilde\gamma}}{|\phi_1|^{1-\tilde\gamma}} 
= 
\tau^{\tilde \gamma} |\phi_1|^{\gamma_1+\tilde\gamma-1}|\phi_2|^{\gamma_2+1-\tilde\gamma} .
\end{align*}
In particular, setting $\gamma_1=0$ and $\gamma_2=\tilde\gamma$ yields
\begin{align*}
\big|\eta(\tau,k,k_1,k_2,k_3)\big|
\lesssim 
\tau^{\tilde \gamma} |\phi_1|^{\tilde\gamma-1}|\phi_2| .
\end{align*}
Since the estimate in Lemma \ref{lem:average2} is symmetric about $\alpha=\phi_1$ and $\beta=\phi_2$, 
it follows that we can switch $\phi_1$ and $\phi_2$ in the inequality above, i.e., 
\begin{align*}
\big|\eta(\tau,k,k_1,k_2,k_3)\big|
\lesssim 
\tau^{\tilde \gamma} |\phi_1||\phi_2|^{\tilde\gamma-1} . 
\end{align*}
By considering the geometric average of the two inequalities above, we obtain  
\begin{align*}
\big|\eta(\tau,k,k_1,k_2,k_3)\big|
\lesssim 
\tau^{\tilde \gamma} |\phi_1|^a |\phi_2|^b \quad\mbox{for all $a,b\in [\tilde \gamma-1, 1]$ such that $a+b=\tilde\gamma$}.
 \end{align*}
By the definitions of $\phi_1$ and $\phi_2$ in \eqref{def:phi-1}--\eqref{def:phi-2}, this implies that 
\begin{align}\label{est:eta-1}
|k|^{-1} \big|\eta(\tau,k,k_1,k_2,k_3)\big|
\lesssim 
\tau^{\tilde \gamma} |k|^{-1+b}|k_1|^a|k_2|^a|k_3|^b|k_1+k_2|^{\tilde\gamma} .
\end{align}
Since the right-hand side of \eqref{est:eta-1} is symmetric about $k_1$ and $k_2$, without loss of generality, we may assume that $|k_1|\ge|k_2|$ (the case $|k_2|\ge|k_1|$ can be considered similarly by switching $k_1$ and $k_2$ in the estimates below). 

We consider the following several cases regarding whether $|k_3|$ is larger or smaller than $|k_1|$. 

\begin{enumerate}[leftmargin=35pt]
\item[(i)] $|k_3|\gg |k_1|$: In this case, since we have already assumed that $|k_1|\ge|k_2|$, there must be $|k|\sim |k_3|\gtrsim |k_1|\ge |k_2|$. This is classified as Case 1 below. 

\item[(ii)] $|k_3|\sim |k_1|$: In this case, $|k| = |k_1+k_2+k_3| \lesssim |k_1|$. There are two subcases:

\noindent Case 1: $|k|\sim |k_3|\gtrsim |k_1|\ge |k_2|$.

\noindent Case 2: $|k_3|\sim |k_1|\gg |k|$. 

\item[(iii)] $|k_3|\ll |k_1|$: In this case, $|k| = |k_1+k_2+k_3| \lesssim |k_1|$, and there are two subcases:

\noindent Case 3: $|k|\sim |k_1|\gg |k_3|$.

\noindent Case 4: $|k_1|\gg |k|, |k_3|$. In this case, $|k_2|=|k_1+k_3-k| \sim |k_1|$.

\end{enumerate}
In the following, we estimate $|k|^{-1} \big|\eta(\tau,k,k_1,k_2,k_3)$ for the four different cases respectively.
 
Case 1: $|k|\sim |k_3|\gtrsim |k_1|\ge |k_2|$. In this case, $|k_1^*|\sim|k|\sim|k_3| $ and $|k_2^*|\sim |k_1|, |k_3^*|\sim |k_2|$.  By choosing $a=1$ and $b=\tilde\gamma-1$ in \eqref{est:eta-1} and using the relation $|k_1+k_2|\lesssim |k_1|$, we obtain  
\begin{align*}
|k|^{-1}\big|\eta(\tau,k,k_1,k_2,k_3)\big|
\lesssim &\, \tau^{\tilde \gamma} |k|^{-2+\tilde\gamma} |k_1|^{1+\tilde\gamma}|k_2| |k_3|^{\tilde\gamma-1}\\
= &\, \tau^{\tilde \gamma} |k|^{-\frac12}|k_3|^{\tilde\gamma} |k_1|^{\tilde\gamma} |k_2|^{\tilde\gamma-\frac12}
\cdot |k|^{-\frac32+\tilde\gamma}|k_3|^{-1} |k_1| |k_2|^{\frac32-\tilde\gamma}\\
\lesssim  &\, \tau^{\tilde \gamma}|k|^{-\frac12}|k_3|^{\tilde\gamma} |k_1|^{\tilde\gamma} |k_2|^{\tilde\gamma-\frac12}\\
\sim &\, \tau^{\tilde \gamma}|k|^{-\frac12}|k_1^*|^{\tilde\gamma}|k_2^*|^{\tilde\gamma} |k_3^*|^{\tilde\gamma-\frac12}.
\end{align*}

Case 2: $|k_3|\sim |k_1|\gg |k|$. In this case, $|k_1^*|\sim |k_2^*|\sim|k_1|\sim|k_3| $ and $|k_3^*|\sim |k_2|$. 
By choosing $a=\tilde\gamma-\frac12$ and $b=\frac12$ in \eqref{est:eta-1}  and using the relation $|k_1+k_2|\lesssim |k_1|$, we obtain   
\begin{align*} 
|k|^{-1}\big|\eta(\tau,k,k_1,k_2,k_3)\big| 
\lesssim \tau^{\tilde \gamma}|k|^{-\frac12}|k_1|^{2\tilde\gamma}|k_2|^{\tilde\gamma-\frac12}
\sim\tau^{\tilde \gamma} |k|^{-\frac12}|k_1^*|^{\tilde\gamma}|k_2^*|^{\tilde\gamma}|k_3^*|^{\tilde\gamma-\frac12}.
\end{align*}
%
%
%

Case 3: $|k|\sim |k_1|\gg |k_3|$. In this case, $|k_1^*|\sim|k|\sim|k_1| $ and $k_2^*=k_2$ or $k_3$.  We may assume that $k_2^*=k_2$ and $k_3^*=k_3$ as the other case can be treated in the same way. 
By choosing $a=\tilde\gamma-\frac12, b=\frac12$ in \eqref{est:eta-1} and using the relation $|k_1+k_2|\lesssim |k_1|$, we obtain  
\begin{align*}
|k|^{-1}\big|\eta(\tau,k,k_1,k_2,k_3)\big|
\lesssim &\, \tau^{\tilde \gamma} |k|^{-\frac12}|k_1|^{\tilde\gamma-\frac12}|k_2|^{\tilde\gamma-\frac12}
|k_3|^{\frac12}|k_1|^{\tilde\gamma} \\
\lesssim &\,\tau^{\tilde \gamma} |k_1|^{-1+2\tilde\gamma} |k_2|^{\tilde\gamma-\frac12} |k_3|^\frac12\\
\lesssim &\,\tau^{\tilde \gamma}|k_1|^{\tilde\gamma}|k_2|^{\tilde\gamma-\frac12}|k_3|^{\tilde\gamma-\frac12}
\sim \tau^{\tilde \gamma}|k_1^*|^{\tilde\gamma}|k_2^*|^{\tilde\gamma-\frac12}|k_3^*|^{\tilde\gamma-\frac12}.
\end{align*}

Case 4:  $|k_1|\sim |k_2|\gg |k|, |k_3|$. In this case, $|k_1^*|\sim|k_2^*|\sim |k_1|\sim|k_2| $ and $|k_3^*|\sim|k_3|$.  By choosing $a=\tilde\gamma-1$ and $b=1$ in \eqref{est:eta-1}, we obtain  
\begin{align*}
|k|^{-1}\big|\eta(\tau,k,k_1,k_2,k_3)\big|
\lesssim &\,\tau^{\tilde \gamma}|k_1|^{3\tilde\gamma-2}|k_3| 
\lesssim \tau^{\tilde \gamma}|k_1|^{\tilde\gamma}|k_2|^{\tilde\gamma-\frac12}|k_3|^{\tilde\gamma-\frac12}\cdot |k_1|^{\tilde\gamma-\frac32}|k_3|^{\frac32-\tilde\gamma}\\
\lesssim &\,\tau^{\tilde \gamma}|k_1|^{\tilde\gamma}|k_2|^{\tilde\gamma-\frac12}|k_3|^{\tilde\gamma-\frac12}
\sim \tau^{\tilde \gamma}|k_1^*|^{\tilde\gamma}|k_2^*|^{\tilde\gamma-\frac12}|k_3^*|^{\tilde\gamma-\frac12}.
\end{align*}

Finally, by collecting the estimates in the four cases above, we obtain the desired result.
\end{proof}

We continue with the proof of Proposition \ref{prop:est-R-n_2}. Without loss of generality, we may assume that $|k_1|\ge |k_2|\ge |k_3|$ in applying Lemma \ref{lem:eta}, which implies the following result for $\tilde\gamma\in [0,1]$: 
\begin{align}\label{est:R2n}
\big|\mathcal{F}_k[ R_2^n ]\big|
\lesssim 
\left\{\begin{aligned}
&\tau^{1+\tilde\gamma} \hspace{-5pt} \sum_{k_1+k_2+k_3=k} \hspace{-5pt}
\big( |k|^{-\frac12}|k_1|^{\tilde\gamma}|k_2|^{\tilde\gamma} |k_3|^{\tilde\gamma-\frac12}+|k_1|^{\tilde\gamma} |k_2|^{\tilde\gamma-\frac12} |k_3|^{\tilde\gamma-\frac12}\big)
\hat v_{1,k_1} \hat v_{2,k_2} \hat v_{3,k_3} \\[-5pt]
&\hspace{258pt} \mbox{if $kk_1k_2k_3\ne0$}, \\[5pt] 
&0\hspace{252pt} \mbox{if $kk_1k_2k_3=0$}. 
\end{aligned}\right.
\end{align}
In view of \eqref{est:R2n}, we only need to consider the case $kk_1k_2k_3\ne0$ when estimating $|\mathcal{F}_k[ R_2^n ]|$.\bigskip

\noindent{\it Proof of (1)}: In the case $\gamma\in (0,1]$, we choose $\tilde\gamma =\gamma - \theta $ in \eqref{est:R2n} with $\theta \in [0,\frac{\gamma}{2}]$, i.e.,
\begin{align*}
\big|\mathcal{F}_k[  R_2^n ]\big|
\lesssim 
&\, \tau^{1+\gamma} \tau^{-\theta} \hspace{-5pt} \sum_{k_1+k_2+k_3=k} 
 |k|^{-\frac12}|k_1|^{\gamma-\theta}|k_2|^{\gamma-\theta} |k_3|^{\gamma-\frac12-\theta} \hat v_{1,k_1} \hat v_{2,k_2} \hat v_{3,k_3} 
\\
&\, + \tau^{1+\gamma} \tau^{-\theta} \hspace{-5pt} \sum_{k_1+k_2+k_3=k}  
        |k_1|^{\gamma-\theta} |k_2|^{\gamma-\frac12-\theta} |k_3|^{\gamma-\frac12-\theta} \hat v_{1,k_1} \hat v_{2,k_2} \hat v_{3,k_3} \\
\lesssim 
&\, \tau^{1+\gamma} \tau^{-\theta} \hspace{-5pt} \sum_{k_1+k_2+k_3=k} 
 |k|^{-\frac12-\theta} |k_3|^{-\frac12-\theta} \, 
 |k_1|^{\gamma}\hat v_{1,k_1} |k_2|^{\gamma}\hat v_{2,k_2}|k_3|^{\gamma}\hat v_{3,k_3} 
\\
&\, + \tau^{1+\gamma} \tau^{-\theta} \hspace{-5pt} \sum_{k_1+k_2+k_3=k}  
        |k_2|^{-\frac12-\theta} |k_3|^{-\frac12-\theta} 
        \, 
 |k_1|^{\gamma}\hat v_{1,k_1} |k_2|^{\gamma}\hat v_{2,k_2}|k_3|^{\gamma}\hat v_{3,k_3} .
\end{align*}
Then, by applying Lemma \ref{lem:ln-loss} with $A=\tau^{-1}$ and $\theta_0=\frac\gamma2$, we obtain 
\begin{align}\label{est:R2n+0}
\big\|R^n_2[v_1,v_2,v_3]\big\|_{L^2}\lesssim \tau^{1+\gamma}\ln(1/\tau) \|v_1\|_{H^\gamma} \|v_2\|_{H^\gamma}\|v_3\|_{H^\gamma} .
\end{align}

In the case $\gamma=0$, we decompose $\mathcal{F}_k[R_2^n]$ into two parts according to whether $(k_1,k_2,k_3)\notin\Gamma(k)$ or $(k_1,k_2,k_3)\in\Gamma(k)$, where $\Gamma(k)$ is defined in Section \ref{section:Gammak}. Namely, 
\begin{subequations}\label{eq:5.6-12}
\begin{align}
\mathcal{F}_k[R_2^n]
= 
&\sum_{\substack{k_1+k_2+k_3=k\\ k_1+k_2\ne0\\ k_1+k_3=0\,\,\mbox{\footnotesize or}\,\,k_2+k_3=0}}
\frac{i\tau}{18k}\fe^{-it_n\phi} \eta(\tau,k,k_1,k_2,k_3)
\hat v_{1,k_1}\>\hat v_{2,k_2}\>\hat v_{3,k_3} \label{eq:5.6-1}\\
&\quad +\sum_{(k_1,k_2,k_3)\in\Gamma(k)}
\frac{i\tau}{18k}\fe^{-it_n\phi} \eta(\tau,k,k_1,k_2,k_3)
\hat v_{1,k_1}\>\hat v_{2,k_2}\>\hat v_{3,k_3} . \label{eq:5.6-2}
\end{align}
\end{subequations} 
The summation in \eqref{eq:5.6-1} can be estimated as follows, where we focus on the case $k_2+k_3=0$ (the case $k_1+k_3=0$ can be treated in the same way): 
If $k_2+k_3=0$ then $k_1=k$ and therefore 
\begin{align*}
|\eqref{eq:5.6-1}| 
& \lesssim \tau \sum_{k_2+k_3=0}
|k|^{-1} |\eta(\tau,k,k,k_2,k_3)|  \hat v_{1,k}\>\hat v_{2,k_2}\>\hat v_{3,k_3} \\
& \lesssim \tau 
 \F_0[v_2v_3] \, \hat v_{1,k} &&\mbox{(since $|\eta|\lesssim 1$ and $|k|^{-1}\lesssim 1$)} \\
 & \lesssim \tau  \|v_2v_3\|_{L^1} \, \hat v_{1,k} ,
\end{align*}
which implies that 
\begin{align}\label{R2n1}
\big\|\F_k^{-1}[\eqref{eq:5.6-1}]\big\|_{L^2}\lesssim \tau \|v_1\|_{L^2} \|v_2\|_{L^2}\|v_3\|_{L^2}. 
\end{align}

According to Lemma \ref{lem:a4-est} (2), $\Gamma(k)$ can be decomposed into two parts, i.e., $ \Gamma(k)= \Gamma_1(k)\cup \Gamma_2(k)$, with  
\begin{align*} 
\begin{aligned}
&|k|\sim |k_1|\sim|k_2|\sim |k_3| &&\mbox{for}\,\,\, (k_1,k_2,k_3)\in \Gamma_1(k) ,\\
&|\phi|\ge |k_m|^2 = |k_1|^2  &&\mbox{for}\,\,\, (k_1,k_2,k_3)\in \Gamma_2(k). 
\end{aligned}
\end{align*} 
In view of \eqref{eq:5.6-12} we can decompose $R_2^n$ into 
\begin{align}\label{decompose-R2n}
R_2^n = \F_k^{-1}[\eqref{eq:5.6-1}] + w_1 + w_2 ,
\end{align}
with 
$$
w_j=\F_k^{-1} \sum_{(k_1,k_2,k_3)\in\Gamma_j(k)}
\frac{i\tau}{18k}\fe^{-it_n\phi} \eta(\tau,k,k_1,k_2,k_3)
\hat v_{1,k_1}\>\hat v_{2,k_2}\>\hat v_{3,k_3} . 
$$ 
By choosing $\tilde\gamma=0$ in Lemma \ref{lem:eta}, we have 
\begin{align*}
|\F_k[w_1]| 
&\lesssim 
\tau \hspace{-5pt} \sum_{(k_1,k_2,k_3)\in\Gamma_1(k)} \hspace{-5pt}
\big( |k|^{-\frac12}|k_3|^{-\frac12}+ |k_2|^{-\frac12} |k_3|^{-\frac12}\big)
\hat v_{1,k_1} \hat v_{2,k_2} \hat v_{3,k_3} \\
&\lesssim 
\tau \hspace{-5pt} \sum_{k_1+k_2+k_3=k} \hspace{-5pt}
|k_1|^{-\frac13}|k_2|^{-\frac13}|k_3|^{-\frac13} \hat v_{1,k_1} \hat v_{2,k_2} \hat v_{3,k_3} 
\quad\mbox{if}\,\,\, kk_1k_2k_3\ne 0 , 
\end{align*}
where the last inequality uses the equivalence relation $|k|\sim |k_1|\sim|k_2|\sim |k_3| $ for $(k_1,k_2,k_3)\in\Gamma_1(k)$. By applying the Fourier inversion formula and using the Sobolev embedding $L^2(\T)\hookrightarrow W^{-\frac13,6}(\T)$, we have 
\begin{align}\label{R2n2}
\|w_1\|_{L^2} 
&\lesssim 
\tau \||\partial_x|^{-\frac13}v_1\|_{L^6}  \||\partial_x|^{-\frac13}v_2\|_{L^6}  \||\partial_x|^{-\frac13}v_3\|_{L^6} 
\lesssim 
\tau \|v_1\|_{L^2} \|v_2\|_{L^2} \|v_3\|_{L^2} .
\end{align}

For $(k_1,k_2,k_3)\in \Gamma_2(k)\subset\Gamma(k)$ we have $|\phi|=|3(k_1+k_2)(k_1+k_3)(k_2+k_3)|\ne 0$ and $|\phi|\ge |k_m|^2=|k_1|^2$. In this case, by choosing $\alpha=\phi_1$, $\beta=\phi_2$ and $\alpha+\beta=\phi$ in \eqref{average2-2}, we obtain
$$
\tau |k|^{-1} |\eta(\tau,k,k_1,k_2,k_3)|
\lesssim \tau |k|^{-1} \tau^{-1}|\phi|^{-1} 
\lesssim |k_1|^{-2} .
$$
By choosing $\tilde\gamma=0$ in Lemma \ref{lem:eta} we also have
$$
\tau |k|^{-1} |\eta(\tau,k,k_1,k_2,k_3)|
\lesssim \tau (|k|^{-\frac12} |k_3|^{-\frac12}+ |k_2|^{-\frac12} |k_3|^{-\frac12}) .
$$
The geometric average of the two inequalities above yields that 
\begin{align*}
\tau |k|^{-1} |\eta(\tau,k,k_1,k_2,k_3)|
&\lesssim \tau \tau^{-\theta}|k_1|^{-2\theta} ( |k|^{-\frac12+\frac{\theta}{2}} |k_3|^{-\frac12+\frac{\theta}{2}}+ |k_2|^{-\frac12+\frac{\theta}{2}} |k_3|^{-\frac12+\frac{\theta}{2}} ) \\
&\lesssim \tau \tau^{-\theta}( |k|^{-\frac12-\frac{\theta}{2}} |k_3|^{-\frac12-\frac{\theta}{2}}+ |k_2|^{-\frac12-\frac{\theta}{2}} |k_3|^{-\frac12-\frac{\theta}{2}} ) \quad\forall\,\theta\in[0,1] . 
\end{align*}
Then we can apply Lemma \ref{lem:ln-loss} with $A=\tau^{-2}$ and $\theta_0=\frac12$, which implies that 
\begin{align}\label{R2n3}
\|w_2\|_{L^2} 
&\lesssim 
\tau\ln(1/\tau)  \|v_1\|_{L^2} \|v_2\|_{L^2} \|v_3\|_{L^2}  . 
\end{align}
Finally, substituting estimates \eqref{R2n1}, \eqref{R2n2} and \eqref{R2n3} into \eqref{decompose-R2n}, we obtain 
$$
\|R^n_2\|_{L^2}\lesssim \tau\ln(1/\tau) \|v_1\|_{L^2} \|v_2\|_{L^2}\|v_3\|_{L^2} .
$$
This, together with \eqref{est:R2n+0} for the case $\gamma\in(0,1]$, gives the desired estimate in (1).\bigskip

\noindent{\it Proof of (2)}: 
By choosing $\tilde\gamma=0$ in \eqref{est:R2n}, we have  
\begin{align*}
|\mathcal{F}_k[ R_2^n ]|
&\lesssim 
\tau\sum_{k_1+k_2+k_3=k}
\big( |k|^{-\frac12}|k_3|^{-\frac12}+|k_2|^{-\frac12} |k_3|^{-\frac12}\big)
\hat v_{1,k_1}\hat v_{2,k_2}\hat v_{3,k_3}\\
&=\tau\mathcal{F}_k\Big( |\partial_x|^{-\frac12}\big(v_1\>v_2\>|\partial_x|^{-\frac12}v_3\big)+v_1\>|\partial_x|^{-\frac12}v_2\>|\partial_x|^{-\frac12}v_3\Big).
\end{align*}
Therefore, by the Plancherel identity and the Sobolev inequality, we obtain that 
\begin{align*}
\|R^n_2\|_{L^2}
\lesssim &\tau  \big\||\partial_x|^{-\frac12}\big(v_1\>v_2\>|\partial_x|^{-\frac12}v_3\big)\big\|_{L^2}
+\tau  \big\|v_1\>|\partial_x|^{-\frac12}v_2\>|\partial_x|^{-\frac12}v_3\big\|_{L^2}\\
\lesssim & \tau \big\|v_1\>v_2\>|\partial_x|^{-\frac12}v_3\big\|_{L^{1+}}+\tau \big\|v_1\>|\partial_x|^{-\frac12}v_2\>|\partial_x|^{-\frac12}v_3\big\|_{L^2}\\
\lesssim & \tau \big\|v_1\big\|_{L^2}\big\|v_2\big\|_{L^{2+}}\big\||\partial_x|^{-\frac12}v_3\big\|_{L^\infty}+\tau \big\|v_1\big\|_{L^2}\big\||\partial_x|^{-\frac12}v_2\big\|_{L^\infty}\big\||\partial_x|^{-\frac12}v_3\big\|_{L^\infty}\\
\lesssim & \tau \big\|v_1\big\|_{L^2}\big\|v_2\big\|_{H^{\gamma}}\big\|v_3\big\|_{H^{\gamma}}.
\end{align*}
This proves the desired result in (2). 
\end{proof}

By choosing $v_1=v_2=v_3=v^n$ in Proposition \ref{prop:est-R-n_2}, we obtain the following estimate for the local error $R_2^n[v^n]$. 

\begin{corollary}\label{cor:est-R_n_2} 
Under the assumptions of Theorem \ref{main:thm1}, the following estimate holds:  
$$
\|R_2^n[v^n]\|_{L^2}\lesssim \tau^{1+\gamma}\ln(1/\tau) + \tau\big(\|e^n\|_{L^2}+|\ln(1/\tau)|^2\|e^n\|_{L^2}^3\big).
$$
\end{corollary}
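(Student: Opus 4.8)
The plan is to write $v^n = v(t_n) - e^n$ and exploit the multilinearity of the form $R_2^n[\cdot,\cdot,\cdot]$ defined in \eqref{def-R2n-v1v2v3}. Since $R_2^n[v^n] = R_2^n[v^n,v^n,v^n]$ and each slot is linear, the binomial expansion produces eight terms: the leading term $R_2^n[v(t_n),v(t_n),v(t_n)]$, three terms carrying exactly one factor $e^n$ (such as $R_2^n[e^n,v(t_n),v(t_n)]$), three terms carrying exactly two factors $e^n$, and the cubic term $R_2^n[e^n,e^n,e^n]$. Throughout I use that $\|v(t_n)\|_{H^\gamma} = \|u(t_n)\|_{H^\gamma} \le \|u\|_{C([0,T];H^\gamma)}$, since $\fe^{t\partial_x^3}$ is an isometry on $H^\gamma$, so the $H^\gamma$ norms of $v(t_n)$ are absorbed into the generic constant.

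For the leading term I invoke Proposition \ref{prop:est-R-n_2}(1) with the given $\gamma\in(0,1]$, which yields a bound $\lesssim \tau^{1+\gamma}\ln(1/\tau)$. For each of the three single-$e^n$ terms I use Proposition \ref{prop:est-R-n_2}(2) together with its permutation statement, placing the rough factor $e^n$ in the $L^2$ slot and the two smooth factors $v(t_n)$ in the $H^\gamma$ slots; this gives $\lesssim \tau\|e^n\|_{L^2}$. The crucial point here is that part (2) carries no logarithmic loss, so the single-$e^n$ contributions match the desired form $\tau\|e^n\|_{L^2}$ exactly; using part (1) at $\gamma=0$ instead would produce a spurious factor $\ln(1/\tau)$.

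For the two-$e^n$ terms one can no longer apply part (2), which requires two factors controlled in $H^\gamma$ whereas only $v(t_n)$ is smooth. I therefore fall back on Proposition \ref{prop:est-R-n_2}(1) with $\gamma=0$, treating all three factors in $L^2$, to obtain a bound $\lesssim \tau\ln(1/\tau)\|e^n\|_{L^2}^2$; the same reasoning bounds the cubic term by $\tau\ln(1/\tau)\|e^n\|_{L^2}^3$, which is dominated by $\tau|\ln(1/\tau)|^2\|e^n\|_{L^2}^3$ for $\tau\le\tau_0$. The remaining step is to reshape the quadratic bound into the target form. By the arithmetic--geometric mean inequality applied to the factorization
$$
\ln(1/\tau)\,\|e^n\|_{L^2}^2 = \Big(\|e^n\|_{L^2}\Big)^{1/2}\Big(|\ln(1/\tau)|^2\|e^n\|_{L^2}^3\Big)^{1/2} \le \tfrac12\Big(\|e^n\|_{L^2} + |\ln(1/\tau)|^2\|e^n\|_{L^2}^3\Big),
$$
the two-$e^n$ contribution is absorbed into $\tau\|e^n\|_{L^2} + \tau|\ln(1/\tau)|^2\|e^n\|_{L^2}^3$. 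Collecting the eight estimates yields the claimed bound.

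The argument is essentially bookkeeping once Proposition \ref{prop:est-R-n_2} is in hand, and I do not expect a deep obstacle. The only genuine decision is selecting the correct part of Proposition \ref{prop:est-R-n_2} for each group of terms: part (2) for the single-$e^n$ terms precisely to avoid the logarithm that would otherwise spoil the $\tau\|e^n\|_{L^2}$ estimate, and part (1) at $\gamma=0$ for the terms with two or three rough factors, where no $H^\gamma$ control on $e^n$ is available. The accompanying AM--GM splitting is the one small computational trick needed to match the stated right-hand side.
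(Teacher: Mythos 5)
Your proposal is correct and follows essentially the same route as the paper: multilinear expansion of $R_2^n[v(t_n)-e^n]$ into eight terms, Proposition \ref{prop:est-R-n_2}(1) with the given $\gamma$ for the leading term, Proposition \ref{prop:est-R-n_2}(2) for the three single-$e^n$ terms, and Proposition \ref{prop:est-R-n_2}(1) at $\gamma=0$ for the quadratic and cubic terms, with the same Young/AM--GM splitting $\tau\ln(1/\tau)\|e^n\|_{L^2}^2\lesssim \tau\|e^n\|_{L^2}+\tau|\ln(1/\tau)|^2\|e^n\|_{L^2}^3$ to absorb the quadratic contribution. No gaps.
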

\begin{proof}
The assumptions of Theorem \ref{main:thm1} guarantees that $\|v(t_n)\|_{H^\gamma}\lesssim 1$. 
By substituting the expression $v^n=v(t_n)-e^n$ into the trilinear form $R_2^n[v^n]$, we obtain 
\begin{align*}
R^n_2[v^n]
= R^n_2[v(t_n)] & + R^n_2[e^n,v(t_n),v(t_n)] + R^n_2[v(t_n),e^n,v(t_n)] + R^n_2[v(t_n),v(t_n),e^n] \\
& +R^n_2[e^n,e^n,v(t_n)]+R^n_2[e^n,v(t_n),e^n] +R^n_2[v(t_n),e^n,e^n]
- R^n_2[e^n,e^n,e^n] .
\end{align*}
We apply Proposition \ref{prop:est-R-n_2} (1) to the first term and last four terms on the right-hand side. This yields 
\begin{align*}
\|R^n_2[v(t_n)]\|_{L^2} &\lesssim \tau^{1+\gamma}\ln(1/\tau) \|v(t_n)\|_{H^\gamma}^3  , \\
\|R^n_2[e^n,e^n,v(t_n)]\|_{L^2} &\lesssim \tau\ln(1/\tau) \|e^n\|_{L^2}^2 \|v(t_n)\|_{L^2} 
\lesssim \tau \|e^n\|_{L^2} + \tau |\ln(1/\tau)|^2 \|e^n\|_{L^2}^3  ,\\
\|R^n_2[e^n,v(t_n),e^n]\|_{L^2} &\lesssim \tau\ln(1/\tau) \|e^n\|_{L^2}^2 \|v(t_n)\|_{L^2} \lesssim \tau \|e^n\|_{L^2} + \tau |\ln(1/\tau)|^2 \|e^n\|_{L^2}^3  ,\\
\|R^n_2[v(t_n),e^n,e^n]\|_{L^2} &\lesssim \tau\ln(1/\tau) \|e^n\|_{L^2}^2 \|v(t_n)\|_{L^2} \lesssim \tau \|e^n\|_{L^2} + \tau |\ln(1/\tau)|^2 \|e^n\|_{L^2}^3  ,\\
\|R^n_2[e^n,e^n,e^n]\|_{L^2} &\lesssim \tau\ln(1/\tau) \|e^n\|_{L^2}^3  .
\end{align*}
Furthermore, we apply Proposition \ref{prop:est-R-n_2} (2) to the rest terms in the above expression of $R^n_2[v^n]$. Then we obtain the desired result in Corollary \ref{cor:est-R_n_2}. 
\end{proof}

\section{Estimates for the global remainder ${\mathcal{R}(t)}$}\label{section:global-error}

The main result of this section is the following proposition, where ${\mathcal{R}(t)}$ is defined in \eqref{def-R(t)}. 

\begin{proposition}\label{prop:est-R} 
Under the assumptions of Theorem \ref{main:thm1}, the following estimate holds: 
$$
\|\mathcal R(t) \|_{L^2}\lesssim t_{n+1}\tau^\gamma\ln(1/\tau)
+ t_{n+1} \max_{0\le j\le n} \big(\|e^j\|_{L^2}+|\ln(1/\tau)|^7 \|e^j\|_{L^2}^8\big)
\quad\mbox{for}\,\,\, t\in(t_n,t_{n+1}]. 
$$
\end{proposition}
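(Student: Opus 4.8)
The plan is to bound $\mathcal R(t)$ by estimating the four pieces $\mathcal R^*_1(t),\dots,\mathcal R^*_4(t)$ of the decomposition in \eqref{def-R(t)} separately, arranging the bookkeeping so that each of the roughly $n+1\lesssim t_{n+1}/\tau$ subintervals contributes a local quantity of size $\tau^{1+\gamma}\ln(1/\tau)$ plus $\tau$ times powers of $\|e^j\|_{L^2}$. Summing over $j$ and using $n\tau\le t_{n+1}$, followed by replacing the sums of $e^j$-terms by $t_{n+1}$ times their maximum over $0\le j\le n$, then produces the two terms on the right-hand side. Throughout I would use the a priori bound $\|v(t_j)\|_{H^\gamma}\lesssim 1$ from the hypotheses of Theorem \ref{main:thm1} and systematically substitute $v^j=v(t_j)-e^j$ to separate the data contribution from the error contribution.

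I would first dispose of $\mathcal R^*_2(t)$ and $\mathcal R^*_4(t)$, which are the most direct. Their $R^j_2[v^j]$ parts are controlled immediately by Corollary \ref{cor:est-R_n_2}, each contributing exactly $\tau^{1+\gamma}\ln(1/\tau)+\tau(\|e^j\|_{L^2}+|\ln(1/\tau)|^2\|e^j\|_{L^2}^3)$. The remaining quartic pieces $\frac12\int_{t_j}^{t_{j+1}}\fe^{s\partial_x^3}\partial_x(\fe^{-s\partial_x^3}F^j[s;v^j])^2\,\d s$ I would handle by substituting the explicit integral form \eqref{def-Fn} of $F^j[s;v^j]$ (which carries a $\partial_x^{-1}$ smoothing and is bilinear in $v^j$), turning the outer bilinear operator into a genuine trilinear-in-$v^j$ object; after one integration by parts in time via Lemma \ref{lem:1-form} this matches the canonical form $\mathcal C$ of \eqref{C-KdV}, and Proposition \ref{lem:tri-linear} then extracts the gain $|s-s_0|^\gamma=\tau^\gamma$ together with a factor $\tau$ from the interval length.

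The heart of the argument is $\mathcal R^*_1(t)$ and $\mathcal R^*_3(t)$, which involve the correction $w^j(s):=\mathscr V(s)-v^j-F^j[s;v^j]$. I would expand $w^j$ from \eqref{def:v-s-num-222}: its leading part is the trilinear object $\frac12\int_{t_j}^s\fe^{t\partial_x^3}\partial_x(2\,\fe^{-t\partial_x^3}v^j\cdot\fe^{-t\partial_x^3}F^j[t;v^j]+(\fe^{-t\partial_x^3}F^j[t;v^j])^2)\,\d t$ plus the $\frac{s-t_j}{\tau}$ terms, so that $w^j$ is at least cubic in $v^j$ and carries one extra power of $\tau$ from the inner integration. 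Using the perturbed integral equation \eqref{def:v-s-num-2} I would establish a one-step a priori bound $\|\mathscr V(s)\|_{L^2}\lesssim 1+\|e^j\|_{L^2}+\cdots$ on $(t_j,t_{j+1}]$, and bound $w^j$ (and $\partial_t w^j$) in the negative-regularity norm $H^{-\frac{23}{14}}$ required to feed Proposition \ref{lem:tri-linear}(1). For the outer operator $\int\fe^{s\partial_x^3}\partial_x(\fe^{-s\partial_x^3}w^j\cdot\fe^{-s\partial_x^3}(\mathscr V+v^j+F^j))\,\d s$ I would freeze the slowly varying factors at $v^j$ (splitting off the $O(\tau)$ differences), so that the purely data-driven leading term is time-independent and is estimated by Proposition \ref{lem:tri-linear}(2) with $\alpha=\gamma$, while the $e^j$-dependent and time-derivative remainders are treated by Proposition \ref{lem:tri-linear}(1) with the rough $e^j$-factors placed in the $L^2$ slots and the small, smoothed correction in the $H^{-\frac{23}{14}}$ slot.

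The main obstacle will be this last reduction: matching the nested, derivative-carrying double time integral in $\mathcal R^*_1,\mathcal R^*_3$ to the canonical trilinear form $\mathcal C$ so that the derivative loss from the outer $\partial_x$ is absorbed by the $\partial_x^{-1}$ smoothing already present inside $w^j$ and $F^j$, while keeping the regularity budget $\sigma-\sigma_1-\sigma_2-\sigma_3=\tfrac{23}{14}$ of Proposition \ref{lem:tri-linear} balanced so that the rough factors need only be measured in $H^\gamma$ with $\gamma$ arbitrarily small. A secondary difficulty is the bookkeeping of the $e^j$-powers: because $\mathscr V$ occurs both inside $w^j$ and again in the outer factor, the substitution $v^j=v(t_j)-e^j$ generates terms of degree up to eight in $e^j$, which is the source of the $|\ln(1/\tau)|^7\|e^j\|_{L^2}^8$ term. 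I would control these by Young's inequality and the a priori smallness of $\|e^j\|_{L^2}$ supplied in the global argument of Section \ref{sec:globalstability}, without attempting to optimize the exponents, since the estimate need only be sufficient rather than sharp.
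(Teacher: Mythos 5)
Your overall architecture is the same as the paper's: you split $\mathcal R(t)$ into $\mathcal R^*_1,\dots,\mathcal R^*_4$, treat $\mathcal R^*_3,\mathcal R^*_4$ as sums over $j$ of one-step quantities that must each be $O(\tau^{1+\gamma}\ln(1/\tau))+O(\tau\cdot(\text{error terms}))$, invoke Corollary \ref{cor:est-R_n_2} for the $R_2^j[v^j]$ pieces, and attack the hard terms by first proving one-step bounds on $w^j=\mathscr V-v^j-F^j[\cdot;v^j]$ and $\partial_t w^j$ in $L^2$ and in negative Sobolev norms (these are exactly the paper's Lemma \ref{lem:V-increm-est}, resting on the bilinear bounds of Lemma \ref{lem:est-Fn}), then using Lemma \ref{lem:1-form} to trade the outer $\partial_x$ for the $\partial_x^{-1}$ smoothing and feeding the result into Proposition \ref{lem:tri-linear}. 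The bookkeeping of the degree-$8$ and $|\ln(1/\tau)|^7$ factors is also as in the paper.

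The one step that would fail as described is your treatment of the quartic piece $\frac12\int_{t_j}^{t_{j+1}}\fe^{s\partial_x^3}\partial_x\big(\fe^{-s\partial_x^3}F^j[s;v^j]\big)^2\d s$ in $\mathcal R^*_2$ and $\mathcal R^*_4$. After one integration by parts this does reduce (via $\partial_x^{-1}\partial_sF^j=\frac12\fe^{s\partial_x^3}\P(\fe^{-s\partial_x^3}v^j)^2$) to the canonical form $\mathcal C(v^j,v^j,F^j[\cdot;v^j])$ plus a boundary term, but Proposition \ref{lem:tri-linear}(1) then only yields $\tau\|v^j\|_{X^0}^2\|F^j\|_{X^0}+\|F^j\|_{L^\infty H^{-23/14}}\|v^j\|_{L^2}^2\lesssim\tau$, since $\|\partial_tF^j\|_{H^{-23/14}}\lesssim\|v^j\|_{L^2}^2\lesssim1$ and $\|F^j\|_{H^{-23/14}}\lesssim\tau$; part (2) is unavailable because $F^j[s;v^j]$ is time-dependent, and freezing it at $s=t_{j+1}$ still gives only $\tau^{\gamma}\|F^j\|_{H^\gamma}\ll\tau^{1+\gamma}$ for small $\gamma$. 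An $O(\tau)$ per-step bound sums to $O(t_{n+1})$, which misses the required $t_{n+1}\tau^\gamma\ln(1/\tau)$. The paper avoids this by \emph{not} integrating by parts here: it bounds the integrand directly via \eqref{lem:ga-aga}, $\|\partial_x(F^j)^2\|_{L^2}\lesssim\|\partial_xF^j\|_{H^\gamma}\|F^j\|_{H^{a(\gamma)}}\lesssim\tau^\gamma$, using Lemma \ref{lem:est-Fn} at the two exponents $\beta=1+\gamma$ and $\beta=a(\gamma)$, so the time integral contributes $\tau^{1+\gamma}$ per step. Your route can be repaired in the same spirit (the interpolated negative-Sobolev bounds on $F^j$ are strong enough for the boundary term), but the trilinear machinery alone does not close this sub-estimate.
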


\begin{proof}
For $t\in(t_n,t_{n+1}]$, we consider the expression of $\mathcal{R}(t)$ in \eqref{def-R(t)} and estimate $\mathcal R^*_1(t)$, $\mathcal R^*_2(t)$, $\mathcal R^*_3(t)$ and $\mathcal R^*_4(t)$ separately in the following subsections.

\subsection{Estimation of $\mathcal R^*_1(t)$}

In view of the definition in \eqref{def-R(t)}, we can decompose $\mathcal R^*_1(t)$ into the following two parts:  
\begin{align*}
\mathcal R^*_1(t)=
  &- \int_{t_n}^t \fe^{s\partial_x^3}\partial_x\left(\fe^{-s\partial_x^3}\big(\mathscr V(s)-v^n-F^n[s;v^n]\big)\cdot \fe^{-s\partial_x^3}\big(v^n+F^n[s;v^n]\big)\right)\,\d s\\
&-\frac12 \int_{t_n}^t \fe^{s\partial_x^3}\partial_x\left(\fe^{-s\partial_x^3}\big(\mathscr V(s)-v^n-F^n[s;v^n]\big)\right)^2\,\d s.
\end{align*}
Then we can apply the integration-by-parts formula in Lemma \ref{lem:1-form} with $\mathcal V(t_n)=v^n$ and $F^n[t_n;v^n]=0$. This yields the following expression of $\mathcal R^*_1(t)$: 
\begin{align}\label{def-R11234}
\mathcal R^*_1(t)
=&- \frac13 \fe^{t\partial_x^3}\P\left(e^{-t\partial_x^3}\partial_x^{-1}\big(\mathscr V(t)-v^n-F^n[t;v^n]\big)\cdot e^{-t\partial_x^3}\partial_x^{-1}\big(v^n+F^n[t;v^n]\big)\right) \notag\\
  &+\frac13\int_{t_n}^t \fe^{s\partial_x^3}\P\left(\fe^{-s\partial_x^3}\partial_x^{-1}\partial_s\big(\mathscr V(s)-v^n-F^n[s;v^n]\big)\cdot \fe^{-s\partial_x^3}\partial_x^{-1}\big(v^n+F^n[s;v^n]\big)\right)\,\d s \notag\\
   &+\frac13\int_{t_n}^t \fe^{s\partial_x^3}\P\left(\fe^{-s\partial_x^3}\partial_x^{-1}\big(\mathscr V(s)-v^n-F^n[s;v^n]\big)\cdot \fe^{-s\partial_x^3}\partial_x^{-1}\partial_sF^n[s;v^n]\right)\,\d s \notag\\
&-\frac12 \int_{t_n}^t \fe^{s\partial_x^3}\partial_x\left(\fe^{-s\partial_x^3}\big(\mathscr V(s)-v^n-F^n[s;v^n]\big)\right)^2\,\d s \notag\\
= &\!: \mathcal R^*_{11}(t)+\mathcal R^*_{12}(t)+\mathcal R^*_{13}(t)+\mathcal R^*_{14}(t).
\end{align}

As an extended notation of the function $F^n[s;v]$ defined in \eqref{def-Fn}, we consider the following bilinear form (for  time-independent functions $v_1,v_2$ such that $\P_0v_1=\P_0v_2=0$): 
\begin{align}
F^n[s;v_1,v_2]  
:\!&= \frac{1}{2}\int_{t_n}^s \fe^{t\partial_x^3}
  \partial_x\big(\fe^{-t\partial_x^3}v_1\>\fe^{-t\partial_x^3}v_2\big)\d t \label{def-Fn-1-2}  \\
  &=\frac16\fe^{t\partial_x^3}\Big(\fe^{-t\partial_x^3}\partial_x^{-1}v_1\>\fe^{-t\partial_x^3}\partial_x^{-1}v_2\Big)\Big|_{t=t_n}^{t=s}
  \label{def-Fn-22} \quad\mbox{for}\,\,\, s\in[t_n,t_{n+1}] , 
\end{align}
where the last equality is obtained by using the integration-by-parts formula in Lemma \ref{lem:1-form}. This extended notation satisfies that $F^n[s;v,v]  =F^n[s;v]$. The following bilinear estimate for $F^n[s;v_1,v_2]$ will be used.  

\begin{lemma}\label{lem:est-Fn}
For $v_1,v_2\in H^\gamma$ with $\gamma\in(0,1]$, and $s\in[t_n,t_{n+1}]$, the following result holds:
$$
\big\|F^n\big[s;v_1,v_2\big]\big\|_{H^\beta}
\lesssim \tau^{\frac{1+\gamma-\beta}{1+\gamma-\beta_0(\gamma)}} \|v_1\|_{H^\gamma} \|v_2\|_{H^\gamma}
\quad\mbox{for\, $\beta\in [\beta_0(\gamma), 1+\gamma]$} , 
$$
where
$$
\beta_0(\gamma)
= \left\{\begin{aligned}
&2\gamma-\frac32 && \mbox{when } \gamma\in \Big(0,\frac12\Big) \\
&-\frac12- && \mbox{when } \gamma=\frac12\\ 
&\gamma-1 && \mbox{when } \gamma\in \Big(\frac12,1 \Big] .
\end{aligned}\right. 
$$
\end{lemma}
\begin{proof}
On one hand, we consider the $ H^{\beta_0(\gamma)}$ norm of the expression in \eqref{def-Fn-1-2} and prove the following result: 
\begin{align}
\big\|F^n\big[t;v_1,v_2\big]\big\|_{H^{\beta_0(\gamma)}}
\lesssim \tau \|v_1\|_{H^\gamma} \|v_2\|_{H^\gamma}.
\end{align}
Indeed,  the expression in \eqref{def-Fn-1-2} gives us the following inequality:  
\begin{align*}
\| F^n[s;v_1,v_2] \|_{H^{\beta_0(\gamma)}}
\lesssim &\, \int_{t_n}^s \big\| \fe^{-t\partial_x^3}v_1\>\fe^{-t\partial_x^3}v_2\big\|_{H^{\beta_0(\gamma)+1}} \d t .
\end{align*}
If $\gamma\in(0,\frac14]$ then $2\gamma-\frac12\le 0$. In this case, we apply the Sobolev embedding $H^\gamma\hookrightarrow L^p$ and $L^{\frac{p}{2}}\hookrightarrow H^{-\frac12+2\gamma}$ with $p=2/(1-2\gamma)$, i.e.,  
\begin{align*}
\| F^n[s;v_1,v_2] \|_{H^{\beta_0(\gamma)}}
\lesssim &\, \int_{t_n}^s
\big\| \fe^{-t\partial_x^3}v_1\>\fe^{-t\partial_x^3}v_2\big\|_{L^{\frac{p}{2}}} \d t \\
\lesssim &\, \int_{t_n}^s
\big\| \fe^{-t\partial_x^3}v_1\big\|_{L^p} \big\|\fe^{-t\partial_x^3}v_2\big\|_{L^p} \d t \\
\lesssim &\, \int_{t_n}^s
\big\| \fe^{-t\partial_x^3}v_1\big\|_{H^\gamma} \big\|\fe^{-t\partial_x^3}v_2\big\|_{H^\gamma} \d t \\
\lesssim &\, \tau \|v_1\|_{H^\gamma}\|v_2\|_{H^\gamma} . 
\end{align*}
If $\gamma\in(\frac14,\frac12)$ then $0<-\frac12+2\gamma\le \gamma$. In this case, we apply the Kato--Ponce inequality in Lemma \ref{lem:kato-Ponce} and the Sobolev embeddings $H^\gamma\hookrightarrow W^{-\frac12+2\gamma,p_1}$, $H^\gamma\hookrightarrow L^{p_2}$ with $\frac12-\gamma=\frac12-\frac{1}{p_1}$ and $\frac{1}{p_1}+\frac{1}{p_2}=\frac12$, i.e., 
\begin{align*}
\| F^n[s;v_1,v_2] \|_{H^{\beta_0(\gamma)}}
\lesssim &\, \int_{t_n}^s\big\| \fe^{-t\partial_x^3}v_1\big\|_{W^{-\frac12+2\gamma,p_1}} \big\|\>\fe^{-t\partial_x^3}v_2\|_{L^{p_2}} \d t \\
&\,
+ \int_{t_n}^s \big\| \fe^{-t\partial_x^3}v_1\big\|_{L^{p_2}} \big\|\>\fe^{-t\partial_x^3}v_2\|_{W^{-\frac12+2\gamma,p_1}} \d t \\
\lesssim &\, \int_{t_n}^s \big\| \fe^{-t\partial_x^3}v_1\big\|_{H^\gamma} \big\|\>\fe^{-t\partial_x^3}v_2\|_{H^\gamma} \d t \\
\lesssim &\, \tau \|v_1\|_{H^\gamma}\|v_2\|_{H^\gamma} . 
\end{align*}
If $\gamma=\frac12$, then $\beta_0(\gamma)+1=\frac12-$ and  therefore, by the Kato--Ponce inequality in Lemma \ref{lem:kato-Ponce} and the Sobolev embedding $H^\frac12\hookrightarrow L^p$ for all $p\in[1,\infty)$, we have 
\begin{align*}
\| F^n[s;v_1,v_2] \|_{H^{\beta_0(\gamma)}}
\lesssim &\, \int_{t_n}^s \big\| \fe^{-t\partial_x^3}v_1\>\fe^{-t\partial_x^3}v_2\big\|_{H^{\frac12-}} \d t \\
\lesssim &\, \int_{t_n}^s \big\| \fe^{-t\partial_x^3}v_1\big\|_{H^{\frac12}}
\big\| \fe^{-t\partial_x^3}v_2\big\|_{H^{\frac12}} \d t \\
\lesssim &\, \tau \|v_1\|_{H^{\frac12}}\|v_2\|_{H^{\frac12}} . 
\end{align*}
If $\gamma\in(\frac12,1]$, then $\beta_0(\gamma)+1=\gamma$ and therefore, by the Kato--Ponce inequality in Lemma \ref{lem:kato-Ponce} and the Sobolev embedding $H^\gamma\hookrightarrow L^\infty$, we have 
\begin{align*}
\| F^n[s;v_1,v_2] \|_{H^{\beta_0(\gamma)}}
\lesssim &\, \int_{t_n}^s \big\| \fe^{-t\partial_x^3}v_1\>\fe^{-t\partial_x^3}v_2\big\|_{H^{\gamma}} \d t \\
\lesssim &\, \int_{t_n}^s \big\| \fe^{-t\partial_x^3}v_1\big\|_{H^{\gamma}}
\big\| \fe^{-t\partial_x^3}v_2\big\|_{H^{\gamma}} \d t \\
\lesssim &\, \tau \|v_1\|_{H^\gamma}\|v_2\|_{H^\gamma} . 
\end{align*}

On the other hand, we consider the $H^{1+\gamma}$ norm of the expression in \eqref{def-Fn-22}, which gives us the following inequality:  
$$
\|F^n[s;v_1,v_2]\|_{H^{1+\gamma}}
\lesssim 
\|v_1\|_{H^\gamma}\|v_2\|_{H^\gamma} .
$$

By considering the complex interpolation between the estimates for $\|F^n[s;v_1,v_2]\|_{H^{\beta_0(\gamma)}}$ and $ \|F^n[s;v_1,v_2]\|_{H^{1+\gamma}}$, we obtain the result of Lemma \ref{lem:est-Fn}. 
\end{proof}

By using the result of Lemma \ref{lem:est-Fn}, we manage to obtain the following several useful estimates for $\mathscr V(t)-v^n-F^n[t;v^n]$. 

\begin{lemma}\label{lem:V-increm-est} 
Under the assumptions of Theorem \ref{main:thm1}, the following estimates hold for $t\in [t_n,t_{n+1}]${\rm:} 
\begin{itemize}
\item[(1)] 
$
\big\|\partial_x^{-1}\big(\mathscr V(t)-v^n-F^n[t;v^n]\big)\big\|_{L^2}
\lesssim \tau^{1+\gamma}\ln(1/\tau)+ \tau\big(\|e^n\|_{L^2}+|\ln(1/\tau)|^3\|e^n\|_{L^2}^4\big).
$
\item[(2)] 
$
\big\|\partial_x^{-1}\partial_t\big(\mathscr V(t)-v^n-F^n[t;v^n]\big)\big\|_{L^2}\lesssim \tau^{\gamma}\ln(1/\tau)+ \big(\|e^n\|_{L^2}+|\ln(1/\tau)|^3\|e^n\|_{L^2}^4\big) .
$
\item[(3)] 
$
\big\|\mathscr V(t)-v^n-F^n[t;v^n]\big\|_{L^2}\lesssim 
\tau^{\gamma} + \|e^n\|_{L^2}+\|e^n\|_{L^2}^4 .
$

\end{itemize}
\end{lemma}

\begin{proof}
By comparing the expressions of $\mathscr V(t)$ and $F^n[t;v^n]$ in \eqref{def:v-s-num} and \eqref{def-Fn-1}, respectively, we can derive the following expression:  
\begin{subequations}\label{eq:6.3}
\begin{align}
\mathscr V(t)-v^n-F^n[t;v^n] 
=&\, 
 \int_{t_n}^t \fe^{s\partial_x^3}\partial_x\left(\fe^{-s\partial_x^3}v^n\cdot \fe^{-s\partial_x^3}F^n[s;v^n]\Big)\right)\,\d s\label{V-v-F-1}\\
&\, + \frac12 \int_{t_n}^t \fe^{s\partial_x^3}\partial_x\left(\fe^{-s\partial_x^3}F^n[s;v^n]\Big)\right)^2\,\d s \label{V-v-F-2} \\
&\, -\frac{t-t_n}{\tau}\left[\frac12 \int_{t_n}^{t_{n+1}} \fe^{s\partial_x^3}\partial_x\left(\fe^{-s\partial_x^3}F^n[s;v^n]\right)^2\,\d s+R^n_2[v^n]\right].\label{V-v-F-3}
\end{align}
\end{subequations}

{\it Proof of (1)}: 
Although the three terms in \eqref{V-v-F-1}, \eqref{V-v-F-2} and \eqref{V-v-F-3} should be estimated separately, 
we focus on the estimation of \eqref{V-v-F-1} as the other terms can be treated similarly. 

By applying $\partial_x^{-1}$ to \eqref{V-v-F-1} and substituting $v^n=v(t_n)-e^n$ into the result, we obtain 
\begin{subequations}\label{6.3a-12}
\begin{align}
\partial_x^{-1}\eqref{V-v-F-1}=& \int_{t_n}^t \fe^{s\partial_x^3}\P\left(\fe^{-s\partial_x^3}v(t_n)\cdot \fe^{-s\partial_x^3}F^n[s;v^n]\Big)\right)\,\d s\label{V-v-F-1-1}\\
&- \int_{t_n}^t \fe^{s\partial_x^3}\P\left(\fe^{-s\partial_x^3}e^n\cdot \fe^{-s\partial_x^3}F^n[s;v^n]\right)\,\d s\label{V-v-F-1-2}.
\end{align}
\end{subequations}
The expression in \eqref{V-v-F-1-1} can be estimated by applying inequality \eqref{lem:ga-aga}, i.e.,   
\begin{align}
\|\eqref{V-v-F-1-1}\|_{L^2}
&\lesssim \tau\|v(t_n)\|_{H^\gamma} \sup_{s\in[t_n,t_{n+1}]} \|F^n[s;v^n]\|_{H^{a(\gamma)}} ,  \label{vF-vF-F1}
\end{align}
where $\|F^n[s; v^n]\|_{H^{a(\gamma)}} $ can be decomposed into the following two parts using the triangle inequality:  
\begin{align} \label{L2-Fsvn-tri}
\|F^n[s; v^n]\|_{H^{a(\gamma)}} 
&\le \|F^n[s; v(t_n)]\|_{H^{a(\gamma)}}+\|F^n[s; v^n]-F^n[s; v(t_n)]\|_{H^{a(\gamma)}}. 
\end{align}

The first term on the right-hand side of \eqref{L2-Fsvn-tri} can be estimated by choosing $\beta=a(\gamma)$ in Lemma \ref{lem:est-Fn}, which implies that   
\begin{align}\label{est:F-a-alpha}
\|F^n[s;v(t_n)]\|_{H^{a(\gamma)}}
\lesssim \tau^{\frac{1+\gamma-a(\gamma)}{1+\gamma-\beta_0(\gamma)}} \|v(t_n)\|_{H^\gamma}^2
\lesssim \tau^\gamma \|v(t_n)\|_{H^\gamma}^2 ,
\end{align}
where the last inequality follows from the fact that $\frac{1+\gamma-a(\gamma)}{1+\gamma-\beta_0(\gamma)} \ge \gamma$ for $\gamma\in(0,1]$ for the expression of $a(\gamma)$ in \eqref{def:a-gamma}. 

The  second term on the right-hand side of \eqref{L2-Fsvn-tri} can be estimated as follows: 
\begin{align}\label{est:F-c-alpha}
\|F[s; v^n]-F[s; v(t_n)]\|_{H^{a(\gamma)}} 
&=\|F[s; e^n,2v(t_n)-e^n]\|_{H^{a(\gamma)}} \notag\\
&\lesssim \|F[s; e^n,2v(t_n)-e^n]\|_{H^{1}} \qquad\mbox{(since $a(\gamma)\le 1$)}\notag\\
&\lesssim \|e^n\|_{L^2}(\|v(t_n)\|_{L^2} + \|e^n\|_{L^2}) , 
\end{align}
where the last inequality can be obtained from expression \eqref{def-Fn-22} directly. 
Hence, by substituting \eqref{est:F-a-alpha}--\eqref{est:F-c-alpha} into \eqref{L2-Fsvn-tri}, we obtain that 
\begin{align}\label{H-al-Fsvn}
\|F^n[s; v^n]\|_{H^{a(\gamma)}} 
&\lesssim \tau^\gamma +\|e^n\|_{L^2} + \|e^n\|_{L^2}^2 
\quad\mbox{for}\,\,\, s\in[t_n,t_{n+1}] , 
\end{align}
where we have omitted the dependence on $\|v(t_n)\|_{H^\gamma}^2$. 

Substituting \eqref{H-al-Fsvn} into \eqref{vF-vF-F1} yields 
\begin{align}\label{estimate:V-v-F-1-1}
\|\eqref{V-v-F-1-1}\|_{L^2}\lesssim \tau^{1+\gamma} +\tau\big(\|e^n\|_{L^2}+ \|e^n\|_{L^2}^2 \big).
\end{align}

The expression in \eqref{V-v-F-1-2} can be estimated by 
\begin{align}\label{estimate:V-v-F-1-2'}
\|\eqref{V-v-F-1-2}\|_{L^2}
&\lesssim \tau \|e^n\|_{L^2} \sup_{s\in[t_n,t_{n+1}]} \|F^n[s;v^n]\|_{H^1} . 
\end{align}
From expression \eqref{def-Fn} we see that 
\begin{align}\label{H1-Fsvn}
 \|F^n[s;v^n]\|_{H^1}
&\lesssim \|v^n\|_{L^2}^2
= \|v(t_n)-e^n\|_{L^2}^2 
\lesssim 1+\|e^n\|_{L^2}^2 .
\end{align}
Inserting \eqref{H1-Fsvn} into \eqref{estimate:V-v-F-1-2'}, gives that 
\begin{align}\label{estimate:V-v-F-1-2}
\|\eqref{V-v-F-1-2}\|_{L^2}
&\lesssim \tau \big(\|e^n\|_{L^2}+\|e^n\|_{L^2}^3\big).
\end{align}

Then, substituting  \eqref{estimate:V-v-F-1-1}  and  \eqref{estimate:V-v-F-1-2} into \eqref{6.3a-12}, we obtain  
\begin{align*}
\|\partial_x^{-1}\eqref{V-v-F-1}\|_{L^2}
\lesssim \tau^{1+\gamma} + \tau\big(\|e^n\|_{L^2}+\|e^n\|_{L^2}^3 \big). 
\end{align*}

The estimation of $\|\partial_x^{-1}\eqref{V-v-F-2}\|_{L^2} $ and $\|\partial_x^{-1}\eqref{V-v-F-3}\|_{L^2} $ are easier than $\|\partial_x^{-1}\eqref{V-v-F-1}\|_{L^2} $. In fact, employing \eqref{H-al-Fsvn}, we have that
\begin{align}\label{L2-Fsvn}
\|F^n[s; v^n]\|_{L^2} \le \|F^n[s; v^n]\|_{H^{a(\gamma)}} 
\lesssim \tau^\gamma +\|e^n\|_{L^2} + \|e^n\|_{L^2}^2 
\quad\mbox{for}\,\,\, s\in[t_n,t_{n+1}],
\end{align}
Hence, by applying $\partial_x^{-1}$ to \eqref{V-v-F-2} and considering the $L^2$ norm of the result, and then by  \eqref{L2-Fsvn} and \eqref{H1-Fsvn},  we have 
\begin{align*}
\|\partial_x^{-1}\eqref{V-v-F-2}\|_{L^2} 
&\lesssim 
\int_{t_n}^t \|F^n[s;v^n] \|_{L^2} \|F^n[s;v^n] \|_{H^1} \d s \\
&\lesssim 
 \tau^{1+\gamma} +\tau\big(\|e^n\|_{L^2}+ \|e^n\|_{L^2}^4\big).
\end{align*} 
%
%

Since $\partial_x^{-1}\eqref{V-v-F-3}$ consists of $R_2^n[v^n]$ and a term similar as $\partial_x^{-1}\eqref{V-v-F-2}$, it follows that we can directly use the result in Corollary \ref{cor:est-R_n_2} and the above estimate for $\|\partial_x^{-1}\eqref{V-v-F-2}\|_{L^2} $. Then we obtain the following result: 
\begin{align*}
\|\partial_x^{-1}\eqref{V-v-F-3}\|_{L^2}
&\lesssim 
\tau^{1+\gamma}\ln(1/\tau)+ \tau\big(\|e^n\|_{L^2}+|\ln(1/\tau)|^2\|e^n\|_{L^2}^3\big)
+ \tau^{1+\gamma} +\tau\big(\|e^n\|_{L^2}+ \|e^n\|_{L^2}^4\big) \\
&\lesssim 
\tau^{1+\gamma}\ln(1/\tau)+ \tau\big(\|e^n\|_{L^2}+|\ln(1/\tau)|^3\|e^n\|_{L^2}^4\big) . 
\end{align*}
where the last inequality is obtained by considering the two cases $|\ln(1/\tau)|\|e^n\|_{L^2}\le 1$ and $|\ln(1/\tau)|\|e^n\|_{L^2}\ge 1$ separately. 
This proves the first result of Lemma \ref{lem:V-increm-est}. \medskip

\noindent{\it Proof of (2)}: 
By applying $\partial_x^{-1}\partial_t$ to the expression in \eqref{eq:6.3}, we have 
\begin{align}
\partial_x^{-1}\partial_t\big(\mathscr V(t)- v^n-F^n[t;v^n]\big) 
=&\,
 \fe^{t\partial_x^3}\P\left(e^{-t\partial_x^3}v^n\cdot e^{-t\partial_x^3}F^n[t;v^n]\Big)\right)\notag\\
&\,+\frac12 \fe^{t\partial_x^3}\P\left(e^{-t\partial_x^3}F^n[t;v^n]\Big)\right)^2  \label{V-vn-F-t}\\
&\,-\frac1{\tau}\left[\frac12 \int_{t_n}^{t_{n+1}} \fe^{s\partial_x^3}\P\left(\fe^{-s\partial_x^3}F^n[s;v^n]\right)^2\,\d s+\partial_x^{-1}R^n_2[v^n]\right] , \notag
\end{align} 
which can be treated in the same way as \eqref{6.3a-12}. This proves the second result of Lemma \ref{lem:V-increm-est}. \medskip

\noindent{\it Proof of (3)}: 
By using the integration-by-parts formula in Lemma \ref{lem:1-form} and the expression of $F[s;v^n]$ in \eqref{def-Fn-1}, we find that 
\begin{subequations}\label{6.3a-2-12}
\begin{align}
\eqref{V-v-F-1}=&\, \fe^{t\partial_x^3}\P\Big(e^{-t\partial_x^3}\partial_x^{-1}v^n\cdot e^{-t\partial_x^3}\partial_x^{-1}F^n[t; v^n]\Big) \label{6.3a-2-123-1}\\
&\,- \int_{t_n}^t \fe^{s\partial_x^3}\P\Big(\P\big[\big(\fe^{-s\partial_x^3}v^n\big)^2\big]\, \fe^{-s\partial_x^3}\partial_x^{-1}v^n\Big) .\label{6.3a-2-123-2}
\end{align}
\end{subequations}
The first term on the right-hand side of \eqref{6.3a-2-12} can be estimated by using the H\"older and Sobolev inequalities as follows: 
\begin{align*}
\|\eqref{6.3a-2-123-1}\|_{L^2}
&\lesssim \|e^{-t\partial_x^3}\partial_x^{-1}v^n\|_{L^2} \|e^{-t\partial_x^3}\partial_x^{-1}F^n[t; v^n]\|_{L^\infty} \\
&\lesssim \|v^n\|_{L^2} \|F^n[t; v^n]\|_{L^2}.
\end{align*}
Then, by substituting \eqref{L2-Fsvn} into the inequality above, we further obtain that 
\begin{align*}
\|\eqref{6.3a-2-123-1}\|_{L^2}
&\lesssim (\|v(t_n)\|_{L^2} + \|e^n\|_{L^2})
\big( \tau^\gamma +\|e^n\|_{L^2} + \|e^n\|_{L^2}^2  \big) \\ 
&\lesssim \tau^\gamma 
+ \|e^n\|_{L^2}+\|e^n\|_{L^2}^2 
+\|e^n\|_{L^2}^3 \\
&\lesssim \tau^\gamma 
+ \|e^n\|_{L^2}+\|e^n\|_{L^2}^3.
\end{align*}

We rewrite \eqref{6.3a-2-123-2} as 
\begin{align*}
\eqref{6.3a-2-123-2}=
&- \int_{t_n}^t \fe^{s\partial_x^3}\P\Big(\P\big[\big(\fe^{-s\partial_x^3}v(t_n)\big)^2\big]\cdot \fe^{-s\partial_x^3}\partial_x^{-1}v(t_n)\Big)\,\d s \\ 
&+\int_{t_n}^t \fe^{s\partial_x^3}\P\Big(\P\big[\big(\fe^{-s\partial_x^3}v(t_n)\big)^2\big]\cdot \fe^{-s\partial_x^3}\partial_x^{-1}v(t_n)-\P\big[\big(\fe^{-s\partial_x^3}v^n\big)^2\big]\cdot \fe^{-s\partial_x^3}\partial_x^{-1}v^n\Big)\,\d s. 
\end{align*}
and apply Proposition \ref{lem:tri-linear} (2) with $\alpha=\gamma$ and $\alpha=0$ for the first and second term, respectively. This yields the following result:
\begin{align*}
\|\eqref{6.3a-2-123-2}\|_{L^2}
\lesssim \tau^\gamma\big\|v(t_n)\big\|_{H^\gamma}^3+\big(\|e^n\|_{L^2}\|v(t_n)\|_{L^2}^2+\|e^n\|_{L^2}^2\|v(t_n)\|_{L^2}+\|e^n\|_{L^2}^3\big) .
\end{align*}
Substituting the estimates of $\|\eqref{6.3a-2-123-1}\|_{L^2}$ and $\|\eqref{6.3a-2-123-2}\|_{L^2}$ into \eqref{6.3a-2-12}, we obtain 
\begin{align*}
\|\eqref{V-v-F-1}\|_{L^2}
\lesssim \tau^\gamma+\big(\|e^n\|_{L^2}+\|e^n\|_{L^2}^3\big) . 
\end{align*}
The following result can be obtained similarly by using integration by parts as in \eqref{6.3a-2-12}: 
\begin{align*}
\|\eqref{V-v-F-2}\|_{L^2}+\|\eqref{V-v-F-3}\|_{L^2}
\lesssim \tau^\gamma+\big(\|e^n\|_{L^2}+\|e^n\|_{L^2}^4\big). 
\end{align*}
Finally, substituting the above estimates of $\|\eqref{V-v-F-1}\|_{L^2}$, $\|\eqref{V-v-F-2}\|_{L^2}$ and $\|\eqref{V-v-F-3}\|_{L^2}$ into \eqref{eq:6.3} yields the third result of Lemma \ref{lem:V-increm-est}. 
\end{proof}
 
By applying Lemma \ref{lem:V-increm-est} (1) to the expression of $\mathcal R_{11}^*(t)$ in \eqref{def-R11234} and using \eqref{L2-Fsvn}, we obtain 
\begin{align*}
\big\|\mathcal R_{11}^*(t)\big\|_{L^2}
&\lesssim \big\|\partial_x^{-1}\big(\mathscr V(t)-v^n-F^n[t;v^n]\big)\big\|_{L^2}
\big\|v^n+F^n[t;v^n]\big\|_{L^2} \\
&\lesssim \big\|\partial_x^{-1}\big(\mathscr V(t)-v^n-F^n[t;v^n]\big)\big\|_{L^2}
\big\|e^n+F^n[t;v^n] + v(t_n)\big\|_{L^2} \\
&\lesssim 
\big[\tau^{1+\gamma}\ln(1/\tau)+ \tau\big(\|e^n\|_{L^2}+ |\ln(1/\tau)|^3 \|e^n\|_{L^2}^4\big)\big]
(\tau^\gamma+\|e^n\|_{L^2}+\|e^n\|_{L^2}^2+1) \\
&\lesssim 
\tau^{1+\gamma}\ln(1/\tau)+ \tau\big(\|e^n\|_{L^2}+|\ln(1/\tau)|^5 \|e^n\|_{L^2}^6\big) \qquad\mbox{for}\,\,\, t\in(t_n,t_{n+1}]. 
\end{align*}
Similarly, by applying Lemma \ref{lem:V-increm-est} (2) to the expression of $\mathcal R_{12}^*(t)$ in \eqref{def-R11234}, we obtain 
\begin{align*}
\big\|\mathcal R_{12}^*(t)\big\|_{L^2}
\lesssim &\, \tau \big\|\partial_x^{-1}\partial_t\big(\mathscr V(t)-v^n-F^n[t;v^n]\big)\big\|_{L^\infty_tL^2_x}
\big\|v^n+F^n[t;v^n]\big\|_{L^\infty_tL^2_x}\\
\lesssim &\,
\tau^{1+\gamma}\ln(1/\tau)+ \tau\big(\|e^n\|_{L^2}+ |\ln(1/\tau)|^5 \|e^n\|_{L^2}^6\big) \qquad\mbox{for}\,\,\, t\in(t_n,t_{n+1}]. 
\end{align*}

We substitute expression \eqref{def-Fn-1} into the expression of $\mathcal R_{13}^*(t)$ in \eqref{def-R11234}, i.e., 
\begin{align*}
\mathcal R_{13}^*(t) =
   &\, \frac13\int_{t_n}^t \fe^{s\partial_x^3}\P\left(\P\Big(\fe^{-s\partial_x^3}v^n\Big)^2\cdot \fe^{-s\partial_x^3}\partial_x^{-1}\big(\mathscr V(s)-v^n-F^n[s;v^n]\big)\right)\,\d s . 
\end{align*}
Then we apply Proposition \ref{lem:tri-linear} (1) to the expression above. This yields the following result: 
\begin{align*}
\big\|\mathcal R_{13}^*(t) \big\|_{L^2}
\lesssim &\, \tau \|v^n\|_{L^2}^2 \big\|\mathscr V(s)-v^n-F^n[s;v^n]\big\|_{L^\infty_tL^2_x}\\
 &\, + \tau \|v^n\|_{L^2}^2 \big\|\partial_t\big(\mathscr V(s)-v^n-F^n[s;v^n]\big)\big\|_{L^\infty_tH^{-\frac{23}{14}}_x}\\
&\, +\big\|v^n\big\|_{L^2}^2\big\|\mathscr V(s)-v^n-F^n[s;v^n]\big\|_{L^\infty_tH^{-\frac{23}{14}}_x}\\
\lesssim &\,
\big[ \tau^{1+\gamma}\ln(1/\tau)+ \tau \big(\|e^n\|_{L^2}+ |\ln(1/\tau)|^3 \|e^n\|_{L^2}^4\big) \big] \|v^n\|_{L^2}^2 ,\end{align*} 
where the last inequality follows from Lemma \ref{lem:V-increm-est}. 
Since $\|v^n\|_{L^2}^2 \lesssim \|e^n\|_{L^2}^2 +\|v(t_n)\|_{L^2}^2 $, it follows that 
\begin{align*}
\big\|\mathcal R_{13}^*(t) \big\|_{L^2} 
&\lesssim \tau^{1+\gamma}\ln(1/\tau)+ \tau\big(\|e^n\|_{L^2}+|\ln(1/\tau)|^3 \|e^n\|_{L^2}^4+|\ln(1/\tau)|^3 \|e^n\|_{L^2}^6\big) \\
&\lesssim \tau^{1+\gamma}\ln(1/\tau)+ \tau\big(\|e^n\|_{L^2}+|\ln(1/\tau)|^5 \|e^n\|_{L^2}^6\big) .
\end{align*}

The expression of $\mathcal R_{14}^*(t)$ can be rewritten as follows, by using the integration-by-parts formula in Lemma \ref{lem:1-form}, i.e., 
\begin{align*}
\mathcal R_{14}^*(t) 
=&\,
-\frac16 \fe^{t\partial_x^3}\P\left(e^{-t\partial_x^3}\partial_x^{-1}(\mathscr V(t)-v^n-F^n[t;v^n]\big)\right)^2\\
&\,+\frac16 \int_{t_n}^t \fe^{s\partial_x^3}\P\left(\fe^{-s\partial_x^3}\partial_x^{-1}\big(\mathscr V(s)-v^n-F^n[s;v^n]\big)\cdot \fe^{-s\partial_x^3}\partial_x^{-1}\partial_s\big(\mathscr V(s)-v^n-F^n[s;v^n]\big)\right)\,\d s.
\end{align*}
Then, by the H\"older and Sobolev inequalities, we have that 
\begin{align*}
&\hspace{-15pt} \sup_{t\in(t_n,t_{n+1}]} \big\|\mathcal R_{14}^*(t) \big\|_{L^2} \\
\lesssim &\,\big\|\mathscr V(t)-v^n-F^n[t;v^n]\big\|_{L^\infty(t_n,t_{n+1};L^2)}\big\|\partial_x^{-1}\big(\mathscr V(t)-v^n-F^n[t;v^n]\big)\big\|_{L^\infty(t_n,t_{n+1};L^2)} \\
&\, +\tau  \big\|\mathscr V(t)-v^n-F^n[t;v^n]\big\|_{L^\infty(t_n,t_{n+1};L^2)}\big\|\partial_x^{-1}\partial_t\big(\mathscr V(t)-v^n-F^n[t;v^n]\big)\big\|_{L^\infty(t_n,t_{n+1};L^2)} \\ 
\lesssim &\, 
\big( \tau^{\gamma} +  \|e^n\|_{L^2}+ \|e^n\|_{L^2}^4 \big) 
\big[ \tau^{1+\gamma}\ln(1/\tau)+ \tau \big(\|e^n\|_{L^2}+ |\ln(1/\tau)|^3 \|e^n\|_{L^2}^4\big) \big] \\
\lesssim &\, 
\tau^{1+\gamma} + \tau\big(\|e^n\|_{L^2}+|\ln(1/\tau)|^7 \|e^n\|_{L^2}^8\big) . 
\end{align*}

Combining with the estimates of $\big\|\mathcal R_{1j}^*(t) \big\|_{L^2} $, $j=1,2,3,4$, 
\begin{align}\label{prop:est-R_n_1}
\big\|\mathcal R^*_1(t)\big\|_{L^2}\le  \tau^{1+\gamma}\ln(1/\tau)+ \tau\big(\|e^n\|_{L^2}+|\ln(1/\tau)|^7 \|e^n\|_{L^2}^8\big) .
\end{align}

\subsection{Estimation of  $\mathcal R^*_2(t)$, $\mathcal R^*_3(t)$ and $\mathcal R^*_4(t)$}

For the remainder $\mathcal R_2^*(t)$ defined in \eqref{def-R(t)}, i.e., 
$$
\mathcal R_2^*(t)=-\frac{t-t_n}{\tau}\Big[\frac12 \int_{t_n}^{t_{n+1}} \fe^{s\partial_x^3}\partial_x\Big(\fe^{-s\partial_x^3}F^n[s;v^n]\Big)^2\,\d s+R^n_2[v^n]\Big] , 
$$
we rewrite $ \int_{t_n}^{t_{n+1}} \fe^{s\partial_x^3}\partial_x\big(\fe^{-s\partial_x^3}F^n[s;v^n]\big)^2\,\d s$ as   
\begin{align*}
&\hspace{-10pt} \int_{t_n}^{t_{n+1}} \fe^{s\partial_x^3}\partial_x\left(\fe^{-s\partial_x^3}F^n[s;v^n]\right)^2\,\d s \\ 
=&\, \int_{t_n}^{t_{n+1}} \fe^{s\partial_x^3}\partial_x\left(\fe^{-s\partial_x^3}F^n[s;v(t_n)]\right)^2\,\d s\\
&\, +\int_{t_n}^{t_{n+1}} \fe^{s\partial_x^3}\partial_x\left(\fe^{-s\partial_x^3}\big(F^n[s;v^n]-F^n[s;v(t_n)]\big)\cdot \fe^{-s\partial_x^3}\big(F^n[s;v^n]+F^n[s;v(t_n)]\big)\right)\,\d s.
\end{align*}
and use inequality \eqref{lem:ga-aga}. Then we obtain 
\begin{align*}
\big\|\mathcal R^*_2(t)\big\|_{L^2}
\lesssim&\,\, \tau \big\|\partial_xF^n[s;v(t_n)]\big\|_{L^\infty(t_n,t_{n+1};H^\gamma)}\big\|F^n[s;v(t_n)]\big\|_{L^\infty(t_n,t_{n+1};H^{a(\gamma)})}\\
&\,+\tau \big\|F^n[s;v^n]-F^n[s;v(t_n)]\big\|_{L^\infty(t_n,t_{n+1};H^1)}\big\|F^n[s;v^n]+F^n[s;v(t_n)]\big\|_{L^\infty(t_n,t_{n+1};H^1)}\\
&\,+ \big\| R^n_2[v^n] \big\|_{L^2}.
\end{align*}
By using Lemma \ref{lem:est-Fn} with $\beta=1+\gamma$ and $\beta=a(\gamma)$, with 
$\frac{1+\gamma-a(\gamma)}{1+\gamma-\beta_0(\gamma)} \ge \gamma$ for $\gamma\in(0,1]$ for the expression of $a(\gamma)$ in \eqref{def:a-gamma}, 
as well as inequalities \eqref{cor:est-R_n_2} and \eqref{est:F-c-alpha}, we obtain 
\begin{align}\label{prop:est-R_n_2} 
\big\|\mathcal R^*_2(t)\big\|_{L^2} 
\lesssim \tau^{1+\gamma}\ln(1/\tau)+\tau\big(\|e^n\|_{L^2}+|\ln(1/\tau)|^3\|e^n\|_{L^2}^4\big).
\end{align}

According to the definitions in \eqref{def-R(t)}, $\mathcal R^*_3(t)$ and $\mathcal R^*_4(t)$ can be expressed in terms of $ \mathcal R_1^*(t_{j+1})$ and $ \mathcal R_2^*(t_{j+1})$ as follows: 
$$
\mathcal R^*_3(t)=\sum\limits_{j=0}^{n-1} \mathcal R_1^*(t_{j+1})
\quad\mbox{and}\quad  
\mathcal R^*_4(t)=\sum\limits_{j=0}^{n-1} \mathcal R_2^*(t_{j+1})
\quad\mbox{for}\,\,\, t\in(t_n,t_{n+1}].
$$
By substituting estimates \eqref{prop:est-R_n_1} and \eqref{prop:est-R_n_2} into relations above, we obtain  the following result:
\begin{align}\label{prop:est-R_n_34}
\big\|\mathcal R^*_3(t)\big\|_{L^2}+\big\|\mathcal R^*_4(t)\big\|_{L^2}
\lesssim 
t_{n+1}\tau^\gamma\ln(1/\tau)
+t_{n+1} \max_{0\le j\le n} \big(\|e^j\|_{L^2}+|\ln(1/\tau)|^7\|e^j\|_{L^2}^8\big). 
\end{align}

Then, by combining the estimates in \eqref{prop:est-R_n_1} and \eqref{prop:est-R_n_2} and \eqref{prop:est-R_n_34}, we obtain the result of Proposition \ref{prop:est-R}.
\end{proof}

\section{Stability estimates using low-high frequency decompositions} \label{sec:regularity}

The main result of this section is the following proposition, which concerns the estimation of the function $\mathcal F(t)$ defined in \eqref{def:math-F}. 

\begin{proposition}\label{prop:math-F}
Under the assumptions of Theorem \ref{main:thm1}, there exists a positive constant $C$, which may depend on $\|v\|_{L^\infty(0,T;H^\gamma)}$ but is independent of $\tau$, such that 
\begin{align*}
\big\|\mathcal F(t)\big\|_{L^2}
\le 
Ct\tau^\gamma 
&\, + CN^2t \|e\|_{L^\infty(0,t;L^2)}\big(1+\|e\|_{L^\infty(0,t;L^2)}\big) \\
&\, + C\big(t+N^{-\frac{23}{14}}\big)\|e\|_{L^\infty(0,t;L^2)}\big(1+|\ln(1/\tau)|^{96}\|e\|_{L^\infty(0,t;L^2)}^{96}\big) 
\quad\forall\, N\ge 1. 
\end{align*}
\end{proposition}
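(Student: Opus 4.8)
The starting point is to split the factor $v-\tfrac12 e$ in \eqref{def:math-F} and write $\mathcal F=\mathcal F_1-\tfrac12\mathcal F_2$, where
\[
\mathcal F_1(t)=\int_0^t \fe^{s\partial_x^3}\partial_x\big(\fe^{-s\partial_x^3}e(s)\cdot \fe^{-s\partial_x^3}v(s)\big)\,\d s,
\qquad
\mathcal F_2(t)=\int_0^t \fe^{s\partial_x^3}\partial_x\big(\fe^{-s\partial_x^3}e(s)\big)^2\,\d s .
\]
Here $\mathcal F_1$ is bilinear in the error $e$ (only in $L^2$) and the exact twisted solution $v$ (with $\|v\|_{L^\infty(0,T;H^\gamma)}\lesssim 1$), while $\mathcal F_2$ is quadratic in $e$; this split already mirrors the linear-plus-higher-order dependence on $\|e\|_{L^\infty(0,t;L^2)}$ in the statement. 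Since $\P_0 e=\P_0 v=0$, all the oscillatory-integral tools of Sections \ref{sec:proof}--\ref{section:tools} are available.

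The main difficulty is the derivative $\partial_x$ acting on a product one factor of which, $e$, carries no spatial regularity. I would separate the two frequency regimes with the Littlewood--Paley projections of Section \ref{section:projection} at an arbitrary level $N\ge1$. In the low-frequency regime the derivative restricted to output frequencies $\lesssim N$ costs only a factor $N$, so a direct estimate --- using \eqref{lem:ga-aga}, Bernstein's inequality \eqref{Bernstein}, and a secondary frequency splitting of the inputs to place one factor in $L^\infty$ --- bounds the contribution by $\lesssim N^2 t\,\|e\|_{L^\infty(0,t;L^2)}\big(1+\|e\|_{L^\infty(0,t;L^2)}\big)$, the quadratic term coming from $\mathcal F_2$.

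For the high-frequency regime I would recover the KdV smoothing through the integration-by-parts identity of Lemma \ref{lem:1-form}, equivalently integration by parts in $s$ against the oscillation $\fe^{-is\phi'}$ with the bilinear phase $\phi'=k^3-k_1^3-k_2^3=3kk_1k_2$: the factor $k$ produced by $\partial_x$ is absorbed by $\phi'$, trading $\partial_x$ for $\partial_x^{-1}$. This generates boundary terms at $s=0,t$ (which are benign and feed the $t$-factor), together with bulk terms carrying $\partial_s e$ and $\partial_s v$. Substituting the KdV evolution \eqref{est:v-nt} for $\partial_s v$ and the continuous numerical formulation \eqref{def:v-s-num-2} for $\partial_s\mathscr V=\partial_s v-\partial_s e$ converts these bulk terms into trilinear KdV operators of the form \eqref{C-KdV}; Proposition \ref{lem:tri-linear}(1) then controls them in $H^{\frac{23}{14}}$, and Bernstein's inequality turns the projection $\P_{>N}$ into the gain $N^{-\frac{23}{14}}$. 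The residual $\tau^\gamma$-consistency is produced when the time derivative $\partial_s e$ is estimated through $\partial_s\mathscr V$: replacing $\mathscr V(s)$ by $v^n+F^n[s;v^n]$ on each subinterval and invoking Lemma \ref{lem:V-increm-est} and Corollary \ref{cor:est-R_n_2} yields a bound of the form $\tau^\gamma+\|e\|+\cdots$, whose standalone $\tau^\gamma$, multiplied by the $O(1)$ legs $\|v\|_{H^\gamma}^2$, gives the term $Ct\tau^\gamma$; this accounts for the overall bound $(t+N^{-\frac{23}{14}})\|e\|_{L^\infty(0,t;L^2)}(1+\cdots)$.

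The step I expect to be the main obstacle is the rigorous control of the time-derivative terms. Because $\mathscr V$ is only piecewise defined and $\partial_s\mathscr V$ involves the numerical corrections $F^n$ and $R_2^n$, one must verify that $\partial_s e$ (and the relevant combinations) lie in $L^\infty(s_0,s;H^{-\frac{23}{14}})$ with quantitatively controlled norms --- precisely the hypothesis required by Proposition \ref{lem:tri-linear}. Carrying out this verification forces repeated use of the estimates in Lemma \ref{lem:V-increm-est}, each of which reintroduces higher powers of $\|e\|_{L^\infty(0,t;L^2)}$ and of $\ln(1/\tau)$; tracking this nonlinear feedback while keeping all constants uniform in $\tau$ is what ultimately produces, and must be shown to control, the high powers $|\ln(1/\tau)|^{96}\|e\|_{L^\infty(0,t;L^2)}^{96}$ appearing in the statement.
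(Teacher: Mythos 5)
Your proposal follows essentially the same route as the paper: a low/high frequency split at level $N$ (Bernstein giving the $N^2t$ term), integration by parts via Lemma \ref{lem:1-form} on the high-frequency part, substitution of the KdV evolution for $\partial_s v$ and of the error equation $e=e^0+\mathcal F-\mathcal R$ (hence the continuous formulation of the scheme) for $\partial_s e$, and then Proposition \ref{lem:tri-linear}(1) combined with Bernstein to extract the $N^{-\frac{23}{14}}$ gain, with Lemma \ref{lem:V-increm-est} and Corollary \ref{cor:est-R_n_2} feeding the $\tau^\gamma$ consistency and the high powers of $\|e\|$ and $\ln(1/\tau)$. The preliminary split $\mathcal F=\mathcal F_1-\tfrac12\mathcal F_2$ is cosmetic and does not change the argument.
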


In the proof of Proposition \ref{prop:math-F} we need to use the two estimates in the following lemma. 
\begin{lemma}\label{lem:e-R-pt}
Under the assumptions of Theorem \ref{main:thm1}, there exists a positive constant $C$, which may depend on $\|v\|_{L^\infty(0,T;H^\gamma)}$ but is independent of $\tau$, such that 
\begin{itemize}
\item[{\rm(1)}] $\|\partial_t\mathcal R\|_{L^\infty(t_n,t_{n+1};H^{-\frac32-})}
\le C \tau^\gamma\ln(1/\tau)+C\big(\|e^n\|_{L^2}+|\ln(1/\tau)|^7 \|e^n\|_{L^2}^8 \big)$. 
\item[{\rm(2)}] $\|\partial_te\|_{L^\infty(t_n,t_{n+1};H^{-\frac32-})}
\le C \tau^\gamma\ln(1/\tau)+C\big(\|e^n\|_{L^2}+|\ln(1/\tau)|^7 \|e^n\|_{L^2}^8 \big)$.
\end{itemize}
\end{lemma}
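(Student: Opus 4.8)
The plan is to reduce both estimates to a single product inequality together with the observation that the Airy propagators $\fe^{\pm t\partial_x^3}$ are isometries on every $H^s$ and that $\partial_x$ maps $H^{-\frac12-}$ into $H^{-\frac32-}$. The product inequality I would use is $\|fg\|_{H^{-\frac12-}}\lesssim\|f\|_{L^2}\|g\|_{L^2}$, proved by duality: for $\|\varphi\|_{H^{\frac12+}}\le 1$ one has $|\langle fg,\varphi\rangle|\le\|f\|_{L^2}\|g\varphi\|_{L^2}$, and then $\|g\varphi\|_{L^2}\lesssim\|g\|_{L^2}\|\varphi\|_{H^{\frac12+}}$ by \eqref{lem:ga-aga} with $\gamma=0$ (recall $a(0)=\frac12+$ in \eqref{def:a-gamma}), while $H^{-\frac12-}=(H^{\frac12+})^*$. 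Since $\fe^{-t\partial_x^3}$ preserves the $L^2$ norm, the same bound holds for products of twisted functions, $\|\fe^{-t\partial_x^3}f\cdot\fe^{-t\partial_x^3}g\|_{H^{-\frac12-}}\lesssim\|f\|_{L^2}\|g\|_{L^2}$, which is the only harmonic-analysis input needed.

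For (1), I would first note that in \eqref{def-R(t)} the terms $\mathcal R^*_3(t)$ and $\mathcal R^*_4(t)$ are constant in $t$ on $(t_n,t_{n+1})$, so $\partial_t\mathcal R=\partial_t\mathcal R^*_1+\partial_t\mathcal R^*_2$ there. Differentiating the \emph{integral} defining $\mathcal R^*_1$ in \eqref{def-R(t)} (rather than its integrated-by-parts form, which would produce a spurious $\partial_x^3$) removes the time integral and leaves the integrand at $s=t$; applying the $\fe^{t\partial_x^3}$-isometry, $\partial_x\colon H^{-\frac12-}\to H^{-\frac32-}$, and the product inequality above gives $\|\partial_t\mathcal R^*_1(t)\|_{H^{-\frac32-}}\lesssim\|w(t)\|_{L^2}\big(\|w(t)\|_{L^2}+\|v^n+F^n[t;v^n]\|_{L^2}\big)$ with $w=\mathscr V-v^n-F^n[\cdot;v^n]$. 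Then I insert Lemma \ref{lem:V-increm-est}(3) for $\|w\|_{L^2}$, the bound $\|v^n\|_{L^2}\lesssim 1+\|e^n\|_{L^2}$, and \eqref{L2-Fsvn} for $\|F^n[t;v^n]\|_{L^2}$. For $\mathcal R^*_2$ I would use that its bracket in \eqref{def-R(t)} is $t$-independent, so $\partial_t\mathcal R^*_2=-\tfrac1\tau[\cdots]$; the bracket equals $\mathcal R^*_2(t_{n+1})$ up to sign, whose $L^2$ norm is controlled by \eqref{prop:est-R_n_2}, and the prefactor $\tau^{-1}$ turns the $\tau^{1+\gamma}\ln(1/\tau)$ there into $\tau^{\gamma}\ln(1/\tau)$. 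Using $L^2\hookrightarrow H^{-\frac32-}$ and collecting both contributions yields (1), the logarithmic factors originating from $\mathcal R^*_2$ and being absorbed into the generous power $|\ln(1/\tau)|^7$.

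For (2), I would write $\partial_t e=\partial_t\mathcal F-\partial_t\mathcal R$ from \eqref{def:es}, so that part (1) handles $\partial_t\mathcal R$. Differentiating \eqref{def:math-F} gives $\partial_t\mathcal F(t)=\fe^{t\partial_x^3}\partial_x\big(\fe^{-t\partial_x^3}e(t)\cdot\fe^{-t\partial_x^3}(v(t)-\tfrac12 e(t))\big)$, and the same three ingredients bound it by $\|e(t)\|_{L^2}\big(\|v(t)\|_{L^2}+\|e(t)\|_{L^2}\big)\lesssim\|e(t)\|_{L^2}(1+\|e(t)\|_{L^2})$, using $\|v(t)\|_{L^2}\lesssim1$. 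It remains to bound $\|e(t)\|_{L^2}$ for $t\in[t_n,t_{n+1}]$ by the right-hand side. I would decompose $e(t)=e^n+(v(t)-v(t_n))-(\mathscr V(t)-v^n)$: the numerical increment is controlled by $\|\mathscr V(t)-v^n\|_{L^2}\le\|F^n[t;v^n]\|_{L^2}+\|\mathscr V(t)-v^n-F^n[t;v^n]\|_{L^2}$ through \eqref{L2-Fsvn} and Lemma \ref{lem:V-increm-est}(3), while the exact increment $\|v(t)-v(t_n)\|_{L^2}\lesssim\tau^\gamma$ is obtained from the integration-by-parts representation (Lemma \ref{lem:1-form}). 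This produces $\|e(t)\|_{L^2}\lesssim\tau^\gamma+\|e^n\|_{L^2}+\|e^n\|_{L^2}^4$; substituting into the bound for $\partial_t\mathcal F$ and adding (1) gives (2).

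The main obstacle I anticipate is twofold. First, the product estimate is genuinely endpoint: the product of two $L^2$ functions barely fails to lie in $H^{-\frac12}$, and one must spend the arbitrarily small $\epsilon$ in $H^{-\frac12-}$ (hence in $H^{-\frac32-}$) to close it, which is exactly why the negative index $-\frac32-$ is the natural target space here. Second, and more delicate, is the uniform control of $\|e(t)\|_{L^2}$ across the subinterval purely in terms of the left-endpoint value $\|e^n\|_{L^2}$: the exact increment $v(t)-v(t_n)$ is \emph{not} directly estimable in $L^2$ because of the derivative in the nonlinearity, so one must route it through the integration-by-parts identity, mirroring the low-regularity mechanism already used for the numerical increment in Lemma \ref{lem:V-increm-est}. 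Once these two points are settled, keeping track of the resulting polynomial powers of $\|e^n\|_{L^2}$ and logarithmic factors, and verifying they are dominated by $|\ln(1/\tau)|^7\|e^n\|_{L^2}^8$, is routine.
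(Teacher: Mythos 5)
Your proof of part (1) is essentially the paper's: the paper also observes that $\mathcal R^*_3,\mathcal R^*_4$ are constant on $(t_n,t_{n+1}]$, differentiates the integral form of $\mathcal R^*_1$ and the linear-in-$t$ prefactor of $\mathcal R^*_2$ to arrive at the expression \eqref{ps-math-R}, and then uses exactly your product estimate in the guise of $L^1\hookrightarrow H^{-\frac12-}$ composed with H\"older (your duality argument against $H^{\frac12+}$ is the same inequality), followed by Lemma \ref{lem:V-increm-est}, \eqref{v+F} and \eqref{prop:est-R_n_2}. The first half of part (2) — differentiating the error equation \eqref{en-itera}, bounding the quadratic term by $\|e(t)\|_{L^2}\big(\|v(t)\|_{L^2}+\|e(t)\|_{L^2}\big)$ with the same product estimate, and invoking part (1) — also coincides with the paper.

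The one place you diverge is in converting $\|e(t)\|_{L^2}$ for $t$ strictly inside $(t_n,t_{n+1}]$ into a bound by $\|e^n\|_{L^2}$. The paper does not do this: its proof of (2) simply estimates $\|e(t)\|_{L^2}\le\|e\|_{L^\infty(0,t;L^2)}$ and records the conclusion with $\|e\|_{L^\infty(0,t;L^2)}$ on the right-hand side, which is the form actually used downstream in \eqref{est:e-v-X0}. Your decomposition $e(t)=e^n+(v(t)-v(t_n))-(\mathscr V(t)-v^n)$ is a reasonable attempt to match the letter of the statement, and the numerical increment is handled correctly via \eqref{L2-Fsvn} and Lemma \ref{lem:V-increm-est}(3). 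However, the claim $\|v(t)-v(t_n)\|_{L^2}\lesssim\tau^\gamma$ does not follow merely from ``the integration-by-parts representation'': writing $v(t)-v(t_n)=F^n[t;v(t_n)]+r_n(t)$ as in \eqref{v-increm}, Lemma \ref{lem:est-Fn} controls the first term, but the remainder $r_n(t)$ in \eqref{def:rn} re-involves $v(s)-v(t_n)$ under a derivative, so closing the estimate requires a separate bootstrap/consistency argument for the exact flow (and a crude interpolation between the $H^{-3}$-type bound from the equation and the $H^\gamma$ bound gives an exponent strictly below $\gamma$ for small $\gamma$). This is precisely the kind of estimate the paper's continuous reformulation is designed to avoid, and it is nowhere established there. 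So either supply that argument, or do as the paper does and state (2) with $\|e\|_{L^\infty(0,t;L^2)}$ in place of $\|e^n\|_{L^2}$, which suffices for all subsequent uses.
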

\begin{proof}
By differentiating \eqref{def-R(t)} with respect to $t\in(t_n,t_{n+1}]$, we can find the following expression: 
\begin{align}
\partial_t \mathcal R(t)
=&\, -\frac12 \fe^{t\partial_x^3}\partial_x\left(e^{-t\partial_x^3}\big(\mathscr V(t)-v^n-F^n[t;v^n]\big)\cdot e^{-t\partial_x^3}\big(\mathscr V(t)+v^n+F^n[t;v^n]\big)\right)\notag\\
&\, -\frac1\tau \left[\frac12 \int_{t_n}^{t_{n+1}} \fe^{t\partial_x^3}\partial_x\left(e^{-t\partial_x^3}F^n[t;v^n]\right)^2\,\d t+R^n_2[v^n]\right]\notag\\
=&\, -\frac12 \fe^{t\partial_x^3}\partial_x\left(e^{-t\partial_x^3}\big(\mathscr V(t)-v^n-F^n[t;v^n]\big)\cdot e^{-t\partial_x^3}\big(\mathscr V(t)-v^n-F^n[t;v^n]\big)\right) \notag\\
&\, - \fe^{t\partial_x^3}\partial_x\left(e^{-t\partial_x^3}\big(\mathscr V(t)-v^n-F^n[t;v^n]\big)\cdot e^{-t\partial_x^3}\big(v^n+F^n[t;v^n]\big)\right) \notag\\
&\, - \tau^{-1} \mathcal R^*_2(t_{n+1}).\label{ps-math-R}
\end{align}
Then, by applying the Sobolev embedding inequality $L^1\hookrightarrow H^{-\frac12-}$ and the H\"older inequality $\|fg\|_{L^1}\le \|f\|_{L^2}\|g\|_{L^2}$, we have 
\begin{align*}
\big\|\partial_t\mathcal R\big\|_{L^\infty(t_n,t_{n+1};H^{-\frac32-})}
\lesssim &\, \big\|\mathscr V(t)-v^n-F^n[t;v^n]\big\|_{L^\infty(t_n,t_{n+1};L^2)}^2 \\ 
&\, + \big\|\mathscr V(t)-v^n-F^n[t;v^n]\big\|_{L^\infty(t_n,t_{n+1};L^2)}  \big\|v^n+F^n[t;v^n]\big\|_{L^\infty(t_n,t_{n+1};L^2)} \\
&\, 
+ \tau^{-1}\big\|\mathcal R^*_2(t_{n+1})\big\|_{L^2}\\
\le &\, \big( \tau^{\gamma} +\|e^n\|_{L^2}+\|e^n\|_{L^2}^4 \big)^2 \\
&\,+ \big(\tau^{\gamma} + \|e^n\|_{L^2}+\|e^n\|_{L^2}^4 \big)(1+\|e^n\|_{L^2}^2) \\
&\, +\tau^{\gamma}\ln(1/\tau)+ \|e^n\|_{L^2}+|\ln(1/\tau)|^3\|e^n\|_{L^2}^4 \\
\le &\, \tau^\gamma\ln(1/\tau) + \|e^n\|_{L^2}+|\ln(1/\tau)|^7\|e^n\|_{L^2}^8 , 
\end{align*}
where the second to last inequality follows from Lemma \ref{lem:V-increm-est} and \eqref{prop:est-R_n_2}, together with the following estimate of $\|v^n+F^n[t;v^n]\|_{L^\infty(t_n,t_{n+1};L^2)}$ by using the expression in \eqref{def-Fn}: 
\begin{align}\label{v+F}
\|v^n+F^n[t;v^n]\|_{L^\infty(t_n,t_{n+1};L^2)}
\lesssim 
\|v^n\|_{L^2}+\|v^n\|_{L^2}^2
\lesssim
1+\|e^n\|_{L^2}+\|e^n\|_{L^2}^2
\lesssim
1+\|e^n\|_{L^2}^2 . 
\end{align}
This proves the first result of Lemma \ref{lem:e-R-pt}. 

By differentiating \eqref{en-itera} with respect to $t\in(t_n,t_{n+1}]$, we can find the following expression: 
\begin{align*}
\partial_t e(t)=\partial_x\Big[\fe^{-t\partial_x^3}e(t)\>\fe^{-t\partial_x^3}\Big(v(t)-\frac{1}{2}e(t)\Big)\Big]+\partial_t \mathcal R(t) . 
\end{align*}
Then, by applying the Sobolev embedding $L^1\hookrightarrow H^{-\frac12-}$ and the H\"older  inequality $\|fg\|_{L^1}\le \|f\|_{L^2}\|g\|_{L^2}$, we have 
\begin{align*}
\big\|\partial_te(t)\big\|_{H^{-\frac32-}} 
\lesssim &\, \|e(t)\|_{L^2}\big(\|v(t)\|_{L^2}+\|e(t)\|_{L^2}\big)
+\big\|\partial_t\mathcal R(t)\big\|_{H^{-\frac32-}}\\
\le &\, \tau^\gamma\ln(1/\tau) +\big(\|e\|_{L^\infty(0,t;L^2)}+|\ln(1/\tau)|^3\|e\|_{L^\infty(0,t;L^2)}^6\big)  .
\end{align*}
This proves the second result of Lemma \ref{lem:e-R-pt}. 
\end{proof}

\begin{proof}[Proof of Proposition \ref{prop:math-F}]
We first consider the low-frequency part of $\mathcal F(t)$, i.e.,  
\begin{align}\label{est:math-F-low}
\big\|\P_{\le N}\mathcal F(t)\big\|_{L^2}
\lesssim &\,
\bigg\|\int_0^t\fe^{s\partial_x^3}
  \P_{\le N}\partial_x\Big[\fe^{-s\partial_x^3}e(s)\>\fe^{-s\partial_x^3}\Big(v(s)-\frac{1}{2}e(s)\Big)\Big]\,\d s \bigg\|_{L^2} \notag\\
  \lesssim &\,
\int_0^t 
  N^2 \Big\|\partial_x^{-1}\Big[\fe^{-s\partial_x^3}e(s)\>\fe^{-s\partial_x^3}\Big(v(s)-\frac{1}{2}e(s)\Big)\Big]\Big\|_{L^2}  \,\d s \notag\\
\lesssim &\, N^2t \|e\|_{L^\infty(0,t;L^2)}\big(\|v\|_{L^\infty(0,t;L^2)}+\|e\|_{L^\infty(0,t;L^2)}\big)\notag\\
\lesssim &\, N^2t\big(\|e\|_{L^\infty(0,t;L^2)}+\|e\|_{L^\infty(0,t;L^2)}^2\big).
\end{align}

We then consider the high-frequency part of $\mathcal F(t)$ by using the integration-by-parts formula in Lemma \ref{lem:1-form}, which implies that 
\begin{subequations}\label{math-F-123}
\begin{align}
\mathcal F(t)=&\, \frac13\fe^{t\partial_x^3}
  \P\Big[\fe^{-t\partial_x^3}\partial_x^{-1}e(t)\>\fe^{-t\partial_x^3}\partial_x^{-1}\Big(v(t)-\frac12e(t)\Big)\Big]\label{math-F-1}\\
   &\, -\frac{1}{3}\int_0^t\fe^{s\partial_x^3} \P\Big[\fe^{-s\partial_x^3}\partial_x^{-1}\partial_s e(s)\>\fe^{-s\partial_x^3}\partial_x^{-1}\Big(v(s)-e(s)\Big)\Big]\d s\label{math-F-2}\\
   &\, -\frac{1}{3}\int_0^t\fe^{s\partial_x^3} \P\Big[\fe^{-s\partial_x^3}\partial_x^{-1}e(s)\>\fe^{-s\partial_x^3}\partial_x^{-1}\partial_sv(s)\Big]\d s.\label{math-F-3}
\end{align}
\end{subequations}
The first term on the right-hand side of \eqref{math-F-123} can be estimated by using the Sobolev and Bernstein inequalities, i.e., 
\begin{align*}
\big\|\P_{\ge N}\eqref{math-F-1}\big\|_{L^2}
&\lesssim N^{-1}\Big\|\fe^{-t\partial_x^3}\partial_x^{-1}e(t)\>\fe^{-t\partial_x^3}\partial_x^{-1}\Big(v(t)-\frac12e(t)\Big)\Big\|_{H^1} \\
&\lesssim N^{-1} \|e\|_{L^2}\big(\|v\|_{L^2}+\|e\|_{L^2}\big) \\ 
&\lesssim N^{-1} \big(\|e\|_{L^2}+\|e\|_{L^2}^2\big).
\end{align*}
In view of the relation $e(t)= e^0 + \mathcal F(t)+ \mathcal R(t)$, as shown in \eqref{def:es}, 
the second term on the right-hand side of \eqref{math-F-123} can be decomposed into the following two parts: 
\begin{subequations}\label{math-F-2-123}
\begin{align}
\eqref{math-F-2}=&\, -\frac{1}{3}\int_0^t\fe^{s\partial_x^3} \P\left(\fe^{-s\partial_x^3}\partial_x^{-1}\partial_s \mathcal F(s)\>\fe^{-s\partial_x^3}\partial_x^{-1}\big(v(s)-e(s)\big)\right)\d s\label{math-F-2-1}\\
   &\,  -\frac{1}{3}\int_0^t\fe^{s\partial_x^3} \P\left(\fe^{-s\partial_x^3}\partial_x^{-1}\partial_s \mathcal R(s)\>\fe^{-s\partial_x^3}\partial_x^{-1}\big(v(s)-e(s)\big)\right)\d s\label{math-F-2-2}.
\end{align}
\end{subequations}
where \eqref{math-F-2-1} can be furthermore expressed as follows by using relation \eqref{def:math-F}: 
\begin{align*}
\eqref{math-F-2-1}=& -\frac{1}{3}\int_0^t\fe^{s\partial_x^3} \P\Big(\P\Big[\fe^{-s\partial_x^3}e(s)\>\fe^{-s\partial_x^3}\Big(v(s)-\frac{1}{2}e(s)\Big)\Big]\>\fe^{-s\partial_x^3}\partial_x^{-1}\big(v(s)-e(s)\big)\Big)\d s.
\end{align*}
By considering $\P_{\ge N} \eqref{math-F-2-1}$, the frequency of at least one of the three terms $e(s)$, $v(s)-\frac{1}{2}e(s)$ and $v(s)-e(s)$, should be greater than or equal to $N/3$. Then, by applying Proposition \ref{lem:tri-linear} (1) to the high-frequency part of the expression above and using Bernstein's inequality for high-frequency functions, i.e., the second inequality of \eqref{Bernstein} with $s_0=-\frac{23}{14}$ and $s=0$, we obtain  
\begin{align*}
\big\|\P_{\ge N} \eqref{math-F-2-1}\big\|_{L^2}
\lesssim &\, t \|e\|_{X^0([0,t])}\|v-\mbox{$\frac{1}{2}$}e\|_{X^0([0,t])}\|v-e\|_{X^0([0,t])}\\
&\, +N^{-\frac{23}{14}}\|e\|_{L^\infty(0,t;L^2)}\|v-\mbox{$\frac{1}{2}$}e\|_{L^\infty(0,t;L^2)}\|v-e\|_{L^\infty(0,t;L^2)}  \\ 
\lesssim &\, t \|e\|_{X^0([0,t])}\big( \|v\|_{X^0([0,t])}^2+\|e\|_{X^0([0,t])}^2 \big)\\
&\, +N^{-\frac{23}{14}} \|e\|_{L^\infty(0,t;L^2)}\big(\|v\|_{L^\infty(0,t;L^2)}^2+\|e\|_{L^\infty(0,t;L^2)}^2\big).
\end{align*}
From \eqref{vt eq} it is straightforward to derive that $\|v\|_{X^0([0,t])}\lesssim 1$, 
and from Lemma \ref{lem:e-R-pt} (2) we know that 
\begin{align}\label{est:e-v-X0}
\|e\|_{X^0([0,t])}\le \tau^\gamma\ln(1/\tau) + 
\big(\|e\|_{L^\infty(0,t;L^2)}+|\ln(1/\tau)|^7 \|e\|_{L^\infty(0,t;L^2)}^8\big) . 
\end{align}
Therefore, we have that 
\begin{align*}
\big\|\P_{\ge N} \eqref{math-F-2-1}\big\|_{L^2}
\lesssim t\tau^\gamma \ln(1/\tau) 
&\,  +
(t+N^{-\frac{23}{14}})\big(\|e\|_{L^\infty(0,t;L^2)}+|\ln(1/\tau)|^{23} \|e\|_{L^\infty(0,t;L^2)}^{24}\big) .
\end{align*}

We substitute expression \eqref{ps-math-R} into \eqref{math-F-2-2}. This yields  
\begin{align}\label{math-F-2-2-R5}
\eqref{math-F-2-2}= &\, \mathcal R_{5}^*(t) + \sum\limits_{j=0}^{n-1}\mathcal R_{5}^*(t_{j+1}) \quad\mbox{for}\,\,\, t\in(t_n,t_{n+1}] , 
\end{align} 
where
\begin{align*}
\mathcal R_{5}^*(t)= &\, \frac{1}{6}\int_{t_n}^t\fe^{t\partial_x^3} \P\Big(\P\Big(\fe^{-s\partial_x^3}\big(\mathscr V(s)-v^n-F^n[s;v^n]\big)\\
&\,\qquad\qquad\qquad\qquad \cdot \fe^{-s\partial_x^3}\big(\mathscr V(s)+v^n+F^n[s;v^n]\big)\Big)\cdot\fe^{-s\partial_x^3}\partial_x^{-1}\big(v(s)-e(s)\big)\Big)\,\d s \\
&\, +\frac13\tau^{-1} \int_{t_n}^t \P\Big(\fe^{-s\partial_x^3}\partial_x^{-1}\mathcal R_{2}^*(t_{n+1}) \cdot\fe^{-s\partial_x^3}\partial_x^{-1}\big(v(s)-e(s)\big)\Big)\,\d s
\quad\mbox{for}\,\,\, t\in(t_n,t_{n+1} ] . 
\end{align*}
which can be estimated by using Proposition \ref{lem:tri-linear} (1), i.e.,  
\begin{align*}
\big\|\mathcal R_{5}^*(t)\big\|_{L^2}\lesssim &\, \tau \|\mathscr V(s)-v^n-F^n[s;v^n]\|_{X^0([t_n, t_{n+1}])}\\
&\quad\cdot \|\mathscr V(t)+v^n+F^n[s;v^n]\|_{X^0([t_n, t_{n+1}])}\big(\|v\|_{X^0([0,t])}+\|e\|_{X^0([0,t])}\big)\\
&\,+\|\mathscr V(t)-v^n-F^n[s;v^n]\|_{L^\infty(t_n, t_{n+1};H^{-\frac{23}{14}})}\\
&\quad\cdot \|\mathscr V(t)+v^n+F^j[s;v^n]\|_{L^\infty(t_n, t_{n+1};L^2)}\big(\|v\|_{L^\infty(0,t;L^2)}+\|e\|_{L^\infty(0, t;L^2)}\big)\\ 
&\,+ \|\mathcal R_2^*(t_{n+1})\|_{L^2}\big(\|v\|_{L^\infty(0,t;L^2)}+\|e\|_{L^\infty(0,t;L^2)}\big).
\end{align*} 
Lemma \ref{lem:V-increm-est} says that 
\begin{align*}
\big\|\mathscr V(s)-v^n-F^n[s;v^n]\big\|_{X^0([t_n, t_{n+1}])}
&\lesssim  \tau^{\gamma}\ln(1/\tau)+ \big(\|e^n\|_{L^2}+|\ln(1/\tau)|^3\|e^n\|_{L^2}^4\big) , \\
\big\|\mathscr V(s)-v^n-F^n[s;v^n]\big\|_{L^\infty(t_n, t_{n+1};H^{-\frac{23}{14}})}
&\lesssim \tau^{1+\gamma}\ln(1/\tau) + \tau \big(\|e^n\|_{L^2}+|\ln(1/\tau)|^3\|e^n\|_{L^2}^4\big)  .
\end{align*} 
The above result and \eqref{v+F}, together with the triangle inequality, imply that  
\begin{align*}
&\big\|\mathscr V(s) +v^n+F^n[s;v^n]\big\|_{X^0([t_n, t_{n+1}])}\\
&\lesssim \big\|\mathscr V(s)-v^n-F^n[s;v^n]\|_{X^0([t_n, t_{n+1}])}+2\big\|v^n+F^n[s;v^n]\big\|_{X^0([t_n, t_{n+1}])}\\
&\lesssim \tau^{\gamma}\ln(1/\tau)+ \|e^n\|_{L^2}+|\ln(1/\tau)|^3 \|e^n\|_{L^2}^4 
+ 1 + \|e^n\|_{L^2}^2 \\
&\lesssim 1 + |\ln(1/\tau)|^4 \|e^n\|_{L^2}^4  . 
\end{align*}
From \eqref{est:e-v-X0} we also know that 
$$
\|v\|_{X^0([0,t])}+\|e\|_{X^0([0,t])}
\lesssim 1 + |\ln(1/\tau)|^8 \|e^n\|_{L^2}^8 . 
$$
These estimates, together with \eqref{est:e-v-X0} and Proposition \ref{prop:est-R_n_2}, give the the following result: 
\begin{align*}
\big\|\mathcal R_{5}^*(t)\big\|_{L^2}
\le \,& \big[\tau^{1+\gamma}\ln(1/\tau) + \tau \|e^n\|_{L^2}\big(1+|\ln(1/\tau)|^3\|e^n\|_{L^2}^3\big) \big] \\
&\,\, \cdot (1 + |\ln(1/\tau)|^{4}\|e\|_{L^\infty(0,t;L^2)}^{4}) (1 + |\ln(1/\tau)|^{8}\|e\|_{L^\infty(0,t;L^2)}^{8}) . 
\end{align*}
Substituting this into \eqref{math-F-2-2-R5} yields that 
\begin{align*}
\big\| \eqref{math-F-2-2} \big\|_{L^2} 
\lesssim 
t\tau^\gamma\ln(1/\tau) + t \|e\|_{L^\infty(0,t;L^2)}
\big(1+|\ln(1/\tau)|^{96}\|e\|_{L^\infty(0,t;L^2)}^{96}\big).
\end{align*}

By substituting the estimates of $\big\|\P_{\ge N} \eqref{math-F-2-1}\big\|_{L^2}$ and $\big\| \eqref{math-F-2-2} \big\|_{L^2} $ into \eqref{math-F-2-123}, we obtain 
\begin{align*}
\big\|\P_{\ge N} \eqref{math-F-2}\big\|_{L^2}\lesssim 
t\tau^\gamma\ln(1/\tau)  +\big(t+N^{-\frac{23}{14}}\big)\|e\|_{L^\infty(0,t;L^2)}
\big(1+|\ln(1/\tau)|^{96}\|e\|_{L^\infty(0,t;L^2)}^{96}\big).
\end{align*}

Finally, by using the expression of $\partial_tv(s)$ in \eqref{vt eq}, the last term on the right-hand side of \eqref{math-F-123} can be rewritten as follows:  
\begin{align*}
 \eqref{math-F-3}= -\frac{1}{6}\int_0^t\fe^{s\partial_x^3} \P\left(\P\left(\fe^{-s\partial_x^3}v(s)\right)^2\>\fe^{-s\partial_x^3}\partial_x^{-1}e(s)\right)\d s.
\end{align*}
Again, by considering $\P_{\ge N}  \eqref{math-F-3}$, the frequency of at least one of the three terms $v(s)$, $v(s)$ and  $e(s)$, should be greater than or equal to $N/3$. Then, by applying Proposition \ref{lem:tri-linear} (1) to the high-frequency part of the expression above together with \eqref{est:e-v-X0}, and using Bernstein's inequality for high-frequency functions, i.e., the second inequality of \eqref{Bernstein} with $s_0=-\frac{23}{14}$ and $s=0$, we obtain  
\begin{align*}
\big\|\P_{\ge N} \eqref{math-F-3}\big\|_{L^2}
\lesssim &\, t \|e\|_{X^0([0,t])}\|v\|_{X^0([0,t])}^2 + N^{-\frac{23}{14}} \|e\|_{L^\infty(0,t;L^2)}\|v\|_{L^\infty(0,t;L^2)}^2\\
\le &\, t\tau^\gamma\ln(1/\tau) + 
(t+N^{-\frac{23}{14}})\|e\|_{L^\infty(0,t;L^2)} \big(1+|\ln(1/\tau)|^7 \|e\|_{L^\infty(0,t;L^2)}^7\big).
\end{align*}

Combining the estimates of $\P_{\ge N}$\eqref{math-F-1}, \eqref{math-F-2} and $\P_{\ge N}$\eqref{math-F-3}, yields a desired estimate of $\big\|\P_{\ge N}\mathcal F(t)\big\|_{L^2}$, which together with the estimate of $\big\|\P_{\le N}\mathcal F(t)\big\|_{L^2}$ in \eqref{est:math-F-low} implies the result of Proposition \ref{prop:math-F}. 
\end{proof}

\section{Error estimates (Proof of Theorem \ref{main:thm1})} \label{sec:globalstability} 

By using the relation $ e(t)= e^0 + \mathcal F(t)- \mathcal R(t) $ in \eqref{def:es}, and the estimates of $\mathcal R(t)$ and $\mathcal F(t)$ in Propositions \ref{prop:est-R} and \ref{prop:math-F}, respectively, we obtain  
\begin{align*}
\|e(t)\|_{L^2}\le 
 \|e^0\|_{L^2}
&\, +C (t+\tau)\tau^\gamma  \ln(1/\tau)
+ CtN^2\|e\|_{L^\infty(0,t;L^2)}\big(1+\|e\|_{L^\infty(0,t;L^2)}\big)\\
&\, + C\big(t+\tau+N^{-\frac{23}{14}}\big)\|e\|_{L^\infty(0,t;L^2)}\big(1+|\ln(1/\tau)|^{96}\|e\|_{L^\infty(0,t;L^2)}^{96}\big) 
\quad\forall\, t\in [0,T]. 
\end{align*}
Choosing $N=(t+\tau)^{-\frac14}$ in the inequality above, we have  
\begin{align}\label{error_inequality}
\|e(t)\|_{L^2}\le \|e^0\|_{L^2}
+ C_1\tau^\gamma \ln(1/\tau) +C_2(t+\tau)^{\frac14} \|e\|_{L^\infty(0,t;L^2)}\big(1+|\ln(1/\tau)|^{96}\|e\|_{L^\infty(0,t;L^2)}^{96}\big)  . 
\end{align}
Since $e^0=0$ and $\|e\|_{L^\infty(0,t;L^2)}$ is a continuous function of $t\in[0,T]$, we may assume that $t_*\in(0,T]$ is the maximal time such that 
$$ \|e\|_{L^\infty(0,t_*;L^2)}\le \tau^{\frac{\gamma}{2}} .$$ 

If $t_*=T$ then we set $\delta=0$. 

If $t_*<T$ then there exists a positive constant $\delta>0$ such that $\|e\|_{L^\infty(0,t_*+\delta;L^2)}\le 2\tau^{\frac{\gamma}{2}}$ according to the continuity of $\|e\|_{L^\infty(0,t;L^2)}$ with respect to  $t\in[0,T]$. 

In either case, we can rewrite \eqref{error_inequality} as follows (regarding $s$ as the initial time): 
\begin{align*}
\|e\|_{L^\infty(s,t;L^2)} 
& \le \|e(s)\|_{L^2}
+ C_1\tau^\gamma \ln(1/\tau) + C_3(t-s+\tau)^{\frac14} \|e\|_{L^\infty(s,t;L^2)} \\
& \le \|e\|_{L^\infty(0,s;L^2)} 
+ C_1\tau^\gamma \ln(1/\tau) + C_3(t-s+\tau)^{\frac14} \|e\|_{L^\infty(s,t;L^2)}, \quad  0\le s\le t\le t_*+\delta , 
\end{align*}
which implies that 
\begin{align*}
\|e\|_{L^\infty(0,t;L^2)} 
& \le \|e\|_{L^\infty(0,s;L^2)} 
+ C_1\tau^\gamma \ln(1/\tau) + C_3(t-s+\tau)^{\frac14} \|e\|_{L^\infty(0,t;L^2)}, \quad 0\le s\le t\le t_*+\delta . 
\end{align*}

Let $T_0=\frac12(2C_3)^{-4}$, $\tau\in(0,T_0]$, and consider a sequence of intervals $[kT_0,(k+1)T_0]$, $k=0,\dots,m,$ such that $mT_0<T\le (m+1)T_0$. The maximal number of such intervals is bounded, i.e., $m\le T/T_0$, which is independent of the stepsize $\tau$. On every subinterval $[kT_0,(k+1)T_0]$ such that $[kT_0,(k+1)T_0]\cap[0, t_*+\delta] \ne\emptyset$, we have 
\begin{align*}
\|e\|_{L^\infty(0,t;L^2)} 
& \le \|e\|_{L^\infty(0,kT_0;L^2)} 
+ C_1\tau^\gamma \ln(1/\tau) + \frac12 \|e\|_{L^\infty(0,t;L^2)} \quad\forall\, t\in[kT_0,(k+1)T_0]\cap[0, t_*+\delta] , 
\end{align*}
which implies that
\begin{align*}
\|e\|_{L^\infty(0,t;L^2)} & \le 2 \|e\|_{L^\infty(0,kT_0;L^2)} 
+ 2C_1\tau^\gamma \ln(1/\tau)  \quad\forall\, t\in[kT_0,(k+1)T_0]\cap[0, t_*+\delta] . 
\end{align*}
Iterating this inequality for $k=0,1,\dots$, yields that 
\begin{align*}
\|e\|_{L^\infty(0,t_*+\delta;L^2)} & \le 2^{m+1} \|e^0\|_{L^2} 
+ 2^{m+2}C_1\tau^\gamma \ln(1/\tau) = 2^{m+2}C_1\tau^\gamma \ln(1/\tau)  . 
\end{align*}
Since $m$ and $C_1$ are independent of $\tau$, it follows that there exists a positive constant $\tau_0$ such that for $\tau\le \tau_0$ we have 
\begin{align*}
\|e\|_{L^\infty(0,t_*+\delta;L^2)} \le \tau^{\frac{\gamma}{2}} . 
\end{align*}
This contradicts the maximality of $t_*\in(0,T]$ unless $t_*=T$. Therefore, $t_*=T$, $\delta=0$, and  
\begin{align*}
\|e\|_{L^\infty(0,T;L^2)} 
\le 2^{m+2}C_1\tau^\gamma \ln(1/\tau)  . 
\end{align*}
This proves the error estimate in Theorem \ref{main:thm1}.
\qed

\section{Numerical experiments} \label{sec:numerical}

In this section, we present numerical experiments on the convergence of the proposed low-regularity integrator for the KdV equation with $H^\gamma$ initial data, for $\gamma=0.2$, $0.4$, $0.6$ and $0.8$, respectively. The computations are performed by Matlab with double precision. 

We consider the KdV equation
\begin{equation}
\left\{
\begin{array}{lll}
{\displaystyle \partial_{t}u(t,x)+\partial^{3}_{x}u(t,x)=\frac{1}{2}\partial_{x}(u(t,x))^{2}\quad \text{for }x\in\mathbb{T}\text{ and }t>0,} \\[2mm]
{\displaystyle u(0,x)=u^{0}(x)\quad \text{for }x\in\mathbb{T},}
\end{array}
\right.
\end{equation}
with the following initial value: 
\begin{equation}
{\displaystyle u^{0}(x)=\frac{1}{10}\sum_{0\neq k\in\mathbb{Z}}|k|^{-0.51-\gamma}\fe^{ikx}.}
\end{equation}
which is in $H^\gamma(\T)$ but not in $H^{\gamma+0.01}(\T)$. 
We present the errors of the numerical solutions at $T=1$ in Figure~\ref{fig:0-1}, with sufficiently large degrees of freedom (i.e., dof=$2^{12}$) in the spatial discretization by a spectral method. The reference solution is obtained by using the proposed low-regularity integrator with a much smaller time stepsize. From Figure~\ref{fig:0-1} we see that the convergence order of the proposed method with $H^\gamma$ initial value is $\gamma$, for $\gamma=0.2$, $0.4$, $0.6$ and $0.8$, respectively. This is consistent with the theoretical result proved in Theorem \ref{main:thm1}. 

\begin{figure}[h]
\begin{minipage}{0.48\linewidth}\centerline{{\tiny(a)}\includegraphics[width=8cm,height=6cm]{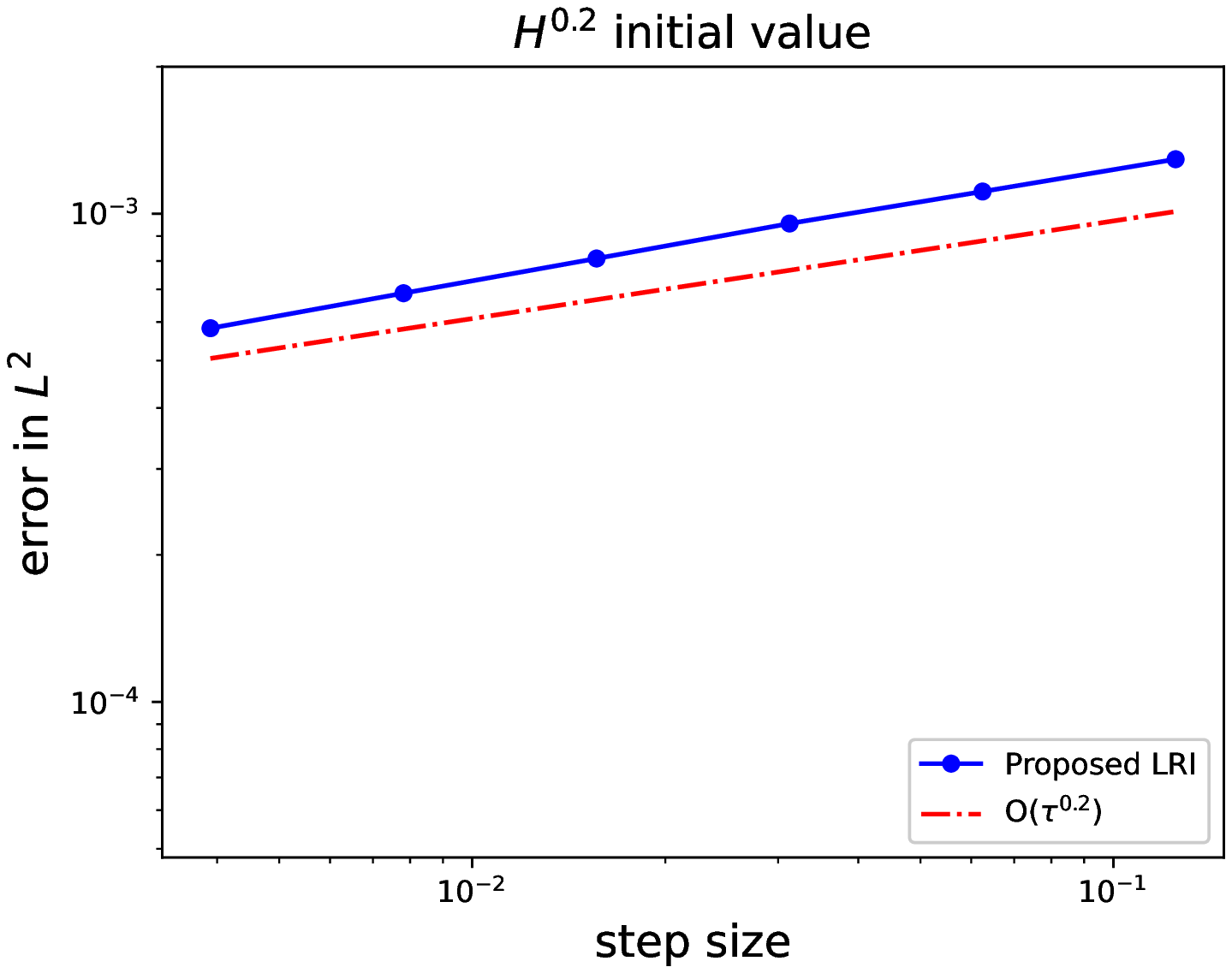}}
\end{minipage}
\hfill
\begin{minipage}{0.48\linewidth}\centerline{{\tiny(b)}\includegraphics[width=8cm,height=6cm]{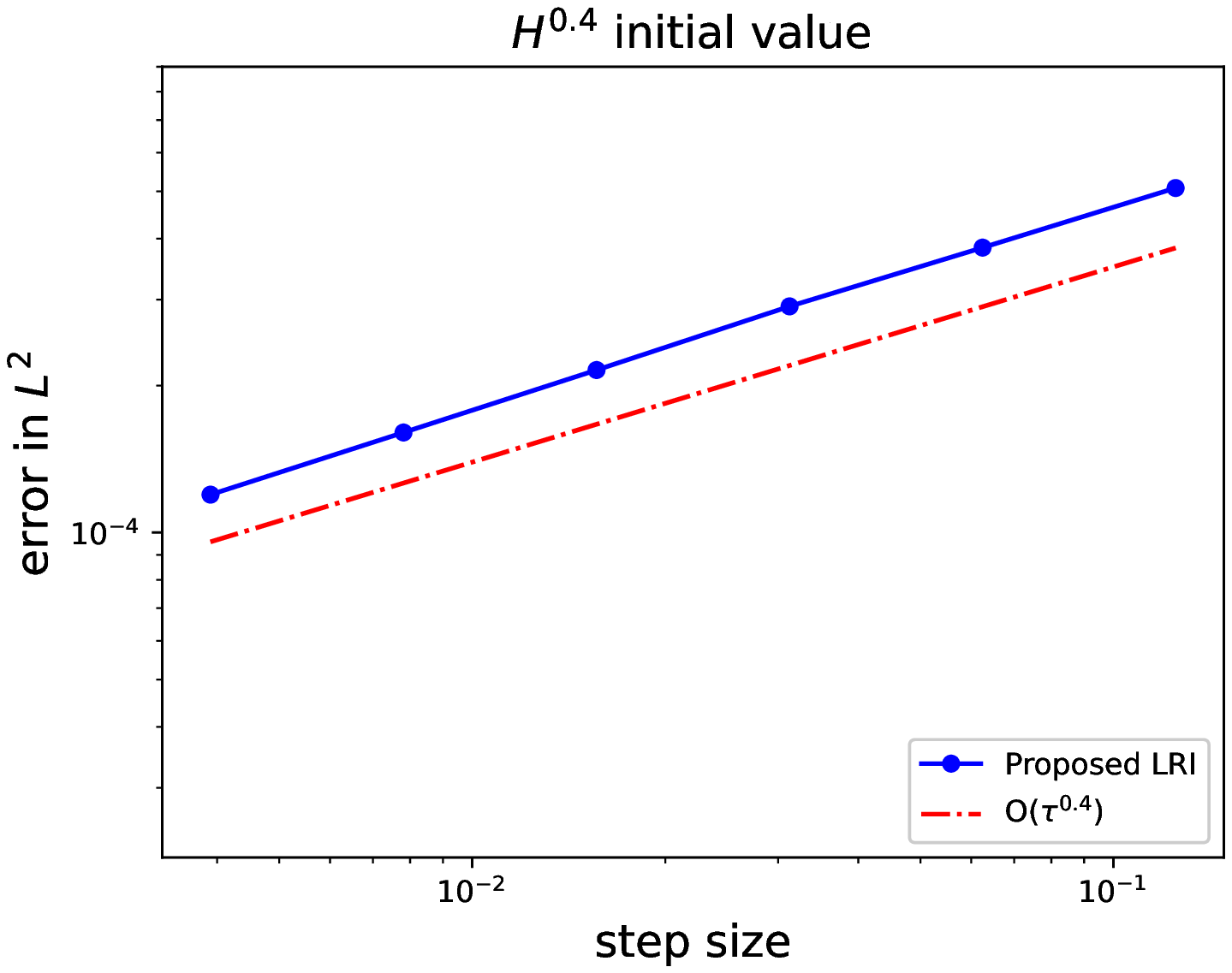}}
\end{minipage}
\vfill
\begin{minipage}{0.48\linewidth}\centerline{{\tiny(c)}\includegraphics[width=8cm,height=6cm]{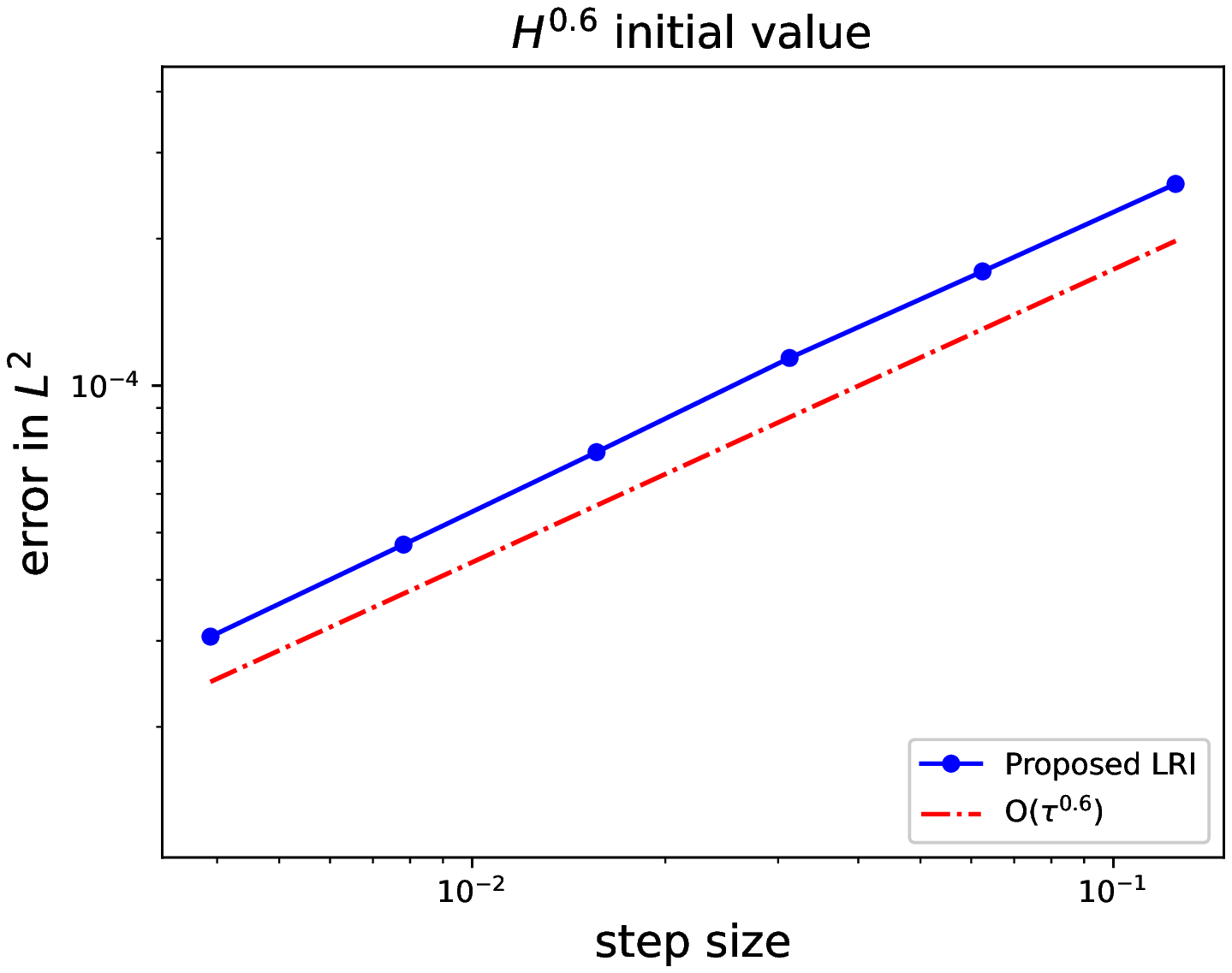}}
\end{minipage}
\hfill
\begin{minipage}{0.48\linewidth}\centerline{{\tiny(d)}\includegraphics[width=8cm,height=6cm]{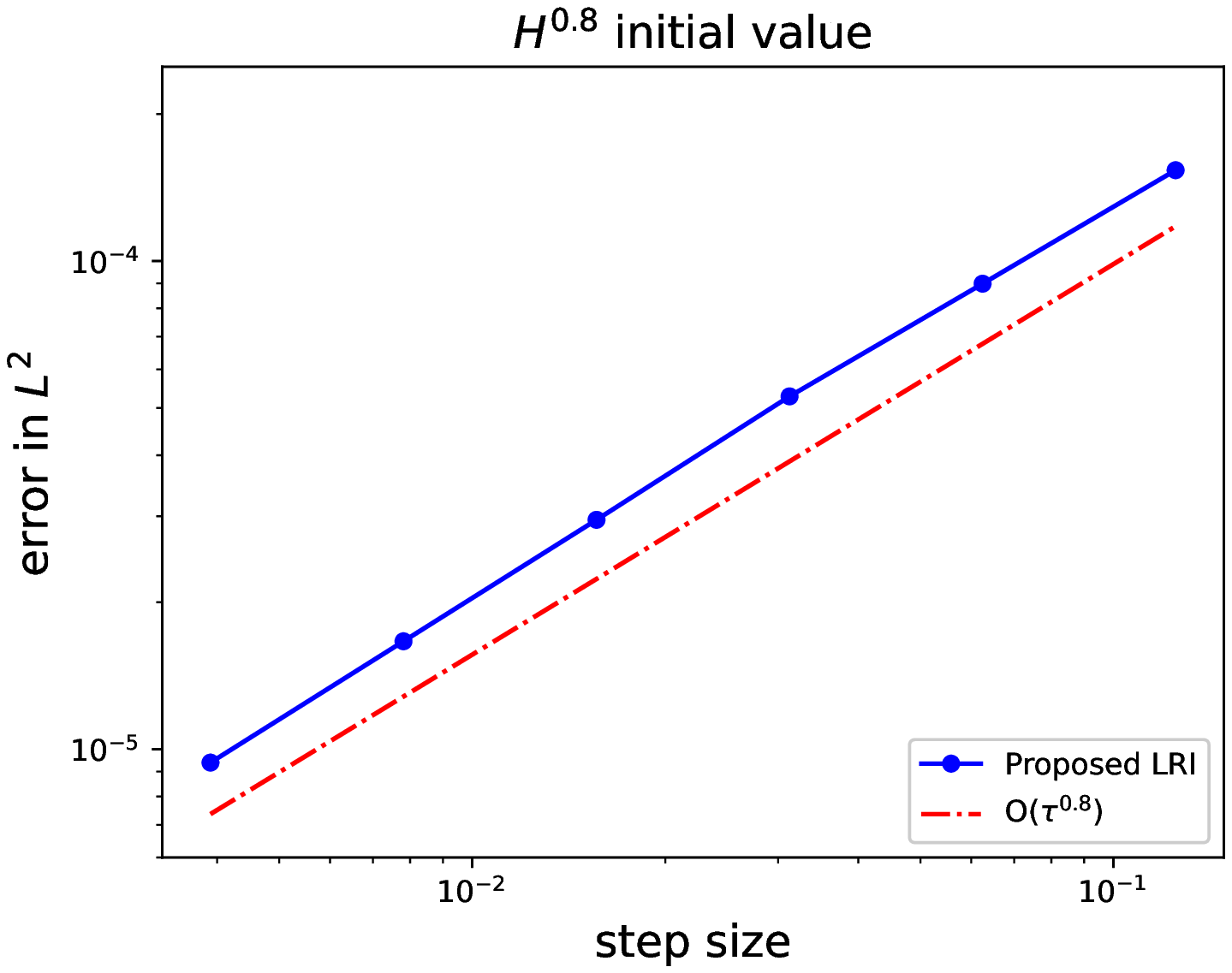}}
\end{minipage}
\caption{$L^2$ errors of the numerical solutions for $H^{\gamma}$ initial data, with $\gamma\in(0,1]$.}
\label{fig:0-1}
\end{figure}

 \vspace{-15pt}

\section{Conclusions} \label{sec:conclusion}

We have presented several new tools for the construction and analysis of low-regularity integrators for the KdV equations, including the averaging approximation technique for exponential functions with imaginary powers (Lemma \ref{lem:average2}), the new estimates for the symbol $\phi=k^3-k_1^3-k_2^3-k_3^3$ (Lemma \ref{lem:a4-est}), and new trilinear estimates associated to the KdV operator (Proposition \ref{lem:tri-linear}). These new techniques have played essential roles in analyzing the local error, global remainder, and the stability estimates. We have also introduced a new technique, which reformulates the numerical scheme into a perturbed integral formulation of the continuous KdV equation globally posed on the time interval $[0,T]$, instead of locally posed on $[t_n,t_{n+1}]$, for analyzing the stability of numerical approximations to solutions below $H^1$. By combining the several new techniques, we have constructed a new time discretization which is convergent with order $\gamma$ in $L^2$ (up to a logarithmic factor) under the regularity condition $u\in C([0,T];H^\gamma)$, with $\gamma\in(0,1]$ possibly approaching zero. The techniques and framework established in this article may also be applied and extended to the construction of unfiltered low-regularity integrators for other nonlinear dispersive equations, including the modified KdV equation, the generalized KdV equation, and the nonlinear Schr\"odinger equation. 

\section*{Acknowledgements}
The work of B. Li is partially supported by the Hong Kong Research Grants Council (General Research Fund, project no. PolyU15300519) and the internal grant at The Hong Kong Polytechnic University (PolyU Project ID: P0031035, Work Programme: ZZKQ). The work of Y. Wu is partially supported by NSFC 12171356.

\bibliographystyle{model1-num-names}

\end{document}